\title[{\tiny Stability conditions on threefolds with nef tangent bundles}]{Stability conditions on threefolds with nef tangent bundles}
\date{}
\author{Naoki Koseki}
\theoremstyle{plain}
\newtheorem{thm}{Theorem}[section]
\newtheorem{prop}[thm]{Proposition}
\newtheorem{lem}[thm]{Lemma}
\newtheorem{cor}[thm]{Corollary}
\newtheorem*{thm*}{Theorem}
\theoremstyle{definition}
\newtheorem{defin}[thm]{Definition}
\newtheorem{conj}[thm]{Conjecture}
\newtheorem*{NaC}{Notation and Convention}
\newtheorem*{ACK}{Acknowledgement}
\theoremstyle{remark}
\newtheorem{rmk}[thm]{Remark}
\DeclareMathOperator{\ch}{ch}
\DeclareMathOperator{\Spec}{Spec}
\DeclareMathOperator{\id}{id}
\newcommand{\dR}{\mathbf{R}}
\newcommand{\dL}{\mathbf{L}}
\newcommand{\bP}{\mathbb{P}}
\newcommand{\mcF}{\mathcal{F}}
\newcommand{\mcG}{\mathcal{G}}
\newcommand{\mcO}{\mathcal{O}}
\DeclareMathOperator{\Hom}{Hom}
\DeclareMathOperator{\Coh}{Coh}
\DeclareMathOperator{\rep}{rep}
\DeclareMathOperator{\Pic}{Pic}
\DeclareMathOperator{\NS}{NS}
\DeclareMathOperator{\ext}{ext}
\DeclareMathOperator{\Ext}{Ext}
\DeclareMathOperator{\Sym}{Sym}
\DeclareMathOperator{\Coker}{Coker}
\begin{document}
\maketitle

\begin{abstract}
In this paper, 
we prove the Bogomolov-Gieseker type inequality conjecture 
for threefolds with nef tangent bundles. 
As a corollary, there exist Bridgeland stability conditions 
on these threefolds. 
\end{abstract}

\setcounter{tocdepth}{1}
\tableofcontents


\section{Introduction}

\subsection{Motivation and results}
The construction of Bridgeland stability conditions 
on an algebraic variety $X$ is an important problem. 
When $X$ is a surface, the existence of Bridgeland 
stability conditions on $X$ is proved by 
Bridgeland (cf. \cite{bri08}) and Arcara-Bertram (cf. \cite{ab13}). 
It has found many applications 
to classical problems in algebraic geometry, 
especially in the study of birational geometry 
of the moduli spaces of Gieseker stable sheaves 
(see e.g. \cite{abch13, bm14c, bm14b, bmw14, ch14, ch15, ch16, chw17, lz16, lz18}). 

When $X$ is a threefold, 
the existence of Bridgeland stability condisions 
is an open problem in general. 
In the paper \cite{bmt14a}, Bayer, Macr{\`i}, and Toda 
reduced the problem to the so-called 
Bogomolov-Gieseker (BG) type inequality conjecture. 
The BG type inequality conjecture is known to be true 
for Abelian threefolds (cf. \cite{bms16, mp16a, mp16b}), 
Fano threefolds of Picard rank one (cf. \cite{li15}), 
some toric threefolds (cf. \cite{bmsz17}), 
product threefolds of projective spaces 
and Abelian varieties (cf. \cite{kos17}), 
and quintic threefolds (cf. \cite{li18}). 
However, counter-examples of 
the original BG type inequality conjecture are 
constructed (see e.g. \cite{ms19}). 
The failure of the conjecture is related to 
the existence of a kind of negative effective 
divisors on a threefold 
(see Lemma \ref{neg lem}). 
The modification of the conjecture is discussed 
in the paper \cite{bmsz17}, 
and they prove that the modified version of 
the BG type inequality holds 
when $X$ is a Fano threefold of arbitrary Picard rank. 

On the other hand, we can still expect that 
the original BG type inequality conjecture 
will be true if every effective divisor on $X$ 
satisfies a certain positivity condition, e.g. 
if the pseudo-effective cone agrees with the nef cone. 
Actually, in this paper, 
we prove that the original conjecture is true 
for one class of threefolds satisfying this property, 
namely those with nef tangent bundles: 

\begin{thm}
\label{main thm}
Let $X$ be a smooth projective threefold with 
nef tangent bundle. 
Then the original BG type inequality conjecture 
holds for $X$. 
\end{thm}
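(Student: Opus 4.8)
The plan is to combine the classification of threefolds with nef tangent bundle with the cases of the conjecture that are already known, gluing them together by an \'etale-descent argument for tilt-stability. First I would invoke the Campana--Peternell structure theory: a smooth projective threefold $X$ with nef tangent bundle is, after a suitable finite \'etale cover $\pi \colon \widetilde X \to X$, one of a short list --- an abelian threefold; a rational homogeneous (hence Fano) threefold, namely $\mathbb{P}^3$, the quadric $Q^3$, $\mathbb{P}^1 \times \mathbb{P}^2$, $(\mathbb{P}^1)^3$, or the flag variety $\mathbb{P}(T_{\mathbb{P}^2})$; or a projectivized bundle $p \colon \mathbb{P}(\mathcal{E}) \to A$ over an abelian variety $A$ with $1 \le \dim A \le 2$.

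The glue is a descent lemma for the conjecture along a finite \'etale morphism $\pi \colon \widetilde X \to X$. Writing the conjectural statement as a quadratic inequality $Q_{\omega,B}(E) \ge 0$ in the Chern characters $\ch(E)$ of tilt-semistable objects, I would use that $\pi^{*}$ is exact, preserves the tilted heart attached to the pulled-back polarization, and preserves tilt-(semi)stability, together with the relation $\int_{\widetilde X}\pi^{*}(\,\cdot\,)=\deg(\pi)\int_{X}(\,\cdot\,)$. Since $\ch(\pi^{*}E)=\pi^{*}\ch(E)$, each integral defining $Q$ scales by $\deg(\pi)$, so $Q_{\pi^{*}\omega,\pi^{*}B}(\pi^{*}E)=\deg(\pi)\,Q_{\omega,B}(E)$; hence validity of the inequality on $\widetilde X$ forces it on $X$. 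This reduces Theorem \ref{main thm} to verifying the conjecture for the members of the list. The abelian case is then \cite{bms16, mp16a, mp16b}.

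For the Fano members I would use that nef tangent bundle forces the pseudo-effective and nef cones of $X$ to coincide (classical for these homogeneous spaces), so that the negative-divisor obstruction of Lemma \ref{neg lem} is absent and the modified inequality of \cite{bmsz17} coincides with the original one. As \cite{bmsz17} proves the modified inequality for every Fano threefold, this settles $\mathbb{P}^3$, $Q^3$, $\mathbb{P}^1\times\mathbb{P}^2$, $(\mathbb{P}^1)^3$, and $\mathbb{P}(T_{\mathbb{P}^2})$ at once; note that the last of these is simply connected, hence admits no nontrivial \'etale cover and cannot be reduced to a product, so the identification of cones is genuinely needed there. The remaining, and main, case is the honest non-product bundle $p \colon \mathbb{P}(\mathcal{E}) \to A$, which is neither Fano nor a product (even after passing to an \'etale cover the extension defining $\mathcal{E}$ need not split), so that neither \cite{kos17} nor \cite{bmsz17} applies. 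Here I would exploit the fibration directly: choose polarizations of the form $\omega = a\,p^{*}\theta + b\,\xi$ with $\theta$ ample on $A$ and $\xi$ the relative hyperplane class, and analyze tilt-stability in the degenerate regimes $a \gg b$ and $b \gg a$, where tilt-semistable objects are controlled by the abelian base and by the projective-space fibers respectively, using the positivity supplied by nefness of $T_X$ to bound $\ch_3$.

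I expect the crux to be precisely this bundle case. Unlike the product situation of \cite{kos17}, there is no global splitting to reduce to lower dimension, and unlike the Picard-rank-one setting of \cite{li15} the ample cone is genuinely multi-dimensional; controlling the $\ch_3$ term of tilt-semistable objects across the whole cone via the fibration over the abelian base, and matching the two degenerate regimes, is where the real work lies. A secondary technical point, needed to make the whole classification-based strategy run, is the descent lemma itself: while pullback of a slope-semistable sheaf along an \'etale map is classically semistable, tilt-stability is a derived notion defined through a tilt of $\Coh(X)$, so verifying that $\pi^{*}$ preserves the relevant heart and its semistable objects requires a careful, if routine, argument.
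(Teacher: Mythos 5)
Your overall skeleton agrees with the paper: reduce via the Campana--Peternell classification (Theorem \ref{cp}) to a finite list, descend the inequality along a finite \'etale cover, and quote the known cases ($\mathbb{P}^3$, $Q^3$, the toric products, abelian threefolds). Two remarks on that part. The descent step is exactly \cite[Proposition 6.1]{bms16}, which the paper simply cites; note that the argument there needs the cover to be Galois, which is why the paper inserts Lemma \ref{gal} --- your ``routine'' descent is fine in spirit but this is the point to be careful about. More seriously, your treatment of the Fano members is not a proof: the fact that nefness of $\mathcal{T}_X$ kills the negative divisors of Lemma \ref{neg lem} removes one known obstruction to Conjecture \ref{original}, but it does not make the \emph{modified} inequality of \cite{bmsz17} ``coincide with the original one.'' In fact \cite{bmsz17} proves the original conjecture for $\mathbb{P}^1\times\mathbb{P}^2$ and $(\mathbb{P}^1)^3$ directly, while for $\mathbb{P}(\mathcal{T}_{\mathbb{P}^2})$ it only covers $B,\omega$ proportional to $-K_X$; this is precisely why the paper spends all of Section \ref{remaining} on that variety, building a stability condition from an Ext-exceptional collection to reach arbitrary $H=ah_1+bh_2$ (Theorem \ref{second thm}), and even then only for $B$ proportional to $\omega$.

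The genuine gap is in the crux you correctly identified, the bundles $\mathbb{P}(\mathcal{E})$ over an abelian surface or elliptic curve (cases (6)--(8)). Your plan --- take $\omega=a\,p^*\theta+b\,\xi$ and analyze the degenerate regimes $a\gg b$ and $b\gg a$ --- supplies no mechanism for the interior of the ample cone: the reduction theorems available (e.g.\ Theorem \ref{li red}) go from small $\omega$ to large $\omega$ along a ray, not from the boundary regimes to the whole cone, and tilt-semistable objects in the two limits do not control those for a general polarization. The paper instead uses two ideas absent from your proposal. First, Proposition \ref{degenerate}: since $\mathcal{E}$ is an iterated extension of degree-zero line bundles, one degenerates the extension class to $0$ in a family over $\mathbb{A}^1$ and invokes the openness/properness result of \cite{bay} (Proposition \ref{family}) to reduce Conjecture \ref{bms} to the \emph{split} bundle --- no \'etale cover is needed to split $\mathcal{E}$, a deformation suffices. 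Second, even the split bundle $\mathbb{P}_A(\mathcal{O}\oplus L)$ with $L\in\Pic^0(A)$ nontrivial is not a product, so \cite{kos17} does not apply directly; the paper constructs finite covers $F_m\colon X^{(1/m)}\to X$ (Proposition \ref{endo}), combining multiplication by $m$ on the abelian base with a toric Frobenius on the fibers, acting on $H^{2*}$ by $(x,y,z,w)\mapsto(x,m^2y,m^4z,m^6w)$, and runs the asymptotic Riemann--Roch argument $m^6q^6\ch_3^{\overline B}(E)+O(m^4)=\chi\leq\hom+\ext^2=O(m^4)$, with the $O(m^4)$ bound coming from the splitting of $g_{m*}$ of line bundles (Proposition \ref{toric split}) and tilt-stability of line bundles. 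Without these two ingredients the main new case of the theorem remains unproved.
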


In particular, the above theorem implies 
the existence of Bridgeland stability conditions 
on these threefolds: 

\begin{thm}
Let $X$ be as in Theorem \ref{main thm}. 
Then there exist Bridgeland stability conditions on $X$. 
\end{thm}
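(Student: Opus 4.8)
The plan is to apply the construction of Bayer, Macr{\`i}, and Toda verbatim, using Theorem \ref{main thm} to supply its single unverified hypothesis. First I would recall their setup: fixing a pair $(\omega, B) \in \Amp(X)_\mathbb{R} \times \NS(X)_\mathbb{R}$, one builds a heart $\mathcal{A}_{\omega, B} \subset D^b(\Coh X)$ by a double tilt of $\Coh(X)$ --- tilting first with respect to ordinary $\omega$-slope stability to obtain an intermediate heart $\mathcal{B}_{\omega, B}$, then tilting $\mathcal{B}_{\omega, B}$ with respect to the induced tilt-slope $\nu_{\omega, B}$ --- and equips it with a central charge $Z_{\omega, B}$ assembled from the $B$-twisted Chern character $\ch^B = e^{-B}\ch$.

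The key structural fact, established in \cite{bmt14a}, is that the pair $(\mathcal{A}_{\omega, B}, Z_{\omega, B})$ satisfies every axiom of a Bridgeland stability condition \emph{except possibly} the positivity of $Z_{\omega, B}$ on the simple objects of $\mathcal{A}_{\omega, B}$, together with the support property. Both of these are controlled by a Bogomolov-Gieseker type inequality for those tilt-stable objects whose reduced charge lies on the real axis. So the second step is simply to invoke Theorem \ref{main thm}: since $X$ carries a nef tangent bundle, the original BG type inequality conjecture holds for $X$, which is exactly the missing input.

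Feeding this inequality into the framework of \cite{bmt14a} --- with the support property formulated as in \cite{bms16} --- yields the required positivity and support property for $Z_{\omega, B}$ on $\mathcal{A}_{\omega, B}$. I would then conclude that each $(\mathcal{A}_{\omega, B}, Z_{\omega, B})$ is a genuine Bridgeland stability condition, and that letting $(\omega, B)$ vary produces a continuous family of them on $D^b(\Coh X)$.

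The hard part of the whole argument was proving the BG inequality, namely Theorem \ref{main thm}; granting that, the present statement is a formal consequence of the BMT machinery. The only remaining checks --- local finiteness and noetherianity of the heart, continuity of $(\omega, B) \mapsto Z_{\omega, B}$, and the deformation argument assembling the pointwise stability conditions into a family --- are routine and already carried out in the cited references.
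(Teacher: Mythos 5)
Your proposal correctly describes the formal implication ``BG type inequality $\Rightarrow$ existence of stability conditions'' via the double-tilted heart $\mathcal{A}_{\omega,B}$ and the central charge $Z_{\omega,B}$; this is indeed the content of the proposition quoted from \cite{pt15} (Conjecture \ref{original} implies Conjecture \ref{bmt}), and it is how the introduction frames Theorem \ref{main thm} as implying the statement. However, as a proof of the precise result (Corollary \ref{first cor}) your argument has a genuine gap: it takes Theorem \ref{main thm} at face value as asserting the BG inequality on $X$ \emph{itself}, for the chosen $(\omega,B)$, whereas the paper never establishes that directly for an arbitrary threefold with nef tangent bundle. What is actually proven is the inequality for the \emph{model} threefolds in the Campana--Peternell classification (Theorem \ref{cp}): cases (1)--(4) and (9) by earlier work, cases (6)--(8) by Theorem \ref{first thm}, and case (5) only for $B$ proportional to $\omega$ (Theorem \ref{second thm}). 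A general $X$ with nef tangent bundle is only an \'etale quotient of one of these.

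The missing idea is therefore the covering/descent step, which is the entire content of the paper's proof of Corollary \ref{first cor}: one first upgrades the \'etale covering of Theorem \ref{cp} to a \emph{Galois} covering $\widetilde{X}\to X$ (Lemma \ref{gal}, which requires passing to the Galois closure of the covering of the Albanese-type base and checking that the base change is still one of the listed threefolds), and then invokes \cite[Proposition 6.1]{bms16} to descend the conjecture, hence the stability conditions, from $\widetilde{X}$ to $X$. Without this step your argument only applies to the threefolds on the list, not to all $X$ as in Theorem \ref{main thm}. The rest of your outline (noetherianity, HN property, support property, continuity in $(\omega,B)$) is correctly delegated to \cite{bmt14a, bms16, pt15}.
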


See Theorem \ref{first thm}, Corollary \ref{first cor} 
and Theorem \ref{second thm} 
for the precise statements.


\subsection{Relation to existing works} 
First recall that threefolds with nef tangent bundles 
are classified by F. Campana and T. Peternell. 

\begin{thm}[\cite{cp91}]
\label{cp intro}
Let $X$ be a smooth projective threefold 
with nef tangent bundle. 
Then up to taking finite {\'e}tale coverings, 
$X$ is one of the following: 
\begin{enumerate}
\item $\mathbb{P}^3$. 
\item a three dimensional smooth quadric. 
\item $\mathbb{P}^1 \times \mathbb{P}^2$. 
\item $\mathbb{P}^1 \times \mathbb{P}^1 \times \mathbb{P}^1$. 
\item $\mathbb{P}({\mathcal{T}_{\mathbb{P}^2}})$. 
\item $\mathbb{P}_{A}(\mathcal{E})$, 
where $A$ is an Abelian surface and 
$\mathcal{E}$ is a rank two vector bundle 
obtained as an extension of two line bundles 
in $\Pic^0(A)$. 
\item $\mathbb{P}_{C}(\mathcal{E})$, 
where $C$ is an elliptic curve and 
$\mathcal{E}$ is a rank three vector bundle 
obtained as extensions of three line bundles 
of degree zero. 
\item $\mathbb{P}_{C}(\mathcal{E}_{1}) \times_{C} \mathbb{P}_{C}(\mathcal{E}_{2})$, 
where $C$ is an elliptic curve and 
$\mathcal{E}_{i}$ are rank two vector bundles 
obtained as extensions of degree zero line bundles. 
\item an Abelian threefold. 
\end{enumerate}
\end{thm}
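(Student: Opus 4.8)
The plan is to analyze $X$ through its Albanese morphism, reduce the classification to a fibration with Fano fibers over an abelian variety, and then let a dimension count cut the possibilities down to the finite list above. The single most important input is that nef $T_X$ is a very strong positivity hypothesis: it should force all of the morphisms produced by Mori theory and by the Albanese construction to be as simple as possible, namely smooth fibrations whose fibers are of homogeneous type. So the guiding principle throughout is that whenever a morphism out of $X$ has a ``bad'' fiber or an exceptional locus, one finds a curve along which $T_X$ acquires a negative quotient, contradicting nefness.

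First I would establish the structural reduction. Since $T_X$ is nef we have that $-K_X = \det T_X$ is nef, and, after replacing $X$ by a suitable finite {\'e}tale cover, I claim the Albanese map $a\colon X \to A := \mathrm{Alb}(X)$ is a smooth surjective morphism onto an abelian variety $A$ with $\dim A = h^1(\mathcal{O}_X)$, whose fibers $F$ are again Fano manifolds carrying nef tangent bundle. The heart of this step is to rule out fiber degeneration and to identify the base of the maximal rationally connected fibration with $A$; this should follow from semipositivity of $T_X$, which restricts to the fibers and controls the horizontal directions, forcing $a$ to be a smooth fiber bundle with rationally connected, hence Fano, fibers.

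Next, using $\dim A + \dim F = 3$, I would run the case analysis on $\dim A$. If $\dim A = 3$ then $F$ is a point and $a$ is {\'e}tale, so $X$ is an abelian threefold, giving (9). If $\dim A = 2$ then $F = \mathbb{P}^1$ is the only Fano curve, so $a$ is a $\mathbb{P}^1$-bundle $\mathbb{P}_A(\mathcal{E})$; testing $c_1(\mathcal{E})$ against the nef condition on $T_X$ should force $\mathcal{E}$, after normalization and a further {\'e}tale cover, to be an extension of line bundles in $\mathrm{Pic}^0(A)$, yielding (6). If $\dim A = 1$ then $A = C$ is an elliptic curve and $F$ is a Fano surface with nef tangent bundle; such surfaces are exactly $\mathbb{P}^2$ and $\mathbb{P}^1 \times \mathbb{P}^1$, since any del Pezzo carrying a $(-1)$-curve is excluded (that curve has a negative normal quotient of $T_F$). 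These produce the two relative constructions (7) and (8), with the same degree-zero analysis of the defining bundles.

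The remaining and hardest case is $\dim A = 0$, that is, $X$ a genuine Fano threefold with nef tangent bundle, which should give exactly (1)--(5). Here I would invoke Mori theory, and the crucial claim is that for nef $T_X$ every elementary extremal contraction $\phi\colon X \to Y$ is a smooth morphism whose fibers are projective spaces or quadrics; this excludes all of Mori's birational (divisorial and small) contraction types in dimension three, because the exceptional locus would again carry a curve violating nefness of $T_X$. Granting this, I would induct on the Picard number $\rho(X)$, analyzing the admissible smooth contractions together with the low-dimensional classification to recover $\mathbb{P}^3$, the quadric, $\mathbb{P}^1 \times \mathbb{P}^2$, $\mathbb{P}^1 \times \mathbb{P}^1 \times \mathbb{P}^1$, and $\mathbb{P}(\mathcal{T}_{\mathbb{P}^2})$. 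I expect the main obstacle to be precisely this smoothness-of-contractions statement, combined with the bookkeeping that no further {\'e}tale cover intrudes in the Fano case; proving it rigorously requires a careful case-by-case use of Mori's explicit description of extremal rays on threefolds and the semipositivity of $T_X$ along the contracted curves.
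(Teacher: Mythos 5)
The first thing to note is that the paper does not prove this statement at all: Theorem \ref{cp intro} is quoted from Campana--Peternell \cite{cp91} (it reappears as Theorem \ref{cp}, the only original content nearby being Lemma \ref{gal}, which upgrades the \'etale cover to a Galois one). So there is no in-paper argument to compare yours against; the relevant benchmark is the proof in \cite{cp91} itself.

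Measured against that, your sketch correctly reproduces the global strategy of Campana--Peternell --- reduce via the Albanese to a smooth fibration with Fano fibers over an abelian variety, then classify the Fano case by Mori theory --- but the two claims you yourself flag as the ``heart'' and the ``crucial claim'' are precisely the substance of \cite{cp91}, and you assert rather than prove them. Concretely: (i) that after a finite \'etale cover the Albanese map is a smooth surjective fibration with connected Fano fibers of nef tangent bundle is their main structure theorem; smoothness does not follow formally from ``semipositivity of $T_X$ restricted to fibers,'' and the step ``rationally connected, hence Fano'' is false as a general implication --- in dimension $\leq 3$ it must be verified for the specific fibers that occur. (ii) That every extremal contraction of a Fano threefold with nef $T_X$ is a smooth morphism with $\mathbb{P}^d$ or quadric fibers requires running through Mori's explicit list of extremal rays on threefolds and excluding each divisorial and small type by exhibiting a curve on which $T_X$ has a negative quotient; this is a genuine case-by-case analysis, not a one-line consequence of nefness. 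There are also smaller elisions: a $\mathbb{P}^1$-fibration is a projectivized bundle only after killing a Brauer obstruction; a $\mathbb{P}^1\times\mathbb{P}^1$-bundle over $C$ need not split as a fiber product before a further double cover (permissible here since the statement allows \'etale covers, but it should be said); and the assertion that the defining bundles are iterated extensions of degree-zero line bundles requires establishing numerical flatness of $\mathcal{E}$ (semistability, $c_1$ numerically trivial after normalization, $c_2=0$), which is where nefness of the relative tangent bundle actually does its work. As an outline of \cite{cp91} your proposal is faithful; as a proof it defers exactly the steps that make the theorem a theorem.
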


Among the above threefolds, 
the existence of Bridgeland stability conditions is known 
in the following cases: 

\begin{itemize}
\item $\mathbb{P}^3$ by \cite{bmt14a, mac14}. 
\item a three dimensional smooth quadric by \cite{sch14}. 
\item (3) -- (5) in Theorem \ref{cp intro} by \cite{bmsz17}. 
\item an Abelian threefold by \cite{bms16, mp16a, mp16b}. 
\end{itemize}

In this paper, we treat the remaining cases, i.e. 
(6) -- (8) in Theorem \ref{cp intro}. 
Note that $\mathbb{P}^1 \times A$, 
$\mathbb{P}^2 \times C$, and  
$\mathbb{P}^1 \times \mathbb{P}^1 \times C$ 
are treated in the author's previous paper \cite{kos17}, 
which are the special cases of (6) -- (8) in Theorem \ref{cp intro}.  

Furthermore, on $\mathbb{P}(\mathcal{T}_{\mathbb{P}^2})$, 
we will construct new Bridgeland stability conditions 
which were not obtained in \cite{bmsz17}.


\subsection{Outline of the proof}
As mentioned in the last subsection, 
we treat the cases (6) -- (8) in Theorem \ref{cp intro} 
in the first part of this paper. 
Recall that, if the bundle is a trivial bundle, 
then the BG type inequality conjecture 
is known to be true by the author's previous paper \cite{kos17}. 
In the trivial bundle case, 
the existence of good endomorphisms is crucial 
in the proof. 

When the bundle is non-trivial, 
we don't know 
the existence of the endomorphisms in general. 
In such cases, 
we use the technique developed by Bayer et al (\cite{bay}), 
which we now explain: 
Consider a smooth family 
$\mathcal{X} \to \mathbb{A}^1$ 
of threefolds over 
the affine line $\mathbb{A}^1$.
Assume that for every points 
$t, t' \in \mathbb{A}^1 \setminus \{0\}$, 
we have 
$\mathcal{X}_{t} \cong \mathcal{X}_{t'}=:X$. 
Then according to \cite{bay}, 
the BG type inequality conjecture for $X$ 
is reduced to that of $\mathcal{X}_{0}$. 
In our situation, 
using this technique, 
we can reduce to the cases 
of the projectivizations of split vector bundles 
(see Proposition \ref{degenerate}). 
Then for split cases, we can argue as in \cite{kos17} 
using good finite morphisms. 

In the second part, 
we will treat the case when $X=\mathbb{P}(\mathcal{T}_{\mathbb{P}^2})$. 
In \cite{bmsz17}, they used the fact that 
$\mathbb{P}(\mathcal{T}_{\mathbb{P}^2})$ 
is a Fano variety 
to construct Bridgeland stability conditions. 
On the other hand, in this paper, 
we regard it as a $\mathbb{P}^1$-bundle 
over $\mathbb{P}^2$ and 
use a full exceptional collection on 
the derived category.


\subsection{Open problems}
\begin{enumerate}
\item As we will see in Conjecture \ref{bms}, 
the conjectural BG type inequality depends on 
a class $B+i\omega \in \NS(X)_{\mathbb{C}}$ 
with $\omega$ ample. 
For threefolds in Theorem \ref{cp intro}, 
except for (5), 
we can prove the inequalty for any choice 
of a class $B+i\omega \in \NS(X)_{\mathbb{C}}$ 
with $\omega$ ample. 

On the other hand, for $\mathbb{P}(\mathcal{T}_{\mathbb{P}^2})$, 
we can prove it only when $B$ and $\omega$ are 
proportional so far. 
We can hope the inequality also holds for any choice 
of $B+i\omega \in \NS\left(
\mathbb{P}(\mathcal{T}_{\mathbb{P}^2})
\right)_{\mathbb{C}}$. 
At this moment, the author doesn't know 
how to solve this problem. 

\item It is expected that 
the space of Bridgeland stability conditions 
has complex dimension equal to 
the rank of the algebraic cohomology 
(In fact, it is true in the surface case 
by the works \cite{ab13, bri08, tod13b}). 
As proven in the paper \cite{bms16}, 
the BG type inequality in 
Conjecture \ref{bms} implies 
the existence of a four dimensional subset 
in the space of Bridgeland stability conditions. 

In \cite[Theorem 3.21]{opt18}, 
the full dimensional family of 
Bridgeland stability conditions on Abelian threefolds
was constructed. 
Proving the same statement for threefolds treated in this paper 
is an interesting open problem, 
which requires the stronger BG type inequality. 

\end{enumerate}


\subsection{Plan of the paper}
In Section \ref{prelim}, we recall about 
the theory of Bridgeland stability conditions 
and about threefolds 
with nef tangent bundles. 
In Section \ref{deform and endo} 
we treat varieties in Theorem \ref{cp intro} (6) -- (8). 
In Section \ref{remaining}, 
we will discuss about the stability conditions 
on $\mathbb{P}(\mathcal{T}_{\mathbb{P}^2})$. 


\begin{ACK}
I would like to thank Professor Arend Bayer and 
my supervisor Professor Yukinobu Toda 
for valuable discussions 
and for careful reading of 
the previous version of this article. 
I would also like to thank 
Yohsuke Matsuzawa and Akihiro Kanemitsu 
who taught me about the papers \cite{cp91, nak02}. 

This paper was written 
while I was visiting 
the University of Edinburgh 
from March 2018 to August 2018. 
This visiting was supported 
by the JSPS program 
``Overseas Challenge Program for Young Researchers''. 
This work was supported by the program 
for Leading Graduate Schools, MEXT, Japan, 
and by Grant-in-Aid for JSPS Research Fellow 17J00664. 

Finally, I would like to thank the referee 
for careful reading of this paper 
and giving me various suggestions. 
\end{ACK}

\begin{NaC}
In this paper we always work over $\mathbb{C}$. 
We use the following notations: 
\begin{itemize}
\item $\ch^{B}=(\ch_{0}^{B}, \cdots, \ch_{n}^{B}):=e^{-B}.\ch$, 
where $\ch$ denotes the Chern character 
and $B \in \NS(X)_{\mathbb{R}}$. 
\item $v^B:=\omega.\ch^{B}
:=(\omega^n.\ch^{B}_{0}, \cdots, \omega.\ch^{B}_{n-1}, \ch^{B}_{n})$, 
where $B, \omega \in \NS(X)_{\mathbb{R}}$.  
\item $K(\mathcal{A})$ : 
the Grothendieck group of an abelian category $\mathcal{A}$. 
\item $\hom(E, F):=\dim\Hom(E, F)$. 
\item $\ext^i(E, F):=\dim\Ext^i(E, F)$. 
\item $D^b(X):=D^b(\Coh(X))$ : 
the bounded derived category of coherent sheaves 
on a smooth projective variety $X$. 
\end{itemize}
\end{NaC}


\section{Preliminaries}
\label{prelim}

\subsection{Bridgeland stability condition}
In this subsection, we recall the notion of 
Bridgeland stability conditions 
on a triangulated category. 
The reference for this subsection 
is Bridgeland's original paper \cite{bri07}. 
First, we define the notion of stability functions: 

\begin{defin}
Let $\mathcal{A}$ be an Abelian category. 
\begin{enumerate}
\item A {\it stability function} on $\mathcal{A}$ 
is a group homomorphism 
$Z \colon K(\mathcal{A}) \to \mathbb{C}$ 
satisfying the condition 
\[
Z (\mathcal{A} \setminus \{0\}) 
\subset \mathcal{H} \cup \mathbb{R}_{<0}, 
\]
where $\mathcal{H}$ is the upper half plane.

\item Let $Z$ be a stability function on $\mathcal{A}$. 
An object $E \in \mathcal{A}$ is called 
$Z$-{\it stable (resp. semistable)} if 
for every non zero proper subobject 
$0 \neq F \subset E$, we have an inequality 
\[
-\frac{\Re Z(F)}{\Im Z(F)} 
< (\mbox{resp. } \leq ) 
-\frac{\Re Z(E)}{\Im Z(E)}. 
\]
Here, we define 
$-\frac{\Re Z(E)}{\Im Z(E)}:=+ \infty$ 
if $\Im Z(E)=0$. 

\item A stability function $Z$ 
on $\mathcal{A}$ satisfies the 
{\it Harder-Narasimhan (HN) property} 
if the following holds: 
for every object 
$E \in\mathcal{A}$, 
there exists a filtration 
\[
0=E_{0} \subset E_{1} \subset \cdots \subset E_{m-1} \subset E_{m}=E 
\]
such that 
$F_{i}:=E_{i}/E_{i-1}$ are $Z$-semistable and 
\[
-\frac{\Re Z(F_{1})}{\Im Z(F_{1})} 
> \cdots > 
-\frac{\Re Z(F_{m})}{\Im Z(F_{m})}. 
\]
\end{enumerate}
\end{defin}

We now define the notion of 
stability conditions on a triangulated category: 

\begin{defin}
Let $\mathcal{D}$ be a triangulated category. 
A {\it stability condition} on $\mathcal{D}$ 
is a pair consisting of the heart $\mathcal{A}$ 
of a bounded t-structure on $\mathcal{D}$ and 
a stability function $Z$ on $\mathcal{A}$ 
satisfying the HN property. 
A stability function $Z$ is called 
a {\it central charge}. 
\end{defin}


\subsection{Bogomolov-Gieseker type inequality conjecture}
In this subsection, we recall the conjectural approach 
for the construction of stability conditions on threefolds. 
Let $X$ be a smooth projective threefold. 
Fix a class 
$B+i\omega \in \NS(X)_{\mathbb{C}}$ 
with $\omega$ ample. 
Conjecturally, there exists a stability condition 
on $D^b(X)$ with its central charge given as follows 
(cf. \cite[Conjecture 2.1.2]{bmt14a}): 
\[
Z_{\omega, B}:=
- \int_{X} e^{-i\omega}.\ch^{B}. 
\]

It is easy to see that the pair 
$(Z_{\omega, B}, \Coh(X))$ 
does not define a stability condition 
when $X$ is a threefold. 
Hence we need to introduce new hearts.
Our hearts are obtained by 
the double-tilting construction \cite{bmt14a} 
which we explain below, 
see the paper \cite{hrs96} for the general theory 
of torsion pairs and tilting. 
In the following, we assume that 
$B \in \NS(X)_{\mathbb{Q}}$ 
and 
$\omega=mH$ 
for some ample divisor $H$ 
and $m \in \mathbb{R}_{>0}$ 
with $m^2 \in \mathbb{Q}$. 
As in the introduction, 
we use the following notation: 
\[
v^B=(v^B_{0}, v^B_{1}, v^B_{2}, v^B_{3})
:=(\omega^3.\ch^{B}_{0}, \omega^2.\ch^B_{1}, \omega.\ch^{B}_{2}, \ch^{B}_{3}). 
\]

{\bf First tilting: }
We define the slope function on 
$\Coh(X)$ as follows: 
\[
\mu_{\omega, B}:= \frac{v^B_{1}}{v^B_{0}}
\colon \Coh(X) \to (-\infty, +\infty]. 
\]

Then define the full subcategories 
$\mathcal{T}_{\omega, B}, 
\mathcal{F}_{\omega, B} \subset \Coh(X)$ 
as follows: 
\begin{align*}
&\mathcal{T}_{\omega, B}
:= \left\langle T \in \Coh(X): 
T \mbox{ is }\mu_{\omega, B} 
\mbox{-semistable with } 
\mu_{\omega, B}(T)>0 
\right\rangle, \\ 
&\mathcal{F}_{\omega, B}
:=\left\langle F \in \Coh(X): 
F \mbox{ is } \mu_{\omega, B} 
\mbox{-semistable  with }
\mu_{\omega, B}(F) \leq 0 
\right\rangle. 
\end{align*}
Here, $\mu_{\omega, B}$-stability for 
coherent sheaves is defined in a standard manner, 
and we denote by $\langle S \rangle$ 
the extension closure of a set of objects 
$S \subset \Coh(X)$. 
Now we define a new heart, 
called tilted heart by 
\[
\Coh^{\omega, B}(X):=\left\langle
\mathcal{F}_{\omega, B}[1], \mathcal{T}_{\omega, B}
\right\rangle. 
\]

{\bf Second tilting: }
As in the first tilting, 
we introduce a new slope function and tilting of 
$\Coh^{\omega, B}(X)$: 
A slope function $\nu_{\omega, B}$ 
on $\Coh^{\omega, B}(X)$ is defined to be 
\[
\nu_{\omega, B}:= 
\frac{v^B_{2}-\frac{1}{6}v^B_{0}}
       {v^B_{1}}
\colon \Coh^{\omega, B}(X) \to (-\infty, +\infty], 
\]
and the notion of $\nu_{\omega, B}$-stability 
for objects in $\Coh^{\omega, B}(X)$ is defined 
similarly as $\mu_{\omega, B}$-stability for coherent sheaves. 
We also refer to $\nu_{\omega, B}$-stability as 
{\it tilt stability}. 
Note that the existence of Harder-Narasimhan filtration 
with respect to $\nu_{\omega, B}$-stability 
is shown in \cite[Lemma 3.2.4]{bmt14a}. 
We define full subcategories of $\Coh^{\omega, B}(X)$ as 
\begin{align*}
&\mathcal{T}^{'}_{\omega, B}
:= \left\langle T \in \Coh^{\omega, B}(X): 
T \mbox{ is }\nu_{\omega, B} 
\mbox{-semistable with } 
\nu_{\omega, B}(T)>0 
\right\rangle, \\ 
&\mathcal{F}^{'}_{\omega, B}
:=\left\langle F \in \Coh^{\omega, B}(X): 
F \mbox{ is } \nu_{\omega, B} 
\mbox{-semistable  with }
\nu_{\omega, B}(F) \leq 0 
\right\rangle. 
\end{align*}

Now we reach the definition of 
the double-tilted heart: 
\[
\mathcal{A}_{\omega, B}:=
\left\langle
\mathcal{F}^{'}_{\omega, B}[1], 
\mathcal{T}^{'}_{\omega, B} 
\right\rangle. 
\]

In the paper \cite{bmt14a}, 
Bayer, Macr{\`i}, and Toda conjectured the following: 

\begin{conj}[{\cite[Conjecture 3.2.6]{bmt14a}}]
\label{bmt}
The pair 
$\left(
Z_{\omega, B}, 
\mathcal{A}_{\omega, B} 
\right)$
is a stability condition on $D^b(X)$. 
\end{conj}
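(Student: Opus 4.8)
The plan is to deduce this categorical statement from a single numerical inequality of Bogomolov--Gieseker type, following the strategy of Bayer, Macr\`i and Toda \cite{bmt14a}. Verifying that $(Z_{\omega,B}, \mathcal{A}_{\omega,B})$ is a stability condition in the sense above requires that $Z_{\omega,B}$ restrict to a stability function on the double-tilted heart and that the Harder--Narasimhan property hold; for the deformation arguments below one additionally wants a support property. The first step is to recall from \cite{bmt14a} that all of these follow once one knows a Bogomolov--Gieseker type inequality on tilt-semistable objects: for every $\nu_{\omega,B}$-semistable $E \in \Coh^{\omega, B}(X)$ with $\nu_{\omega,B}(E)=0$, a suitable cubic bound on $\ch_3^B(E)$ holds, which is precisely the content of the inequality I would isolate as Conjecture \ref{bms}. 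Thus the entire problem is transported from the abstract heart to a concrete inequality among Chern character numbers, and it is this inequality that must be established for the threefolds at hand.

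The second step is to organise the argument by the Campana--Peternell classification (Theorem \ref{cp intro}). Since the inequality is already known for $\mathbb{P}^3$, the three-dimensional quadric, and cases (3)--(5), the genuinely new work lies in cases (6)--(8), each of which is the projectivisation of a vector bundle assembled from degree-zero line bundles over an abelian surface or an elliptic curve. Here I would use the degeneration technique of Bayer et al.\ \cite{bay}: construct a smooth family $\mathcal{X} \to \mathbb{A}^1$ whose fibres over $\mathbb{A}^1 \setminus \{0\}$ are all isomorphic to $X$ and whose central fibre $\mathcal{X}_0$ is the projectivisation of a \emph{split} bundle (Proposition \ref{degenerate}); by \cite{bay} the inequality for $X$ then follows from the inequality for $\mathcal{X}_0$. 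On the split central fibre the crucial extra structure is available, namely good finite morphisms, and one can pull tilt-semistable objects back along these morphisms and transfer the inequality as in \cite{kos17}.

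The third step treats $X = \mathbb{P}(\mathcal{T}_{\mathbb{P}^2})$, where I would set aside the Fano viewpoint of \cite{bmsz17} and instead regard $X$ as a $\mathbb{P}^1$-bundle over $\mathbb{P}^2$. Using the resulting full exceptional collection on $D^b(X)$, one gains enough control over the cohomology of a tilt-semistable object to verify the inequality directly, at least when $B$ and $\omega$ are proportional.

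The main obstacle, I expect, is exactly the non-split bundle cases (6)--(8): when the defining bundle is non-trivial there is in general no endomorphism of $X$ to exploit, so the direct approach of \cite{kos17} is unavailable, and the whole weight of the argument rests on the degeneration of Proposition \ref{degenerate} genuinely producing a split central fibre inside a smooth total space, together with the applicability of \cite{bay} to descend the inequality. Checking the hypotheses of that machinery --- smoothness of $\mathcal{X}$, constancy of the numerical invariants in the family, and preservation of tilt stability under the pullback on the split fibre --- is where I anticipate the real difficulty.
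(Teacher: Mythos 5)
Your proposal follows essentially the same route as the paper: the statement is only a conjecture in general, and what the paper actually establishes is the case of threefolds with nef tangent bundle by reducing to the Bogomolov--Gieseker type inequality (Conjectures \ref{original} and \ref{bms}), invoking the Campana--Peternell classification, degenerating the non-split cases (6)--(8) to split projective bundles via the family techniques of \cite{bay} (Proposition \ref{degenerate}) and then using the finite morphisms $F_m$ as in \cite{kos17}, and finally handling $\mathbb{P}(\mathcal{T}_{\mathbb{P}^2})$ with an Ext-exceptional collection coming from the $\mathbb{P}^1$-bundle structure. The obstacles you flag --- smoothness of the total space, matching of numerical invariants, and behaviour of tilt stability under pullback --- are exactly the points the paper addresses in Propositions \ref{family}, \ref{degenerate}, \ref{endo}, and \ref{toric split}.
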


Let us denote 
\[
\overline{\Delta}_{\omega, B}(E)
:=v^B_{1}(E)^2-2v^B_{0}(E)v^B_{2}(E) 
\]
and 
\[
\overline{\nabla}_{\omega, B}(E)
:=2(v_{2}^B(E))^2-3v_{1}^B(E)v_{3}^B(E). 
\]

The following is the so-called 
Bogomolov-Gieseker (BG) type inequality conjecture 
(\cite{bmt14a, bms16, pt15}). 

\begin{conj}[{\cite[Conjecture 3.8]{pt15}}]
\label{original}
For any $\nu_{\omega, B}$-stable object $E$, 
we have the inequality 
\[
\overline{\Delta}_{\omega, B}(E)
+6\overline{\nabla}_{\omega, B}(E)
\geq 0. 
\]
\end{conj}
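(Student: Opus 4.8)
The plan is to establish Conjecture \ref{original} for every smooth projective threefold $X$ with nef tangent bundle by reducing to the explicit models of the Campana--Peternell classification (Theorem \ref{cp intro}) and verifying the inequality on each. The starting observation is that the quantity $\overline{\Delta}_{\omega,B}(E) + 6\overline{\nabla}_{\omega,B}(E)$ behaves well under finite {\'e}tale covers: if $f \colon Y \to X$ is finite {\'e}tale and we set $\omega_{Y} = f^{*}\omega$ and $B_{Y} = f^{*}B$, then $f^{*}$ is exact, carries $\nu_{\omega,B}$-stable objects to $\nu_{\omega_{Y},B_{Y}}$-semistable ones, and scales each $v^{B}_{i}$ by $\deg f$, so that both discriminants rescale by $(\deg f)^{2}$ and the sign of the BG expression is preserved. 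It therefore suffices to verify the inequality on the nine model threefolds. Cases (1)--(4) and (9) are already in the literature, and case (5) is covered by the Fano approach of \cite{bmsz17}; the genuinely new content is cases (6)--(8), and I would additionally revisit (5) by a different route, both because it is the conceptual heart of the difficulty and because it produces new stability conditions.

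For the cases (6)--(8), each $X$ is a projectivization $\mathbb{P}_{B}(\mathcal{E})$ with $\mathcal{E}$ built from iterated extensions of degree-zero line bundles over an Abelian surface or an elliptic curve. Here I would invoke the deformation technique of Bayer et al.\ \cite{bay}: rescaling the extension classes of $\mathcal{E}$ by a parameter $t \in \mathbb{A}^{1}$ produces a smooth family $\mathcal{X} \to \mathbb{A}^{1}$ of projective bundles with $\mathcal{X}_{t} \cong X$ for all $t \neq 0$ and with central fibre $\mathcal{X}_{0}$ the split model $\mathbb{P}_{B}\!\left(\bigoplus_{i} L_{i}\right)$. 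By Proposition \ref{degenerate}, the inequality for $X$ follows from the same inequality for $\mathcal{X}_{0}$. On $\mathcal{X}_{0}$ I would then argue as in \cite{kos17}: although $\mathcal{X}_{0}$ is not a product, it carries good finite endomorphisms obtained by composing the base isogeny $[n]$ with a fibrewise degree-$n$ power map, and under iteration of such morphisms tilt-stability is preserved while the discriminants scale, which forces $\overline{\Delta}_{\omega,B} + 6\overline{\nabla}_{\omega,B} \geq 0$ exactly as in the product cases treated in \cite{kos17}.

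The remaining case (5), $X = \mathbb{P}(\mathcal{T}_{\mathbb{P}^{2}})$, I would treat by a method independent of its Fano structure, using instead the $\mathbb{P}^{1}$-bundle structure $X \to \mathbb{P}^{2}$ and the full exceptional collection on $D^{b}(X)$. After the standard reduction of Conjecture \ref{original} to $\nu_{\omega,B}$-stable objects $E$ with $\nu_{\omega,B}(E) = 0$, I would decompose such an $E$ against the exceptional collection, use the vanishing of the relevant $\Hom$ and $\Ext$ groups to constrain its Chern character, and then verify the inequality by a direct numerical estimate over the finitely many resulting possibilities.

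I expect the main obstacle to lie in this last step. Controlling the tilt-stable objects on $\mathbb{P}(\mathcal{T}_{\mathbb{P}^{2}})$ without a product structure or a polarized endomorphism is genuinely harder than in the other cases, and the numerical estimate coming from the exceptional collection appears to close only when $B$ and $\omega$ are proportional, leaving a general class $B + i\omega$ as the true sticking point. A secondary technical difficulty, arising already in cases (6)--(8), is that the endomorphisms of the split models scale the relative and the base polarizations by different factors ($n$ and $n^{2}$), so the scaling argument must be arranged along a suitable direction of $\omega$ for the discriminant comparison to yield the inequality.
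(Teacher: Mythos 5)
Your overall strategy --- \'etale reduction to the Campana--Peternell models, degeneration to the split bundles via \cite{bay}, Frobenius-type asymptotics in the style of \cite{kos17}, and an exceptional collection for $\mathbb{P}(\mathcal{T}_{\mathbb{P}^2})$ --- is the paper's strategy. There is, however, a genuine gap in your treatment of the split models in cases (6)--(8). You assert that the central fibre $\mathbb{P}\bigl(\bigoplus_i L_i\bigr)$ ``carries good finite endomorphisms obtained by composing the base isogeny $[n]$ with a fibrewise degree-$n$ power map''. Such a map does exist as an endomorphism $F_n'\colon X\to X$ (it comes from the inclusion $\mathcal{O}\oplus L^n\subset \Sym^n(\mathcal{O}\oplus L)$, as in Remark \ref{nonendo}), but it is \emph{not} polarized: it multiplies the tautological class by $n$ and the classes pulled back from the Abelian base by $n^2$, so $F_n'^*$ does not act as $n^{2i}$ on $H^{2i}$ and the asymptotic comparison that drives the argument of \cite{kos17} does not close for a general polarization. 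Your proposed remedy --- arranging the scaling ``along a suitable direction of $\omega$'' --- would at best prove the inequality for special $\omega$, which is strictly weaker than Theorem \ref{first thm} and is insufficient even for a fixed $\omega$, since the reduction to Conjecture \ref{bms} forces you to work at the divisor $\overline{B}=B+\bar{\beta}(E)\omega$, which varies with $E$. The paper's resolution (Proposition \ref{endo}) is different: one gives up on endomorphisms and constructs a finite morphism $F_m\colon X^{(\frac{1}{m})}\to X$ from the auxiliary threefold $X^{(\frac{1}{m})}=\mathbb{P}(\mathcal{O}\oplus L^{\frac{1}{m}})$, combining $[m]$ on the base with a fibrewise degree-$m^2$ toric Frobenius; after the identification of cohomology rings in Lemma \ref{iden cohom} this acts as $(x,y,z,w)\mapsto (x,m^2y,m^4z,m^6w)$, i.e.\ it behaves like a polarized endomorphism even though source and target differ, and this is what makes Proposition \ref{approx ext} and the splitting of $g_{m*}M$ in Proposition \ref{toric split} available for every ample $\omega$ and every $B$.

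A secondary problem is your description of the endgame: ``the discriminants scale, which forces $\overline{\Delta}_{\omega,B}+6\overline{\nabla}_{\omega,B}\geq 0$'' is not a proof mechanism, since both sides of that inequality are homogeneous under the scaling and so scaling alone forces nothing. The actual route is to pass to the equivalent Conjecture \ref{bms} via \cite[Theorem 3.20]{pt15}, i.e.\ to show $\ch_3^{\overline{B}}(E)\leq 0$ for $\bar{\beta}$-stable $E$, and then to bound $m^6q^6\ch_3^{\overline{B}}(E)=\chi\bigl(\mathcal{O},F_{mq}^*E\otimes\mathcal{O}(-m^2qD^{(\frac{1}{mq})})\bigr)+O(m^4)$ from above by $\hom+\ext^2=O(m^4)$, the required Hom-vanishings coming from tilt-stability of the line-bundle summands of the Frobenius pushforward. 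Your sketch omits this reduction, and without it the argument does not produce the stated inequality. Your account of case (5) is consistent with the paper's (an Ext-exceptional collection feeding \cite[Proposition 8.1.1]{bmt14a}, then Theorem \ref{li red}), including the correct observation that only $B$ proportional to $\omega$ is reached.
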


The BG type inequality conjecture implies 
the existence of a stability condition: 

\begin{prop}[{\cite[Corollary 5.2.4]{bmt14a}}]
Assume that Conjecture \ref{original} holds. 
Then Conjecture \ref{bmt} also holds. 
\end{prop}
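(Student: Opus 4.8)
The plan is to verify the three defining properties of a Bridgeland stability condition for the pair $(Z_{\omega, B}, \mathcal{A}_{\omega, B})$: that $\mathcal{A}_{\omega, B}$ is the heart of a bounded t-structure on $D^b(X)$, that $Z_{\omega, B}$ is a stability function on it, and that the Harder--Narasimhan property holds. The first point is essentially built into the construction: by the general theory of torsion pairs and tilting (\cite{hrs96}), tilting $\Coh^{\omega, B}(X)$ at the torsion pair $(\mathcal{T}'_{\omega, B}, \mathcal{F}'_{\omega, B})$ again produces the heart of a bounded t-structure, using the already established existence of Harder--Narasimhan filtrations for $\nu_{\omega, B}$-stability. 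The whole content therefore lies in the positivity (stability function) condition, and it is precisely here that Conjecture \ref{original} enters.

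First I would make the central charge explicit. Expanding $Z_{\omega, B} = -\int_X e^{-i\omega}.\ch^B$ and collecting by codimension yields
\[
\Re Z_{\omega, B}(E) = \frac{1}{2}v^B_{1}(E) - v^B_{3}(E), \qquad \Im Z_{\omega, B}(E) = v^B_{2}(E) - \frac{1}{6}v^B_{0}(E),
\]
so that $\Im Z_{\omega, B} = \nu_{\omega, B}\cdot v^B_{1}$ whenever $v^B_{1}\neq 0$. The positivity to be checked is that $Z_{\omega, B}(E) \in \mathcal{H}\cup\mathbb{R}_{<0}$ for every nonzero $E \in \mathcal{A}_{\omega, B}$. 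Since this region is closed under addition and $\mathcal{A}_{\omega, B}$ is generated by extensions from $\mathcal{T}'_{\omega, B}$ and $\mathcal{F}'_{\omega, B}[1]$, which are in turn built from $\nu_{\omega, B}$-semistable objects, additivity of $Z_{\omega, B}$ reduces the check to $\nu_{\omega, B}$-semistable objects of $\Coh^{\omega, B}(X)$ and their shifts. For $E$ with $\nu_{\omega, B}(E)>0$ and $v^B_1(E)>0$ we get $\Im Z_{\omega, B}(E) = \nu_{\omega,B}(E) v^B_1(E) > 0$ immediately; for $F$ with $\nu_{\omega, B}(F)\le 0$ the shift $F[1]$ has $\Im Z_{\omega, B}(F[1]) \ge 0$, so the only delicate case is the boundary $\nu_{\omega, B}(F)=0$, where $\Im Z_{\omega, B}(F[1])=0$ and one must produce $\Re Z_{\omega, B}(F[1])<0$.

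This boundary case is where I expect the main obstacle, and where the BG type inequality is used. For a $\nu_{\omega, B}$-semistable $F$ with $\nu_{\omega, B}(F)=0$ and $v^B_{1}(F)>0$ one has $v^B_{2}(F)=\frac{1}{6}v^B_{0}(F)$; substituting this into $\overline{\Delta}_{\omega, B}(F)+6\overline{\nabla}_{\omega, B}(F)$ cancels the $v^B_{0}v^B_{2}$ and $(v^B_{2})^2$ terms and leaves $v^B_{1}(F)\bigl(v^B_{1}(F)-18\,v^B_{3}(F)\bigr)$. Conjecture \ref{original} then forces $v^B_{3}(F)\le \frac{1}{18}v^B_{1}(F)$, whence
\[
\Re Z_{\omega, B}(F) = \frac{1}{2}v^B_{1}(F)-v^B_{3}(F) \ge \frac{4}{9}v^B_{1}(F) > 0,
\]
so that $\Re Z_{\omega, B}(F[1])<0$ as required. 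It remains to dispose of the degenerate objects with $v^B_{1}=0$, namely extensions of sheaves supported in dimension at most one and of shifts of $\mu_{\omega, B}$-semistable torsion-free sheaves of slope zero; these are controlled by the classical Bogomolov inequality already available from the first tilt, and satisfy positivity directly.

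Finally, for the Harder--Narasimhan property I would invoke the finiteness available in the framework of \cite{bmt14a, pt15}: the tilted heart is noetherian, and since $B \in \NS(X)_{\mathbb{Q}}$ and $\omega=mH$ with $m^2 \in \mathbb{Q}$, the imaginary part $\Im Z_{\omega, B}$ takes values in a discrete subgroup of $\mathbb{R}$. Together these preclude infinite chains of strictly decreasing phase and yield HN filtrations for $Z_{\omega, B}$ on $\mathcal{A}_{\omega, B}$. Combining the three verifications shows that $(Z_{\omega, B}, \mathcal{A}_{\omega, B})$ is a stability condition, i.e. Conjecture \ref{bmt} holds. The genuinely hard input is the wall $\nu_{\omega, B}(F)=0$, which is exactly the assertion of Conjecture \ref{original}; everything else is formal or follows the Bayer--Macr{\`i}--Toda machinery.
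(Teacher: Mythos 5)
Your argument is correct and is essentially the standard reduction from \cite{bmt14a, pt15}: the paper itself gives no proof of this proposition and simply cites \cite{pt15}, and your computation of $\Re Z_{\omega,B}=\tfrac12 v_1^B-v_3^B$, the cancellation at the wall $v_2^B=\tfrac16 v_0^B$ yielding $v_3^B\le\tfrac1{18}v_1^B$, and the handling of the $v_1^B=0$ objects and of the HN property via noetherianity plus discreteness of $\Im Z_{\omega,B}$ all match that source. The only point worth making explicit is that Conjecture \ref{original} is stated for $\nu_{\omega,B}$-\emph{stable} objects while the boundary case needs it for semistable ones; this follows by applying the inequality to the Jordan--H\"older factors (all of which have $\nu_{\omega,B}=0$ and $v_1^B>0$, so the linear conclusion $v_3^B\le\tfrac1{18}v_1^B$ is additive), a routine step you should record.
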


\subsection{Reduction theorems}
In this subsection, 
we recall two reduction theorems of the BG type inequality conjecture 
due to \cite{bms16, li18, pt15}. 

First we recall the following notion. 

\begin{defin}
Fix real numbers 
$\alpha_{0} >0$ and $\beta_{0}$. 
Let $E \in \Coh^{\alpha_{0}\omega, B+\beta_{0}\omega}(X)$ be 
a $\nu_{\alpha_{0}\omega, B+\beta_{0}\omega}$-semistable object. 

\begin{enumerate}
\item We define a real number 
$\bar{\beta}(E)$ as 
\[
\bar{\beta}(E):=
\frac{2v^B_{2}(E)}
     {v^B_{1}(E)+\sqrt{\overline{\Delta}_{\omega, B}(E)}}. 
\]

\item $E$ is $\bar{\beta}$-{\it semistable} (resp. {\it stable}) if 
there exists an open neighborhood $V$ of $(0, \bar{\beta}(E))$ 
in the $(\alpha, \beta)$-plane such that for every 
$(\alpha, \beta) \in V$ with $\alpha >0$, 
$E$ is $\nu_{\alpha\omega, B+\beta\omega}$-semistable 
(resp. stable). 
\end{enumerate} 
\end{defin}

The first reduction is of the following form. 

\begin{conj}[{\cite[Conjecture 3.17]{pt15}}]
\label{bms}
Let $E$ be a $\bar{\beta}$-stable object. 
Then we have 
\[
\ch^{B+\bar{\beta}(E)\omega}_{3}(E) \leq 0. 
\]
\end{conj}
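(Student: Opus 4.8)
The statement is the reduced Bogomolov--Gieseker inequality, which fails in general (as the counterexamples cited in the introduction show); accordingly I read the target as its validity for the threefolds considered here, and I would organise the proof around the Campana--Peternell classification (Theorem \ref{cp intro}). The first point is that the inequality behaves well under a good finite flat morphism $f\colon Y\to X$: the pulled-back class $f^*(B+i\omega)$ is again of the required type ($f^*\omega$ stays ample), the invariants $v^{f^*B}_i(f^*E)$ all scale by $\deg f$, so that $\bar\beta(f^*E)=\bar\beta(E)$ and $\ch^{f^*(B+\bar\beta(E)\omega)}_3(f^*E)=\deg f\cdot\ch^{B+\bar\beta(E)\omega}_3(E)$. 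Hence, as soon as $f^*$ sends a $\bar\beta$-stable object to a $\bar\beta$-semistable one, the inequality for $Y$ forces it for $X$; in particular it is inherited through finite \'etale covers, so by Theorem \ref{cp intro} it suffices to treat the explicit models. Cases (1)--(5) and the Abelian case are already in the literature, which leaves (6)--(8) and a sharper analysis of $\mathbb{P}(\mathcal{T}_{\mathbb{P}^2})$.

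For the non-split projectivizations in (6)--(8) there is no reason to expect the useful endomorphisms that drive the argument in \cite{kos17}, so I would not attack them directly. Instead, using Proposition \ref{degenerate} I would realise $X$ as the general fibre of a smooth family $\mathcal{X}\to\mathbb{A}^1$ whose special fibre $\mathcal{X}_0$ is the projectivization of the associated split bundle (obtained by degenerating the extension class to zero). By the deformation technique of \cite{bay}, the inequality for the general fibre $X$ then follows from the inequality for $\mathcal{X}_0$. The split fibres are in turn handled as in \cite{kos17}, via good finite morphisms relating a split projectivization over an Abelian surface or an elliptic curve to the product models $\mathbb{P}^1\times A$, $\mathbb{P}^2\times C$ and $\mathbb{P}^1\times\mathbb{P}^1\times C$ settled there, to which the pullback principle of the previous paragraph applies.

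For $X=\mathbb{P}(\mathcal{T}_{\mathbb{P}^2})$ I would set aside the Fano viewpoint of \cite{bmsz17} and instead use the $\mathbb{P}^1$-bundle structure $\pi\colon X\to\mathbb{P}^2$ together with the full exceptional collection on $D^b(X)$ obtained by Orlov's formula from an exceptional collection on $\mathbb{P}^2$. The exceptional objects would serve as test objects: their stability and the resulting $\Hom$- and $\Ext$-vanishings pin down the numerical class of a $\bar\beta$-stable object $E$, after which $\ch^{B+\bar\beta(E)\omega}_3(E)$ can be computed and shown to be non-positive, at least when $B$ and $\omega$ are proportional---the only regime I expect to control, in line with the discussion following Theorem \ref{main thm}.

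The step I expect to be the main obstacle is the deformation in the non-split cases. Applying \cite{bay} means genuinely producing the family of Proposition \ref{degenerate} with smooth total space, flat projection and general fibres all isomorphic to $X$, and then checking that the inequality specialises correctly, so that $\mathcal{X}_0$ really does control $X$. A second, more technical, difficulty is the transfer of stability under the finite morphisms in the split case: one must ensure that pullback neither destroys $\bar\beta$-semistability nor creates a destabilizing subobject violating the bound. This is exactly the place where the hypothesis that $\mathcal{T}_X$ is nef---which keeps the cone of effective divisors well-behaved and rules out the negative divisors responsible for the general counterexamples---does the essential work.
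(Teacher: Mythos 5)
Your overall architecture---reduce by \'etale Galois covers to the Campana--Peternell list, degenerate the non-split projectivizations to split ones via the family technique of \cite{bay} (Propositions \ref{family} and \ref{degenerate}), and handle $\mathbb{P}(\mathcal{T}_{\mathbb{P}^2})$ separately through the $\mathbb{P}^1$-bundle structure and an exceptional collection---is exactly the paper's. The genuine gap is in your mechanism for the split case. You propose a one-shot ``pullback principle'': find a finite morphism relating the split projectivization $X=\mathbb{P}_A(\mathcal{O}_A\oplus L)$ to the product model $\mathbb{P}^1\times A$ and transfer the inequality. No such morphism exists when $L\in\Pic^0(A)$ is non-torsion: $X$ is then neither a finite cover nor a finite quotient of $\mathbb{P}^1\times A$, and this is precisely why the endomorphism argument of \cite{kos17} does not carry over verbatim. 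What the paper actually does (Propositions \ref{endo} and \ref{toric split}) is build a tower of finite morphisms $F_{mq}\colon X^{(\frac{1}{mq})}\to X$ whose sources are again non-trivial projectivizations $\mathbb{P}_A(\mathcal{O}_A\oplus L^{\frac{1}{mq}})$, not products; these are not endomorphisms, but they act on $H^{2*}$ exactly as a polarized endomorphism would (formula (\ref{act})). The inequality $\ch_3^{\overline{B}}(E)\le 0$ is then extracted asymptotically: Riemann--Roch gives $m^6q^6\ch_3^{\overline{B}}(E)+O(m^4)\le \hom\bigl(\mathcal{O},F_{mq}^*E\otimes\mathcal{O}(-m^2qD^{(\frac{1}{mq})})\bigr)+\ext^2\bigl(\mathcal{O},F_{mq}^*E\otimes\mathcal{O}(-m^2qD^{(\frac{1}{mq})})\bigr)$, and the right-hand side is shown to be $O(m^4)$ using the splitting of $g_{mq*}$ of line bundles into explicit line bundles together with the tilt-stability of line bundles (Lemma \ref{tilt stab of lb}, which is where nefness of effective divisors enters). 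The product models $\mathbb{P}^1\times A$, $\mathbb{P}^2\times C$, $\mathbb{P}^1\times\mathbb{P}^1\times C$ appear only through the identification of N\'eron--Severi groups (Lemma \ref{ns}) that lets one quote a positivity computation from \cite{kos17}; they do not receive or donate the inequality by pullback.

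A secondary discrepancy: for $\mathbb{P}(\mathcal{T}_{\mathbb{P}^2})$ the paper does not ``pin down the numerical class'' of a $\bar\beta$-stable object. It builds the quiver heart $\mathcal{C}$ from an Ext-exceptional collection, verifies the hypotheses of Proposition \ref{aux} for $0<\alpha<\alpha_0$ directly---including the analysis of subobjects of skyscraper sheaves inside $\mathcal{C}$---and then extends to all $\alpha>\frac{1}{2\sqrt{3}}$ by Li's reduction (Theorem \ref{li red}). Your instinct that only $B$ proportional to $\omega$ is accessible here is correct, as is your identification of the deformation step and the stability transfer as the delicate points.
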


\begin{thm}[{\cite[Theorem 3.20]{pt15}}]
Conjectures \ref{original} and \ref{bms} 
are equivalent. 
\end{thm}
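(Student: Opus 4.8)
The plan is to work throughout with the two-parameter family of tilt stability conditions $\nu_{\alpha\omega,\,B+\beta\omega}$ for $(\alpha,\beta)$ ranging over $\{\alpha>0\}$, and to study how $\overline{\Delta}$ and $\overline{\nabla}$ behave as $(\alpha,\beta)$ approaches the boundary point $(0,\bar{\beta}(E))$. First I would record that for any $\nu_{\omega,B}$-semistable object $E$ the Bogomolov--Gieseker inequality $\overline{\Delta}_{\omega,B}(E)\ge 0$ holds (a foundational result of \cite{bmt14a}), so that $\bar{\beta}(E)$ is a well-defined real number. I would then make the geometric meaning of $\bar{\beta}(E)$ explicit: expanding $\ch^{B+\beta\omega}=e^{-\beta\omega}.\ch^{B}$, one checks that $\overline{\Delta}_{\omega,B}$ is invariant under the shift $B\mapsto B+\beta\omega$, that the equation $v^{B+\beta\omega}_{2}(E)=0$ is the quadratic $\tfrac{1}{2}v^B_0\beta^2-v^B_1\beta+v^B_2=0$, and that $\bar{\beta}(E)$ is exactly its root $\bigl(v^B_1-\sqrt{\overline{\Delta}_{\omega,B}(E)}\bigr)/v^B_0$. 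In particular, at $\beta=\bar{\beta}(E)$ one has
\[
v^{B+\bar{\beta}(E)\omega}_{2}(E)=0,\qquad v^{B+\bar{\beta}(E)\omega}_{1}(E)=\sqrt{\overline{\Delta}_{\omega,B}(E)}.
\]

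The key computation I would isolate is the resulting boundary identity: since $v_2$ vanishes there,
\[
\overline{\nabla}_{\omega,\,B+\bar{\beta}(E)\omega}(E)=-3\,v^{B+\bar{\beta}(E)\omega}_{1}(E)\,\ch^{B+\bar{\beta}(E)\omega}_{3}(E)=-3\sqrt{\overline{\Delta}_{\omega,B}(E)}\;\ch^{B+\bar{\beta}(E)\omega}_{3}(E).
\]
Thus the sign of $\overline{\nabla}$ at $\bar{\beta}(E)$ is governed precisely by the sign of $\ch^{B+\bar{\beta}(E)\omega}_{3}(E)$, and this is the bridge between the two conjectures. For the implication Conjecture \ref{original} $\Rightarrow$ Conjecture \ref{bms}, I would take a $\bar{\beta}$-stable object $E$, apply Conjecture \ref{original} at the nearby stability conditions $\nu_{\alpha\omega,\,B+\beta\omega}$ where $E$ is stable, and pass to the limit $(\alpha,\beta)\to(0,\bar{\beta}(E))$. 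Tracking the $\alpha$-weights shows that $\overline{\Delta}_{\alpha\omega,B+\beta\omega}(E)$ vanishes to higher order in $\alpha$ than $\overline{\nabla}_{\alpha\omega,B+\beta\omega}(E)$; dividing the inequality by the appropriate power of $\alpha$ and letting $\alpha\to 0$ leaves $\overline{\nabla}_{\omega,B+\bar{\beta}(E)\omega}(E)\ge 0$, which by the identity above and $v_1\ge 0$ gives $\ch^{B+\bar{\beta}(E)\omega}_{3}(E)\le 0$.

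The converse, Conjecture \ref{bms} $\Rightarrow$ Conjecture \ref{original}, is where the real work lies, and I would prove it by induction on the non-negative, discretely-valued quantity $\overline{\Delta}_{\omega,B}(E)$. Given a $\nu_{\omega,B}$-stable object $E$, I would deform the parameters towards the boundary point $(0,\bar{\beta}(E))$ inside the $(\alpha,\beta)$-plane. If $E$ stays semistable along the way, it is $\bar{\beta}$-semistable, so Conjecture \ref{bms} together with the boundary identity gives the inequality in the limit; one then propagates it back to $(\omega,B)$ using that, within a chamber of stability, the expression $\overline{\Delta}_{\omega,B}+6\overline{\nabla}_{\omega,B}$ cannot change sign for semistable objects (a support-property/Bogomolov-type argument). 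If instead $E$ is destabilized at a wall, its Jordan--H\"older factors have strictly smaller discriminant and satisfy the inequality by the inductive hypothesis; I would then recombine, using additivity of $\ch$ and the proportionality of the vectors $(v^B_0,v^B_1,v^B_2)$ of the factors along the wall, to recover the inequality for $E$.

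The hard part will be exactly this converse: controlling the behaviour of $\overline{\Delta}+6\overline{\nabla}$ under the deformation and across walls, so that verification at the single boundary value $\bar{\beta}(E)$ suffices at the original $(\omega,B)$. The argument depends on the nested semicircular structure of walls in tilt stability, and the most delicate points are the degenerate strata $v^B_0=0$, $v^B_1=0$, or $\overline{\Delta}_{\omega,B}(E)=0$, where $\bar{\beta}(E)$ and the boundary identity must be interpreted with care.
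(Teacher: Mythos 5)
The paper does not actually prove this statement --- it is imported as a black box from \cite[Theorem 3.20]{pt15} --- so the only meaningful benchmark is the argument of Piyaratne--Toda (building on \cite[Theorem 4.2]{bms16}), and your outline does track that argument's architecture faithfully. Your preparatory computations are all correct: $\bar{\beta}(E)$ is the root of $v_2^{B+\beta\omega}(E)=0$ with $v_1^{B+\bar{\beta}(E)\omega}(E)=\sqrt{\overline{\Delta}_{\omega,B}(E)}$, the boundary identity $\overline{\nabla}=-3v_1\ch_3$ holds where $v_2$ vanishes, and the forward implication (Conjecture \ref{original} $\Rightarrow$ Conjecture \ref{bms}) by comparing $\alpha$-weights ($\overline{\Delta}_{\alpha H,B}=\alpha^4\overline{\Delta}_{H,B}$ versus $\overline{\nabla}_{\alpha H,B}=\alpha^2\overline{\nabla}_{H,B}$) and letting $\alpha\to 0$ is exactly how that direction is proved.

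The converse, however, is only a roadmap, and the one substantive bridging claim you do make there is wrong as stated. The assertion that ``within a chamber of stability, $\overline{\Delta}_{\omega,B}+6\overline{\nabla}_{\omega,B}$ cannot change sign for semistable objects'' by a support-property argument has no basis: for a fixed class this quantity is a nonconstant polynomial in $(\alpha,\beta)$, and $0$ is precisely the boundary value the conjecture asserts is not crossed --- so ``cannot change sign'' is essentially the conclusion you are trying to prove, not a tool available in advance. What the cited proof uses instead is: (i) a reduction of Conjecture \ref{original} for arbitrary tilt-slope to objects with $\nu_{\alpha\omega,B+\beta\omega}(E)=0$, where the inequality becomes $\ch_3^{B+\beta\omega}(E)\le\frac{\alpha^2}{18}H^2.\ch_1^{B+\beta\omega}(E)$; (ii) the fact that $\{\nu_{\alpha\omega,B+\beta\omega}(E)=0\}$ is a semicircle with endpoint $(0,\bar{\beta}(E))$, along which this inequality propagates from the endpoint by an explicit computation; and (iii) the quantitative wall-crossing input $0\le\overline{\Delta}(F_i)$ and $\sum_i\overline{\Delta}(F_i)\le\overline{\Delta}(E)$ for the Jordan--H\"older factors at a wall, which both grounds your induction and --- together with the proportionality of the truncated vectors along the wall and a short convexity computation --- allows the inequalities for the factors to be recombined. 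None of (i)--(iii) is established in your write-up; you flag them honestly as ``the hard part,'' but they constitute essentially the entire content of the converse, so the proposal is a correct plan rather than a proof.
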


Using the same technique, 
the following result was proved in \cite{li18}. 

\begin{thm}[{\cite[Theorem 3.2]{li18}}]
\label{li red}
Let $H$ be an ample divisor on X. 
Assume that there exists a real number 
$\alpha_{0}>0$ such that 
for every real number 
$0 < \alpha <\alpha_{0}$, 
Conjecture \ref{original} is true 
for $(X, \alpha H, B=0)$. 
Then it also holds for $(X, \alpha H, \beta H)$ 
with any choice of $\alpha \geq \frac{1}{2\sqrt{3}}$ 
and $\beta \in \mathbb{R}$. 
\end{thm}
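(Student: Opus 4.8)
The plan is to exploit the fact that, once the polarization and twist are taken proportional to a fixed ample class $H$, the quantity governing Conjecture~\ref{original} is an affine function of $\alpha^2$ whose linear coefficient is the classical Bogomolov--Gieseker discriminant, which is nonnegative on tilt-semistable objects. Writing $\omega=\alpha H$ and $B=\beta H$, a direct expansion of $\ch^{\beta H}=e^{-\beta H}.\ch$ gives
\[
\overline{\Delta}_{\alpha H,\beta H}(E)=\alpha^4\,\overline{\Delta}_H(E),\qquad \overline{\Delta}_H(E):=(H^2\ch_1(E))^2-2\,(H^3\ch_0(E))(H\ch_2(E)),
\]
and, after collecting the $\beta$-dependence into the $\alpha\to 0^+$ limit $G_0(\beta)$,
\[
\overline{\Delta}_{\alpha H,\beta H}(E)+6\,\overline{\nabla}_{\alpha H,\beta H}(E)=\alpha^2\bigl(\alpha^2\,\overline{\Delta}_H(E)+G_0(\beta)\bigr),
\]
where $G_0$ is a quadratic polynomial in $\beta$ with leading coefficient $3\,\overline{\Delta}_H(E)$. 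Since $E$ tilt-semistable forces $\overline{\Delta}_H(E)\ge0$ (the classical inequality of \cite{bmt14a}), the bracket is nondecreasing in $\alpha^2$. This is the structural engine: it converts the claim ``for all $\alpha\ge\tfrac1{2\sqrt3}$'' into the single threshold statement at $\alpha^2=\tfrac1{12}$.

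First I would reduce the conclusion to the inequality $G_0(\beta)\ge-\tfrac1{12}\,\overline{\Delta}_H(E)$ for every $E$ that is $\nu_{\alpha H,\beta H}$-semistable for some $\alpha\ge\tfrac1{2\sqrt3}$; by the monotonicity above this is exactly the assertion of Conjecture~\ref{original} at the worst point $\alpha=\tfrac1{2\sqrt3}$, and it then propagates to all larger $\alpha$. Next I would use the two autoequivalences available: tensoring by $\mathcal{O}_X(nH)$ acts by $\ch\mapsto e^{nH}.\ch$, i.e.\ by the shift $\beta\mapsto\beta-n$, and preserves both $\overline{\Delta}_H$ and the validity of Conjecture~\ref{original}; the derived dual implements $\beta\mapsto-\beta$. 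Together they transport the hypothesis (valid at $\beta=0$) to all $\beta\in\mathbb{Z}$ and reduce the target to $\beta$ in a bounded fundamental domain such as $[0,\tfrac12]$.

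Then I would run the $\alpha\to0^+$ degeneration: a given $E$, semistable at $(\alpha,\beta)$ with $\alpha\ge\tfrac1{2\sqrt3}$, need not stay semistable as $\alpha\searrow0$ along the vertical ray, but it crosses only finitely many walls, and at each wall the Jordan--H\"older factors have strictly smaller $\overline{\Delta}_H$, still $\ge0$. Inducting on $\overline{\Delta}_H$ therefore reduces the bound to objects that remain semistable all the way to the $\beta$-axis, i.e.\ to $\bar\beta$-semistable objects, for which the hypothesis together with Conjecture~\ref{bms} (via \cite[Theorem~3.20]{pt15}) controls the limiting value $\ch_3^{\beta H}(E)$ that enters $G_0$.

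The hard part will be the final uniform estimate: turning the endpoint information ``$G_0\ge0$ at the pinned values of $\beta$'' into the sharp interior bound $G_0(\beta)\ge-\tfrac1{12}\,\overline{\Delta}_H$, with the precise constant $\tfrac1{12}=(\tfrac1{2\sqrt3})^2$. Because $G_0$ is an upward parabola of leading coefficient $3\,\overline{\Delta}_H$, naive interpolation between consecutive pinned points only bounds its dip by a multiple of $\overline{\Delta}_H$ that is far too large; obtaining $\tfrac1{12}$ forces one to use the full stability constraints on $(\ch_0,\ch_1,\ch_2,\ch_3)$ --- in particular the location of $\bar\beta(E)$ and the sign of $\ch_3^{\bar\beta(E)H}(E)$ from Conjecture~\ref{bms} --- rather than merely the nonnegativity of $G_0$ at the endpoints. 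This coupling of the wall-crossing bookkeeping with the sharp quadratic estimate is where the real work, and the emergence of the threshold $\tfrac1{2\sqrt3}$, lies.
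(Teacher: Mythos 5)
First, note that the paper contains no proof of Theorem \ref{li red}: it is imported verbatim from \cite[Theorem 3.2]{li18}, so your proposal can only be measured against Li's argument. Your setup is sound and matches that argument in several genuine respects: the identity $\overline{\Delta}_{\alpha H,\beta H}(E)=\alpha^4\overline{\Delta}_H(E)$ is correct; the quantity $\overline{\Delta}+6\overline{\nabla}$ really does factor as $\alpha^2\bigl(\alpha^2\overline{\Delta}_H(E)+G_0(\beta)\bigr)$ with $G_0$ quadratic in $\beta$ of leading coefficient $3\overline{\Delta}_H(E)$ (the cubic and quartic terms in $\beta$ cancel, as you implicitly use); the monotonicity in $\alpha^2$ via the Bogomolov inequality for tilt-semistable objects correctly reduces everything to the threshold $\alpha^2=\tfrac1{12}$; and the transport of the hypothesis to integer values of $\beta$ by $-\otimes\mathcal{O}(nH)$ and derived duality, combined with a wall-crossing induction on $\overline{\Delta}_H$ down to $\bar\beta$-stable objects, are all ingredients of \cite{li18}.

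The gap is at the decisive step. You invoke ``the hypothesis together with Conjecture \ref{bms}'' to control $\ch_3^{\bar\beta(E)H}(E)$ for $\bar\beta$-stable objects. Conjecture \ref{bms} is not among the hypotheses, and the stated hypothesis only yields its conclusion for objects with $\bar\beta(E)\in\mathbb{Z}$ (by letting $\alpha\to0^+$ at $\beta=n$ after tensoring); assuming Conjecture \ref{bms} outright would make the theorem an immediate consequence of \cite[Theorem 3.20]{pt15} with no restriction on $\alpha$ at all, so the argument as written is circular exactly where it matters. For non-integral $\bar\beta(E)$ --- the essential case --- the only pinned information is $G_0(n)\ge0$ at the adjacent integers, and, as you yourself compute in spirit, interpolating an upward parabola of leading coefficient $3\overline{\Delta}_H$ across an interval of length $1$ only gives $G_0\ge-\tfrac34\overline{\Delta}_H$, i.e.\ $\alpha\ge\tfrac{\sqrt3}{2}$ rather than $\tfrac1{2\sqrt3}$, a factor of $9$ off in $\alpha^2$. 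You then declare this ``where the real work lies'' and stop; but that remaining work is the theorem. Closing it requires the finer input Li actually uses: applying the hypothesis not to $E$ but to its Harder--Narasimhan factors at $\beta=\lfloor\bar\beta(E)\rfloor$ and $\beta=\lceil\bar\beta(E)\rceil$, together with the positivity of $H^2\ch_1^{\beta H}$ of all such factors and the vanishing $H\ch_2^{\bar\beta(E)H}(E)=0$ pinning the vertex of $G_0$. Two smaller points: an object stable at some $(\alpha,\beta)$ with $\alpha\ge\tfrac1{2\sqrt3}$ need not remain stable at $\alpha=\tfrac1{2\sqrt3}$, so your ``worst point'' reduction is only the numerical inequality you state, not an instance of Conjecture \ref{original} there; and Jordan--H\"older factors at a wall satisfy $\overline{\Delta}_H(F_i)\le\overline{\Delta}_H(E)$ with possible equality, so the induction needs the standard treatment of the equality case as in \cite{bms16}.
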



\subsection{Counter-examples}

Counter-examples to Conjecture \ref{bmt} 
are constructed in the papers \cite{kos17, ms19, sch17}. 
In particular, we have the following result: 
 
\begin{lem}[{\cite[Lemma 3.1]{ms19}}]
\label{neg lem}
Let $H$ be an ample divisor. 
Assume that there exists an effective divisor $D$ 
such that 
\begin{equation}
\label{neg}
D^3 
>\frac{(H^2.D)^3}{4(H^3)^2} 
+\frac{3}{4}\frac{(H.D^2)^2}{H^2.D}. 
\end{equation}
Then there exists a pair $(\alpha, \beta)$ 
of real numbers such that 
the pair 
$(Z_{\alpha H, \beta H}, 
\mathcal{A}_{\alpha H, \beta H})$ 
does not define a stability condition. 
\end{lem}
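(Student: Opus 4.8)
The plan is to exhibit a single object that obstructs the stability condition, namely the line bundle $E:=\mathcal{O}_X(D)$ associated to the effective divisor $D$, and to read off the failure from the central charge along a tilt-stability wall. I take the reference classes $\omega=H$ and $B=0$, so that the $(\alpha,\beta)$-plane parametrises the tilt stabilities $\nu_{\alpha H,\beta H}$ appearing in the statement, and for a point $(\alpha,\beta)$ I abbreviate $L:=D-\beta H$, so that $\ch^{\beta H}(E)=e^{L}$ and in particular $\ch_0^{\beta H}(E)=1$, $\ch_1^{\beta H}(E)=L$, $\ch_2^{\beta H}(E)=L^2/2$, $\ch_3^{\beta H}(E)=L^3/6$.

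The key geometric input, which I expect to be the main obstacle, is that the rank-one sheaf $E$ is $\nu_{\alpha H,\beta H}$-stable for the relevant parameters, and in particular is $\bar\beta$-stable. Since $E$ has rank one it is automatically $\mu_{\alpha H,\beta H}$-stable, so the content is that no actual wall in the $(\alpha,\beta)$-plane destabilises $E$ as a tilt-stable object as $(\alpha,\beta)\to(0,\bar\beta(E))$; this is the standard but genuinely delicate fact that line bundles remain stable after the first tilt, and it is the step where one must rule out destabilising subobjects of equal or smaller rank in $\Coh^{\alpha H,\beta H}(X)$.

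Granting this, the computation is as follows. By definition $\bar\beta(E)$ is the smaller root of $H\cdot\ch_2^{\beta H}(E)=\tfrac12 HL^2=0$, i.e. of $H^3\beta^2-2(H^2D)\beta+(HD^2)=0$; its discriminant $\overline{\Delta}_{H,0}(E)=(H^2D)^2-H^3(HD^2)$ is non-negative by the Hodge index theorem, so $\bar\beta(E)$ is real. Writing $L_0:=D-\bar\beta(E)H$, so that $HL_0^2=0$, one computes $\ch_3^{B+\bar\beta(E)\omega}(E)=L_0^3/6$, and a direct manipulation shows that the hypothesis \eqref{neg} forces $L_0^3>0$. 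Thus $E$ is a $\bar\beta$-stable object with $\ch_3^{B+\bar\beta(E)\omega}(E)>0$, violating Conjecture \ref{bms}, and hence, by \cite[Theorem 3.20]{pt15}, violating Conjecture \ref{original}: the obstruction is the expected one.

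It remains to turn this into the failure of the pair at an explicit point. Consider the wall $\nu_{\alpha H,\beta H}(E)=0$, equivalently $\Im Z_{\alpha H,\beta H}(E)=\tfrac12\alpha HL^2-\tfrac16\alpha^3H^3=0$, which determines $\alpha^2=3HL^2/H^3$; as $\beta\to\bar\beta(E)^-$ one has $HL^2\to0^+$ and hence $\alpha\to0^+$. Along this wall $\mu_{\alpha H,\beta H}(E)>0$ (because $\beta<\bar\beta(E)\le H^2D/H^3$) and $\nu_{\alpha H,\beta H}(E)=0$, so by the tilt-stability established above $E\in\mathcal{F}'_{\alpha H,\beta H}$ and the shift $E[1]$ lies in the heart $\mathcal{A}_{\alpha H,\beta H}$. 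Now $Z_{\alpha H,\beta H}(E[1])=-Z_{\alpha H,\beta H}(E)$ has vanishing imaginary part, while its real part is $\tfrac16L^3-\tfrac12\alpha^2(H^2L)\to\tfrac16L_0^3>0$ as $\beta\to\bar\beta(E)^-$. Choosing a rational $\beta$ slightly below $\bar\beta(E)$, so that the convention $\alpha^2\in\mathbb{Q}$ holds, we obtain a non-zero object $E[1]\in\mathcal{A}_{\alpha H,\beta H}$ with $\Im Z_{\alpha H,\beta H}(E[1])=0$ and $\Re Z_{\alpha H,\beta H}(E[1])>0$, contradicting the positivity $Z(\mathcal{A}\setminus\{0\})\subset\mathcal{H}\cup\mathbb{R}_{<0}$ required of a stability function. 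Hence $(Z_{\alpha H,\beta H},\mathcal{A}_{\alpha H,\beta H})$ is not a stability condition for this $(\alpha,\beta)$.
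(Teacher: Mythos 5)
This lemma is not proved in the paper at all: it is imported verbatim from \cite[Lemma 3.1]{msd17}, so there is no in-paper argument to compare against. Judged on its own terms, your proposal has the right numerical skeleton but a genuine gap at its load-bearing step.

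The computations you do carry out are correct. With $x=H^2.D$, $y=H.D^2$, $h=H^3$, $s=\sqrt{x^2-hy}$ one has $\bar\beta(\mathcal{O}(D))=(x-s)/h$, and setting $L_0=D-\bar\beta H$ a short manipulation gives $L_0^3=D^3-2\bar\beta y+\bar\beta^2 x$ together with
\[
\frac{x^3}{4h^2}+\frac{3y^2}{4x}-\bigl(2\bar\beta y-\bar\beta^2 x\bigr)
=\frac{s^2\left(6x^2-8xs+3s^2\right)}{4xh^2}\geq 0,
\]
so the hypothesis (\ref{neg}) does force $L_0^3>0$, and your limiting computation of $\Re Z_{\alpha H,\beta H}(\mathcal{O}(D)[1])$ along the locus $\Im Z=0$ is also correct.

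The gap is the tilt-(semi)stability of $\mathcal{O}(D)$ near $(0,\bar\beta(\mathcal{O}(D)))$, which you flag, call ``standard but genuinely delicate,'' and then simply grant. It is not standard in the generality of this lemma, and it is exactly where all the remaining content sits. The only general criterion available (Lemma \ref{tilt stab of lb}, i.e.\ \cite[Corollary 3.11]{bms16}) requires $\omega.D'^2\geq 0$ for \emph{every} effective divisor $D'$; that positivity hypothesis is precisely what fails on the threefolds where (\ref{neg}) has solutions (blow-ups being the motivating example), so it cannot be invoked here. Without at least semistability of $\mathcal{O}(D)$ at the chosen point $(\alpha,\beta)$ you cannot place $\mathcal{O}(D)$ in $\mathcal{F}'_{\alpha H,\beta H}$, hence cannot conclude $\mathcal{O}(D)[1]\in\mathcal{A}_{\alpha H,\beta H}$, and the final contradiction with the positivity of the stability function evaporates; likewise the claimed violation of Conjecture \ref{bms} presupposes $\bar\beta$-stability. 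A telling symptom is that your argument never uses the effectivity of $D$ except through this unproved stability claim: the numerics alone would ``prove'' the lemma for an arbitrary divisor class with the same intersection numbers, which cannot be right. In \cite{msd17} the effectivity of $D$ (via the section $\mathcal{O}(-D)\hookrightarrow\mathcal{O}_X$) is what controls the relevant walls and supplies the (semi)stability input; to complete your proof you would need to reproduce an argument of that kind rather than cite a general fact.
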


\begin{rmk}
\label{rmk on neg}
Let $D$ be a nef divisor. 
We claim that $D$ does not satisfy the inequality (\ref{neg}). 
By the Hodge index theorem for nef divisors, 
we have the following inequalities: 
\begin{align}
&(H^2.D)^3 \geq (H^3)^2 \cdot D^3 \label{h1} \\
&(H.D^2)^3 \geq H^3 \cdot (D^3)^2. \label{h2}
\end{align}

On the other hand, by replacing $H$ 
with its sufficiently large multiple 
and taking a smooth member, 
the Hodge index theorem on $H$ leads the inequality 
\begin{equation}
\label{h3}
(H^2.D)^2
=(H|_{H}.D|_{H})^2 
\geq (H|_{H})^2 \cdot (D|_{H})^2
=H^3 \cdot H.D^2. 
\end{equation}

The inequality (\ref{h1}) is equivalent to 
the inequality 
\begin{equation}
D^3 \leq \frac{(H^2.D)^3}{(H^3)^2}. 
\label{h4}
\end{equation}

Furthermore, by the inequalities 
(\ref{h2}), (\ref{h3}), and (\ref{h4}), 
we have 
\begin{equation}
\label{h5}
\begin{aligned}
\frac{(H.D^2)^2}{H^2.D} 
&\geq 
\frac{H^3 \cdot (D^3)^2}{H^2.D \cdot H.D^2} 
\quad (\mbox{by }(\ref{h2}))\\
&\geq \frac{(H^2.D)^2}{H.D^2 \cdot H^3}D^3 
\quad (\mbox{by } (\ref{h4})) \\ 
&\geq D^3 
\quad (\mbox{by } (\ref{h3})). 
\end{aligned}
\end{equation}

By combining the inequalities 
(\ref{h4}) and (\ref{h5}), 
we conclude that $D$ satisfies 
the opposite inequality to 
that in (\ref{neg}). 
Hence we can think the inequality (\ref{neg}) 
as a kind of negativity conditions on an effective divisor. 
We can still expect that Conjecture \ref{bmt} 
and Conjecture \ref{original} are true 
if all effective divisors satisfy 
some positivity conditions. 
\end{rmk}


\subsection{Threefolds with nef tangent bundles}
In this subsection, we recall results 
on threefolds with nef tangent bundles, 
which we will need in this paper. 

\begin{prop}[{\cite[Proposition 2.12]{cp91}}]
\label{eff nef}
Let $X$ be a smooth projective variety 
with nef tangent bundle. 
Then every effective divisor on $X$ is nef. 
\end{prop}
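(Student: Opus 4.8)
The plan is to verify nefness numerically, using that $D$ is nef if and only if $D\cdot C\ge 0$ for every irreducible curve $C\subset X$, together with the two standard permanence properties of nef bundles that I will invoke freely: pullbacks of nef bundles are nef, and quotients of nef bundles are nef. Writing $D=\sum a_i D_i$ with $a_i>0$ and $D_i$ prime, a nonnegative combination of nef divisors is nef, so it suffices to treat a prime divisor $D$. Moreover, if $C\not\subset D$ then $D\cdot C=\deg(\mathcal{O}_X(D)|_C)$ is the degree of the effective Cartier divisor $D|_C$ on $C$, hence $\ge 0$ automatically. Thus the entire problem concentrates on irreducible curves $C$ contained in $D$.

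The cleanest case, which motivates the whole argument, is when $D$ is smooth. There the conormal sequence $0\to N^{*}_{D/X}\to \Omega_X|_D\to \Omega_D\to 0$ dualizes to a surjection $\mathcal{T}_X|_D\twoheadrightarrow N_{D/X}=\mathcal{O}_X(D)|_D$. Since $\mathcal{T}_X|_D$ is nef and a quotient of a nef bundle is nef, the line bundle $\mathcal{O}_X(D)|_D$ is nef on $D$, which yields $D\cdot C=\deg(\mathcal{O}_X(D)|_C)\ge 0$ for every curve $C\subset D$.

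For a general, possibly singular, prime divisor $D$, I would pass to a resolution $g\colon \tilde D\to D\hookrightarrow X$ with $\tilde D$ smooth, and for a curve $C\subset D$ choose a smooth projective curve $h\colon \tilde C\to X$ factoring through $\tilde D$ and dominating $C$, so that $D\cdot C\ge 0$ is equivalent to $\deg\bigl(h^{*}\mathcal{O}_X(D)\bigr)\ge 0$. The pulled-back conormal map $h^{*}\delta\colon h^{*}N^{*}_{D/X}\to h^{*}\Omega_X$ dualizes to a map $h^{*}\mathcal{T}_X\to h^{*}\mathcal{O}_X(D)$ of bundles on the smooth curve $\tilde C$. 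Whenever this map is nonzero, its image is a rank-one subsheaf of the line bundle $h^{*}\mathcal{O}_X(D)$, hence of the form $h^{*}\mathcal{O}_X(D)(-Z)$ with $Z\ge 0$ effective; being a quotient of the nef bundle $h^{*}\mathcal{T}_X$ it has nonnegative degree, so $\deg h^{*}\mathcal{O}_X(D)\ge \deg Z\ge 0$, as desired.

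The hard part is that this dualized map is nonzero exactly when $h$ meets the smooth locus of $D$, i.e. when $C\not\subset \mathrm{Sing}(D)$; the essential obstacle is therefore curves contained in the singular locus of $D$, where $\delta$ vanishes identically along $C$ and the naive quotient argument collapses. I expect to overcome this by working on the smooth surface $\tilde D$, where $g^{*}\mathcal{O}_X(D)$ nonetheless receives a generically surjective map from the nef bundle $g^{*}\mathcal{T}_X$, and then controlling the divisorial part of the degeneracy locus of this map in terms of the exceptional divisors of $g$ and the strict transform of $C$ (alternatively via an induction that strips off $\mathrm{Sing}(D)$). Making this analysis of the degeneracy locus precise, so as to guarantee the needed nonvanishing after restriction to $\tilde C$, is the main technical point of the proof.
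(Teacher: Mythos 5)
The paper does not prove this proposition at all; it is imported verbatim from \cite[Proposition 2.12]{cp91}, so there is no internal argument to compare yours against, and I assess your proposal against the standard proof in that reference.

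Your reduction to a prime divisor $D$ and to irreducible curves $C\subset D$, and your treatment of every curve $C\not\subset\mathrm{Sing}(D)$, are correct and are essentially the Campana--Peternell argument: differentiating a local equation of $D$ gives a map $\mathcal{T}_X|_D\to\mathcal{O}_X(D)|_D$ that is nonzero along such a $C$, its pullback to the normalization $\sigma\colon\tilde C\to C$ has image $\sigma^{*}\mathcal{O}_X(D)(-Z)$ with $Z\geq 0$, and a nonzero quotient of a nef bundle on a smooth curve has nonnegative degree. The genuine gap is the one you flag yourself: curves $C\subset\mathrm{Sing}(D)$ (for a threefold, $\mathrm{Sing}(D)$ can certainly contain curves, so this case cannot be avoided). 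Moreover, the repair you sketch does not close it. On a resolution $g\colon\tilde D\to D$ the map $g^{*}\mathcal{T}_X\to g^{*}\mathcal{O}_X(D)$ is generically surjective, and saturating its image gives $g^{*}\mathcal{O}_X(D)(-F)$ nef for some effective divisor $F$ supported on $g^{-1}(\mathrm{Sing}(D))$; but the resulting bound $\deg_B g^{*}\mathcal{O}_X(D)\geq F\cdot B$ must be applied exactly to curves $B$ dominating $C$, and these are components of (or contained in) $\mathrm{Supp}(F)$. Since $g$-exceptional curves typically have negative self-intersection, $F\cdot B$ can be negative and the estimate is vacuous precisely in the case you need.

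The missing idea is to exploit the multiplicity $m=\mathrm{mult}_C(D)\geq 2$ of $D$ along $C$ rather than to resolve $D$. At the generic point of $C$ a local equation $f$ of $D$ lies in $I_C^{m}\setminus I_C^{m+1}$, and its leading form is a nonzero element of $S^{m}(I_C/I_C^{2})\otimes\mathcal{O}_X(D)$ there; composing with $I_C/I_C^{2}\hookrightarrow\Omega_X|_C$ and pulling back to the normalization $\tilde C$, one obtains a \emph{nonzero} map $\sigma^{*}S^{m}\mathcal{T}_X\to\sigma^{*}\mathcal{O}_X(D)$ (one must check this section has no poles on $\tilde C$, which is where the remaining care goes). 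Since symmetric powers of nef bundles are nef, the same degree argument as in your generically smooth case then gives $\deg\sigma^{*}\mathcal{O}_X(D)\geq 0$, i.e.\ $D\cdot C\geq 0$. With that ingredient supplied your outline becomes a complete proof; without it, the key case remains open.
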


The above proposition, 
together with Remark \ref{rmk on neg}, 
shows that there does not exist 
an effective divisor 
on a threefold with nef tangent bundle 
satisfying the inequality (\ref{neg}) 
in Lemma \ref{neg lem}. 
Furthermore, the above proposition 
also ensures the tilt-stability 
of line bundles: 

\begin{lem}[{\cite[Corollary 3.11]{bms16}}]
\label{tilt stab of lb}
Let $X$ be a smooth projective threefold, 
$\omega$ an ample $\mathbb{R}$-divisor on $X$. 
Assume that for every effective divisor $D$ on $X$, 
we have $\omega.D^2 \geq 0$. 
Then for every line bundle $L$ on $X$ and 
$B \in \NS(X)_{\mathbb{R}}$, 
$L$ or $L[1]$ is 
$\nu_{\omega, B}$-stable. 
\end{lem}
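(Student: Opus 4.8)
The plan is to reduce to $L=\mathcal{O}_X$ and then to rule out any tilt-destabilizing subobject by a numerical computation fed by the hypothesis $\omega.D^2\ge 0$.

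First I would use that tensoring by $L^{-1}$ is an autoequivalence of $D^b(X)$ sending $\ch^{B}$ to $\ch^{B-c_1(L)}$, hence inducing an exact equivalence $\Coh^{\omega,B}(X)\to\Coh^{\omega,B-c_1(L)}(X)$ that preserves both slope functions $\mu_{\omega,\bullet}$ and $\nu_{\omega,\bullet}$ and carries $L$ to $\mathcal{O}_X$. Thus it suffices to prove the statement for $L=\mathcal{O}_X$ and an arbitrary $B$. As a rank-one torsion-free sheaf $\mathcal{O}_X$ is $\mu_{\omega,B}$-stable, and according to the sign of $\mu_{\omega,B}(\mathcal{O}_X)=-(\omega^2.B)/\omega^3$ it lies in $\mathcal{T}_{\omega,B}$ or in $\mathcal{F}_{\omega,B}$; correspondingly $\mathcal{O}_X$ or $\mathcal{O}_X[1]$ is an object of $\Coh^{\omega,B}(X)$. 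Since the derived dual $\dR\mathcal{H}om(-,\mathcal{O}_X)$ exchanges these two regimes (replacing $B$ by $-B$ while fixing $\mathcal{O}_X$), I would carry out the argument in the case $\mathcal{O}_X\in\Coh^{\omega,B}(X)$ and deduce the other case by duality.

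Next, assuming $\mathcal{O}_X$ fails to be $\nu_{\omega,B}$-stable, I would invoke the existence of Harder--Narasimhan filtrations for tilt stability to produce a proper nonzero subobject $A\subset\mathcal{O}_X$ in $\Coh^{\omega,B}(X)$ with $\nu_{\omega,B}(A)\ge\nu_{\omega,B}(\mathcal{O}_X)$, giving a short exact sequence $0\to A\to\mathcal{O}_X\to Q\to 0$. Taking the long exact sequence of cohomology sheaves for the standard heart and using that $\mathcal{O}_X$ sits in degree zero, I get $H^{-1}(A)=0$, so $A$ is a sheaf; the torsion-free sheaf $H^{-1}(Q)\in\mathcal{F}_{\omega,B}$ together with the rank-one constraint coming from $\mathcal{O}_X$ pins down the shape of $A$: its image in $\mathcal{O}_X$ is an ideal sheaf, so the only codimension-one datum entering the Chern characters is an effective divisor $D$, namely the divisorial part of the support of the torsion quotient. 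Consequently $v^B_\bullet(A)=v^B_\bullet(\mathcal{O}_X)-v^B_\bullet(Q)$ is expressed through $\omega^3$, $\omega^2.B$, $\omega^2.D$, $\omega.B^2$, $\omega.B.D$, and $\omega.D^2$.

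Finally I would turn the destabilizing inequality $\nu_{\omega,B}(A)\ge\nu_{\omega,B}(\mathcal{O}_X)$, together with the membership constraints $v^B_1(A),v^B_1(Q)\ge 0$, into a polynomial inequality in these intersection numbers and show it cannot hold. Here the hypothesis $\omega.D^2\ge 0$ for the effective divisor $D$ enters decisively, and combined with the Hodge-index inequalities on $X$ and on a smooth member of $|\omega|$ recalled in Remark \ref{rmk on neg} it forces the reverse strict inequality, contradicting semistability of the destabilizer. I expect the main obstacle to be exactly this last step: arranging the intersection-number estimate so that the single positivity $\omega.D^2\ge 0$ suffices to dominate the mixed terms, and simultaneously excluding the higher-rank contributions of $H^{-1}(Q)$ that are invisible in the surface case but genuinely possible on a threefold.
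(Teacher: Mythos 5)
First, a point of reference: the paper does not prove this lemma at all --- it is quoted verbatim from \cite[Corollary 3.11]{bms16}, so there is no internal proof to compare against. Judged on its own terms, your outline follows the right general strategy (reduce to $L=\mathcal{O}_X$ by tensoring, use the derived dual to swap the $\mathcal{T}/\mathcal{F}$ cases, take a destabilizing short exact sequence $0\to A\to\mathcal{O}_X\to Q\to 0$ in the tilted heart, observe $H^{-1}(A)=0$ so that $A$ is a sheaf whose image in $\mathcal{O}_X$ is a twisted ideal sheaf), but it breaks down at the decisive step, and you in fact name the breaking point yourself without repairing it.

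The gap is the claim that $v^B_\bullet(A)$ ``is expressed through'' intersection numbers of $\omega$, $B$ and a single effective divisor $D$. This is false as stated: $A$ sits in an extension $0\to H^{-1}(Q)\to A\to I_Z(-D)\to 0$, and the only a priori constraint on the torsion-free sheaf $H^{-1}(Q)\in\mathcal{F}_{\omega,B}$ is the single linear condition $\mu^{\max}_{\omega,B}(H^{-1}(Q))\le 0$; its rank, $\ch_1$ and $\ch_2$ are otherwise unconstrained and enter $v^B_1(A)$ and $v^B_2(A)$ in an essential way. Consequently the ``polynomial inequality in intersection numbers'' you want to contradict cannot even be written down from the data you have assembled, and the hypothesis $\omega.D^2\ge 0$ has nothing to act on. The missing input --- and the actual engine of the proof in \cite{bms16} --- is the Bogomolov--Gieseker inequality for tilt-semistable objects together with the wall-structure statement that every Harder--Narasimhan (or Jordan--H\"older) factor $E_i$ of a destabilized object satisfies $0\le\overline{\Delta}(E_i)\le\overline{\Delta}(E)$. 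Applied with $\overline{\Delta}(\mathcal{O}_X)=0$ (after reducing to the case where $B$ is handled by this discriminant), this forces all factors to have vanishing discriminant, which is what rigidifies the numerical classes of $A$ and $Q$ down to the divisorial datum $D$ and lets the positivity $\omega.D^2\ge 0$ (via Hodge index, as in Remark \ref{rmk on neg}) produce the contradiction. Without invoking that machinery, the ``higher-rank contributions of $H^{-1}(Q)$'' you flag at the end are not excluded, and the argument does not close.
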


Next we recall 
the classification theorem of 
threefolds with nef tangent bundles 
due to the paper \cite{cp91}. 

\begin{thm}[{\cite[Theorem 10.1]{cp91}}]
\label{cp}
Let $X$ be a smooth projective threefold 
with nef tangent bundle. 
Then there exists an {\' e}tale covering 
$f \colon \widetilde{X} \to X$ such that 
$\widetilde{X}$ is one of the following: 
\begin{enumerate}
\item $\mathbb{P}^3$. 
\item a three dimensional smooth quadric. 
\item $\mathbb{P}^1 \times \mathbb{P}^2$. 
\item $\mathbb{P}^1 \times \mathbb{P}^1 \times \mathbb{P}^1$. 
\item $\mathbb{P}({\mathcal{T}_{\mathbb{P}^2}})$. 
\item $\mathbb{P}_{A}(\mathcal{E})$, 
where $A$ is an Abelian surface and 
$\mathcal{E}$ is a rank two vector bundle 
obtained as an extension of two line bundles 
in $\Pic^0(A)$. 
\item $\mathbb{P}_{C}(\mathcal{E})$, 
where $C$ is an elliptic curve and 
$\mathcal{E}$ is a rank three vector bundle 
obtained as extensions of three line bundles 
of degree zero. 
\item $\mathbb{P}_{C}(\mathcal{E}_{1}) \times_{C} \mathbb{P}_{C}(\mathcal{E}_{2})$, 
where $C$ is an elliptic curve and 
$\mathcal{E}_{i}$ are rank two vector bundles 
obtained as extensions of degree zero line bundles. 
\item an Abelian threefold. 
\end{enumerate}
\end{thm}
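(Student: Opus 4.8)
The plan is to reduce the classification to the Fano case by analysing $X$ through its Albanese morphism together with the Mori-theoretic structure of its extremal contractions, and then identifying the resulting bundle data in each fibre dimension.

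\textbf{Step 1: General structure.} First I would record the elementary consequences of nefness of $\mathcal{T}_X$: since $-K_X = \det \mathcal{T}_X$, the anticanonical class is nef, and by Proposition \ref{eff nef} every effective divisor on $X$ is nef, so one has strong positivity constraints to exploit. The technical heart of the argument is the claim that every contraction $f \colon X \to Y$ of a $K_X$-negative extremal ray is a \emph{smooth} morphism onto a smooth projective variety $Y$, and that both the fibres of $f$ and the base $Y$ again carry nef tangent bundles. I would prove this using Mori's theory of rational curves: bend-and-break produces rational curves generating the extremal ray, and the nef tangent hypothesis forces $\mathcal{T}_X$ to be semipositive along such curves. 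Via the deformation theory of minimal rational curves, this positivity rules out singular fibres and jumping of the fibre dimension, so $f$ is a submersion; the relative Euler-type sequences then transport nefness to $\mathcal{T}_Y$ and to the fibres.

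\textbf{Step 2: Albanese reduction.} Applying Step 1, I would show that the Albanese morphism $\alpha \colon X \to \mathrm{Alb}(X)$ is a smooth surjective fibration whose fibres are Fano manifolds with nef tangent bundle, and classify according to $a := \dim \mathrm{Alb}(X)$. If $a = 3$ then $\alpha$ is finite {\'e}tale, so $X$ is an abelian threefold up to {\'e}tale cover, giving case (9). If $a = 2$ the fibres are one-dimensional Fano manifolds, hence $\mathbb{P}^1$, so $X = \mathbb{P}_A(\mathcal{E})$ over an abelian surface $A$. If $a = 1$ the fibres are two-dimensional Fano manifolds with nef tangent bundle, hence $\mathbb{P}^2$ or $\mathbb{P}^1 \times \mathbb{P}^1$, yielding either a $\mathbb{P}^2$-bundle or a fibre product of two $\mathbb{P}^1$-bundles over an elliptic curve. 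If $a = 0$ then $b_1(X) = 0$, and the structure theory shows $X$ is a Fano threefold.

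\textbf{Step 3: Fano case and bundle identification.} For $a = 0$ I would classify Fano threefolds with nef tangent bundle by Mori theory. When $\rho(X) = 1$ the only possibilities are $\mathbb{P}^3$ and the smooth quadric, giving cases (1) and (2); when $\rho(X) \geq 2$ I would decompose $X$ via the smooth extremal contractions of Step 1, which are $\mathbb{P}^1$- or $\mathbb{P}^2$-bundle structures, and assemble them to recover $\mathbb{P}^1 \times \mathbb{P}^2$, $\mathbb{P}^1 \times \mathbb{P}^1 \times \mathbb{P}^1$, and the flag variety $\mathbb{P}(\mathcal{T}_{\mathbb{P}^2})$, giving cases (3), (4), (5). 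In the positive-Albanese cases I would then pin down the projectivised bundles: after passing to a suitable {\'e}tale cover of the base, the nef tangent condition, read through the relative Euler sequence and a Harder-Narasimhan argument on the abelian or elliptic base, forces the defining bundle to be an iterated extension of line bundles in $\Pic^0$ of the base, which produces the precise descriptions in cases (6), (7), (8).

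\textbf{Main obstacle.} The decisive difficulty is Step 1: establishing that extremal contractions of a variety with nef tangent bundle are everywhere-smooth morphisms with nef-tangent fibres and base. Controlling the deformation theory of the minimal rational curves generating each extremal ray — and in particular ruling out jumping of the fibre dimension — is exactly where the nef tangent hypothesis must be exploited most delicately, and it is the step on which the entire inductive classification rests.
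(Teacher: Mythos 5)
This statement is not proved in the paper at all: it is quoted verbatim from Campana--Peternell (\cite[Theorem 10.1]{cp91}), and the only place the paper touches its proof is in Lemma \ref{gal}, where it extracts from \cite{cp91} the existence of the diagram realizing $\widetilde{X}$ as a base change of $X$ along \'etale covers of an Abelian variety. So the comparison has to be made with the original argument of \cite{cp91} rather than with anything in this paper. Measured against that, your outline reproduces the correct global architecture (nefness of $-K_X$ and of effective divisors, smoothness of extremal contractions, Albanese reduction to an abelian base with Fano fibres, classification of Fano threefolds with nef tangent bundle, identification of the bundles as iterated extensions of degree-zero line bundles after an \'etale cover). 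Your Step 2 in particular matches what the paper itself records in the proof of Lemma \ref{gal}.

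The genuine gap is in Step 1, and you have correctly located it but not supplied a workable argument for it. Proving in the abstract, via bend-and-break and deformation theory of minimal rational curves, that every $K_X$-negative contraction of a manifold with nef tangent bundle is a smooth fibration with nef-tangent fibres and base is essentially the general Campana--Peternell programme, which was not available in 1991 and is much harder than the threefold case requires. What \cite{cp91} actually do in dimension three is lean on Mori's explicit classification of extremal contractions of smooth threefolds: divisorial contractions are excluded because the exceptional divisor would violate the nefness of effective divisors (Proposition \ref{eff nef}); conic bundles with singular fibres and del Pezzo fibrations whose fibres contain $(-1)$-curves are excluded by direct normal-bundle computations (e.g.\ for a $(-1)$-curve $\ell$ in a fibre one gets $N_{\ell/X}\cong\mathcal{O}\oplus\mathcal{O}(-1)$, so $\mathcal{T}_X|_{\ell}$ has a degree $-1$ quotient). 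Two further points you pass over too quickly: a subbundle of a nef bundle need not be nef, so the fibres inheriting nef tangent bundles is not formal and is instead obtained from this case analysis; and the dichotomy ``$q(\widetilde X)>0$ after an \'etale cover, or $X$ is Fano'' in the case $a=0$, as well as the vanishing of the Brauer obstruction needed to write the conic bundle as $\mathbb{P}_A(\mathcal{E})$, both require separate arguments. None of this invalidates your plan, but as written Step 1 is a restatement of the difficulty rather than a proof of it.
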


For our purpose, we need the following observation: 

\begin{lem}
\label{gal}
In Theorem \ref{cp}, we can choose 
an {\' e}tale covering $f$ to be a Galois covering. 
\end{lem}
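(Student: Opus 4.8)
The plan is to take the {\'e}tale covering $f \colon \widetilde{X} \to X$ produced by Theorem \ref{cp} and replace it by its Galois closure. Recall that any connected finite {\'e}tale covering admits a Galois closure: there is a connected finite {\'e}tale \emph{Galois} covering $g \colon \widehat{X} \to X$ together with a factorization $\widehat{X} \xrightarrow{\,h\,} \widetilde{X} \xrightarrow{\,f\,} X$, and since $f$ and $g$ are both {\'e}tale, so is $h$. Thus $h \colon \widehat{X} \to \widetilde{X}$ is itself a connected finite {\'e}tale covering of one of the varieties $(1)$--$(9)$ in Theorem \ref{cp}. As $g$ is already Galois over $X$, the lemma follows once I verify that $\widehat{X}$ is again one of the varieties in the list; in other words, it suffices to show that the class of varieties $(1)$--$(9)$ is stable under connected finite {\'e}tale coverings.

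I would organize this verification according to the type of the variety. For cases $(1)$--$(5)$ each variety is simply connected: $(1)$--$(4)$ are products of projective spaces or a rational homogeneous space, and $\mathbb{P}(\mathcal{T}_{\mathbb{P}^2})$ is a $\mathbb{P}^1$-bundle over the simply connected $\mathbb{P}^2$. Hence any connected finite {\'e}tale cover $h$ is an isomorphism, so $\widehat{X} \cong \widetilde{X}$ and $f$ was already Galois. For case $(9)$, a connected finite {\'e}tale cover of an Abelian threefold is again an Abelian threefold, so $\widehat{X}$ stays in the list.

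The remaining cases $(6)$--$(8)$ are the projectivizations over an Abelian surface or an elliptic curve, and here I would use the bundle structure. In each case the fibre of $\widetilde{X} \to S$ (with $S$ the Abelian surface or the elliptic curve) is a product of projective spaces, hence connected and simply connected; the homotopy exact sequence then gives $\pi_1(\widetilde{X}) \cong \pi_1(S)$. Consequently every connected finite {\'e}tale covering $h \colon \widehat{X} \to \widetilde{X}$ is pulled back from an isogeny $\phi \colon S' \to S$, that is $\widehat{X} \cong \widetilde{X} \times_S S'$, which is again a projectivization of the same shape over $S'$ with bundle $\phi^{\ast}\mathcal{E}$ (respectively $\phi^{\ast}\mathcal{E}_1$ and $\phi^{\ast}\mathcal{E}_2$). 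Since $S'$ is once more an Abelian surface (respectively an elliptic curve), the pullback of a degree-zero line bundle along $\phi$ is again of degree zero, and pulling back preserves the defining extensions; hence $\phi^{\ast}\mathcal{E}$ is again an iterated extension of degree-zero line bundles of the required rank. Therefore $\widehat{X}$ is again a variety in the list, which completes the argument.

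The main obstacle is this last verification for cases $(6)$--$(8)$: one must confirm both that every connected finite {\'e}tale cover descends to an isogeny of the base, via the computation $\pi_1(\widetilde{X}) \cong \pi_1(S)$ using the simply connected fibre, and that the specific bundle data---being an iterated extension of degree-zero line bundles of a prescribed rank---is preserved under pullback along the isogeny. The other cases are either immediate from simple connectedness or from the standard fact that a connected finite {\'e}tale cover of an Abelian variety is an Abelian variety.
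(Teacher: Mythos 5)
Your proof is correct, but it takes a genuinely different route from the paper's. The paper does not take the Galois closure of $f$ itself: it goes back into the proof of the Campana--Peternell classification and uses the fact that the covering $\widetilde{X}\to X$ there is constructed as a base change $\widetilde{X}=\widetilde{Y}\times_Y X$ along a composite of \'etale coverings $\widetilde{Y}\to Y'\to Y$ with $Y'$ an Abelian variety; it then takes the Galois closure $\widehat{Y}\to\widetilde{Y}$ of that composite \emph{on the base}, observes that $\widehat{Y}$ is again an Abelian variety (or a point), and base-changes back up to get $\widehat{X}=\widehat{Y}\times_Y X$, which is automatically of the same type (1)--(9) and Galois over $X$. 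You instead take the Galois closure of $f$ directly and then prove the separate statement that the list (1)--(9) is closed under connected finite \'etale coverings: trivially in cases (1)--(5) by simple connectedness, by the standard fact for Abelian threefolds in case (9), and in cases (6)--(8) via the homotopy exact sequence $\pi_1(\widetilde{X})\cong\pi_1(S)$ (simply connected fibres) so that every cover is pulled back from an isogeny $S'\to S$, under which $\Pic^0$ and the defining extensions are preserved. What each approach buys: the paper's argument avoids any $\pi_1$ computation but requires re-opening the proof of \cite{cp91} to extract the descent of the covering to $Y$; yours uses only the \emph{statement} of the classification and general covering-space theory, at the cost of the closure-under-covers verification --- which is, however, a clean and slightly more general fact worth having. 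Both are complete proofs of the lemma.
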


\begin{proof}
Let $X$ be a smooth projective threefold 
with nef tangent bundle. 
In the proof of \cite[Theorem 10.1]{cp91}, 
they actually show the existence of 
the following diagram of smooth projective varieties: 
\[
\xymatrix{
&\widetilde{X}:=\widetilde{Y} \times_{Y} X \ar[rr]^{f} \ar[d] & &X \ar[d] \\
&\widetilde{Y} \ar[r]_{\psi} &Y' \ar[r]_{\phi} &Y, 
}
\] 
where $Y'$ is an Abelian variety 
(possibly of dimension zero), 
$\psi$ and $\phi$ are {\' e}tale coverings. 
Note that the morphism $\widetilde{X} \to \widetilde{Y}$ 
is as (1) -- (9) in Theorem \ref{cp}, i.e., 
$\widetilde{Y}$ is $\Spec\mathbb{C}$, $A$, $C$, or an Abelian threefold 
in the notation of Theorem \ref{cp}. 

Put $g:=\phi \circ \psi$. 
Let us take the Galois closure of $g$, i.e. 
an {\' e}tale covering 
$h \colon \widehat{Y} \to \widetilde{Y}$ 
such that the morphism 
$h \circ g \colon \widehat{Y} \to Y$ 
is an {\' e}tale Galois covering. 
Note that since $\widetilde{Y}$ 
is an Abelian variety, so is $\widehat{Y}$. 
Hence the base change 
$\widehat{X}:=\widehat{Y} \times_{Y} X$ 
is again one of the threefolds in Theorem \ref{cp} (1) -- (9), 
and is an {\' e}tale Galois covering of $X$. 
This completes the proof. 
\end{proof}

\begin{rmk}
Among threefolds in Theorem \ref{cp}, 
Conjecture \ref{bms} is known to be true 
in the following cases: 
\begin{itemize}
\item $\mathbb{P}^3$ by \cite{bmt14a, mac14}. 
\item a three dimensional smooth quadric by \cite{sch14}. 
\item $\mathbb{P}^1 \times \mathbb{P}^2, 
\mathbb{P}^1 \times \mathbb{P}^1 \times \mathbb{P}^1$ 
with any choice of a class $B+i\omega$ 
by \cite{bmsz17} 
(In the paper \cite{bmsz17}, they only treat 
the case when $B$ and $\omega$ are proportional. 
Even when they are not proportional, 
the same proof works according to the formulation 
given in Conjecture \ref{bms}). 
\item $\mathbb{P}({\mathcal{T}_{\mathbb{P}^2}})$ 
with $B$ and $\omega$ being proportional to 
the anti-canonical class 
by \cite{bmsz17}. 
\item an Abelian threefold 
with any choice of a class $B+i\omega$ 
by \cite{bms16, mp16a, mp16b}. 
\end{itemize}
\end{rmk}

The following is our first main result, 
which completely solves Conjecture \ref{bms} 
for threefolds as in Theorem \ref{cp} (6) -- (8): 

\begin{thm}
\label{first thm}
Let $X$ be a threefold as in 
Theorem \ref{cp} (6), (7), or (8). 
Then for every class 
$B+i\omega \in \NS(X)_{\mathbb{C}}$ 
with $\omega$ ample, 
Conjecture \ref{bms} holds. 
\end{thm}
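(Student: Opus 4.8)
The plan is to establish Conjecture \ref{bms} for the three families (6), (7), (8) in a uniform way, using the fact that in each case the defining bundle $\mathcal{E}$ (resp.\ the pair $\mathcal{E}_1,\mathcal{E}_2$) is an iterated extension of line bundles of degree zero. I would prove the inequality in two stages: first degenerate the given threefold $X=\mathbb{P}(\mathcal{E})$ to the projectivization of the associated \emph{split} bundle and transfer the inequality by the deformation method of \cite{bay}, and then prove the inequality for the split bundle directly by means of good finite morphisms coming from isogenies of the Abelian base, as in \cite{kos17}. Throughout I would keep the class $B+i\omega$ arbitrary, since neither construction is sensitive to whether $B$ and $\omega$ are proportional; this is precisely the gain over \cite{bmsz17}.

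First I would construct the degeneration. Writing $\mathcal{E}$ as an extension with class $e$, I would rescale $e$ by the coordinate $t\in\mathbb{A}^1$ to produce a vector bundle $\widetilde{\mathcal{E}}$ on $A\times\mathbb{A}^1$ (resp.\ on $C\times\mathbb{A}^1$) whose projectivization $\mathcal{X}\to\mathbb{A}^1$ is smooth, has central fibre $\mathcal{X}_0=\mathbb{P}(\bigoplus_i L_i)$ the split projectivization, and has every fibre over $t\neq 0$ isomorphic to $X$ (rescaling an extension class by a nonzero scalar does not change the isomorphism type of the projective bundle). The N\'eron--Severi lattice is constant along this family, being that of the base together with the relative hyperplane class, so a given $B+i\omega\in\NS(X)_{\mathbb{C}}$ extends over $\mathbb{A}^1$. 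I would then apply the reduction theorem of Bayer et al.\ \cite{bay}, packaged here as Proposition \ref{degenerate}, to reduce Conjecture \ref{bms} for $X$ to Conjecture \ref{bms} for the split threefold $\mathcal{X}_0$.

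Next I would treat the split case, say $\mathcal{X}_0=\mathbb{P}_{A}(\bigoplus_i L_i)$ over the Abelian surface $A$ with all $L_i\in\Pic^0(A)$, cases (7) and (8) being analogous over the elliptic curve $C$ (and, in (8), over the fibre product). Here I would use the multiplication-by-$n$ isogeny $[n]\colon A\to A$: since each $L_i$ has degree zero, $[n]^{*}\big(\bigoplus_i L_i\big)=\bigoplus_i L_i^{\otimes n}$ is again a sum of degree-zero line bundles, so $[n]$ induces a finite morphism $\Phi_n$ onto $\mathcal{X}_0$ from a threefold of the same type. Given a $\bar{\beta}$-stable object $E$ on $\mathcal{X}_0$, I would pull it back along $\Phi_n$, using Lemma \ref{tilt stab of lb} and the tilt-stability of line bundles to control the stability of $\Phi_n^{*}E$ for the pulled-back polarization, track how $\ch^{B+\bar{\beta}(E)\omega}_{3}$ transforms under $\Phi_n$, and run the limiting argument of \cite{kos17} as $n\to\infty$, reducing the desired bound to the product cases $\mathbb{P}^k\times(\text{Abelian})$ already established there and thereby obtaining $\ch^{B+\bar{\beta}(E)\omega}_{3}(E)\leq 0$.

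The main obstacle I expect is the first stage: producing a family whose total space is genuinely smooth and whose general fibres are honestly isomorphic to $X$, and then verifying that the hypotheses of \cite{bay} are met for the class $B+i\omega$ as it varies over $\mathbb{A}^1$. The rescaling-of-the-extension-class construction is what keeps the general fibres constant, and it relies essentially on the degree-zero (i.e.\ $\Pic^0$) hypothesis; checking smoothness of $\mathcal{X}$ together with the deformation-invariance of $\bar{\beta}$-stability is the delicate point. In the split stage the technical check is that $\Phi_n^{*}$ preserves $\bar{\beta}$-semistability and scales the relevant invariants so that the limit argument closes, but this should follow closely the method of \cite{kos17} once the morphisms $\Phi_n$ are in hand.
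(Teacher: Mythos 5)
Your two-stage strategy --- degenerate to the split bundle via a family over $\mathbb{A}^1\subset\Ext^1$ obtained by scaling the extension class, apply the reduction of \cite{bay}, then handle the split case by finite morphisms and the limiting argument of \cite{kos17} --- is exactly the strategy of the paper, and your first stage is essentially Proposition \ref{degenerate} verbatim (the paper builds the family via the elementary modification $0\to\mathcal{U}\to q^*\mathcal{E}\to i_*\mathcal{E}\to 0$ as in \cite[Lemma 4.1.2]{hl97} and checks $f$-ampleness of the extended polarization using Lemma \ref{ns}).

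There is, however, a genuine gap in your split stage. The morphism you propose is induced solely by the isogeny $[n]\colon A\to A$ (using $[n]^*L\cong L^{n}$ for $L\in\Pic^0(A)$); this is the \'etale base change $h_{n}\colon X^{(n)}=\mathbb{P}(\mathcal{O}\oplus L^{n})\to X$, whose pullback fixes the relative hyperplane class and scales only classes coming from the base. Consequently $h_{n}^*$ does \emph{not} act on $H^{2i}$ by $n^{2i}$, the Euler characteristic $\chi\bigl(\mathcal{O},h_{n}^*E\otimes(\cdots)\bigr)$ does not isolate $\ch_3^{\overline{B}}(E)$ as a dominant $n^6$-term against $O(n^4)$ errors, and the limiting argument of \cite{kos17} and \cite[\S 7]{bms16} does not close. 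The essential point of the paper's Proposition \ref{endo} is to compose the base change with a \emph{fibrewise toric Frobenius} $g_m$, which exists globally only after replacing $L$ by an $m$-th root (whence the auxiliary threefold $X^{(\frac{1}{m})}$); the composite $F_m$ then acts on $H^{2*}$ by $(x,y,z,w)\mapsto(x,m^2y,m^4z,m^6w)$ even though it is not an endomorphism of $X$, and Remark \ref{nonendo} explains precisely why the naive endomorphism coming from $\Sym^m(\mathcal{O}\oplus L)$ is not polarized and cannot be used. A second ingredient you do not anticipate is Proposition \ref{toric split}: verifying the $\Hom$- and $\ext^2$-vanishing hypotheses of Proposition \ref{approx ext} requires showing that $g_{m*}$ of a line bundle splits into explicitly described line bundles, which is a nontrivial step (the obstruction groups $H^1(A,L^{\eta/m})$ need not vanish when $L$ is torsion, and the paper disposes of that case by a separate base-change argument).
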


As a corollary, we obtain: 

\begin{cor} 
\label{first cor}
Let $X$ be a smooth projective threefold with 
nef tangent bundle. 
Then there exist Bridgeland stability conditions 
on $D^b(X)$. 
\end{cor}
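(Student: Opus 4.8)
The plan is to deduce everything from Conjecture \ref{bms} by combining the classification of Theorem \ref{cp}, the reduction theorems, and a descent of the inequality along the {\'e}tale covering furnished by Lemma \ref{gal}. Concretely, I would establish that Conjecture \ref{bms} holds for $X$ for a suitable class $B+i\omega$, and then invoke \cite[Theorem 3.20]{pt15} together with the implication ``Conjecture \ref{original} $\Rightarrow$ Conjecture \ref{bmt}'' of \cite{pt15} to produce the stability condition $(Z_{\omega,B}, \mathcal{A}_{\omega,B})$ on $D^b(X)$.

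First I would apply Theorem \ref{cp} and Lemma \ref{gal} to fix a finite {\'e}tale Galois covering $f\colon \widetilde{X}\to X$ of some degree $d$, with $\widetilde{X}$ one of the models (1)--(9). If $\widetilde{X}$ is of type (1)--(5), then $\widetilde{X}$ is rational, so $\chi(\mathcal{O}_{\widetilde{X}})=1$; since the holomorphic Euler characteristic is multiplicative in finite {\'e}tale covers, the relation $1=\chi(\mathcal{O}_{\widetilde{X}})=d\cdot\chi(\mathcal{O}_X)$ forces $d=1$ and $X=\widetilde{X}$, so the existence of stability conditions is one of the already-known cases recorded in the remark following Lemma \ref{gal} (and in Section \ref{remaining} for $\mathbb{P}(\mathcal{T}_{\mathbb{P}^2})$). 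The remaining, genuine situation is when $\widetilde{X}$ is of type (6)--(9); here Conjecture \ref{bms} is known on $\widetilde{X}$ for \emph{every} class, by Theorem \ref{first thm} for (6)--(8) and by \cite{bms16, mp16a, mp16b} for the Abelian threefold.

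The heart of the proof is the descent of Conjecture \ref{bms} along $f$. Fix a class $B+i\omega$ on $X$ with $\omega$ ample; then $f^*\omega$ is again ample. For a $\bar{\beta}$-stable object $E$ on $X$ I would pull it back and use that $f$ is finite {\'e}tale to conclude that $f^*E$ is $\bar{\beta}$-semistable on $\widetilde{X}$ with respect to $(f^*\omega, f^*B)$; this is the analogue of the ``good finite morphism'' argument of \cite{kos17}. Since every $v^B_i$ is a top intersection number on the threefold, the projection formula gives $v^{f^*B}_i(f^*E)=d\,v^B_i(E)$, so $\overline{\Delta}_{f^*\omega,f^*B}$ scales by $d^2$ and, crucially, $\bar{\beta}(f^*E)=\bar{\beta}(E)$. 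The Jordan--H{\"o}lder factors of $f^*E$ are then $\bar{\beta}$-stable of the common slope $\bar{\beta}(E)$, so Conjecture \ref{bms} on $\widetilde{X}$ gives $\ch^{f^*B+\bar{\beta}(E)f^*\omega}_3\le 0$ for each factor; summing, and using the additivity of $\ch_3$ together with $\ch^{f^*B+\bar{\beta}(E)f^*\omega}_3(f^*E)=d\,\ch^{B+\bar{\beta}(E)\omega}_3(E)$, yields $\ch^{B+\bar{\beta}(E)\omega}_3(E)\le 0$. Hence Conjecture \ref{bms} holds on $X$ for every such class, and the stability condition is produced as above.

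I expect the descent step to be the main obstacle: one must verify carefully that $f^*$ sends the double-tilted heart of $X$ into that of $\widetilde{X}$ and preserves $\bar{\beta}$-semistability, so that the Jordan--H{\"o}lder factors over $\widetilde{X}$ genuinely all carry the slope $\bar{\beta}(E)$; granting this, the multiplicativity $v^{f^*B}_i=d\,v^B_i$ and the additivity of $\ch_3$ are routine. The $\chi(\mathcal{O})$ observation is what cleanly disposes of the awkward model (5), on which Conjecture \ref{bms} is so far known only for classes proportional to the anticanonical one, since that model can only cover itself.
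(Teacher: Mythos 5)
Your proposal is correct and follows essentially the same route as the paper: pass to the \'etale Galois covering furnished by Lemma \ref{gal}, which lands in the list of Theorem \ref{cp}, and combine Theorem \ref{first thm} with the previously known cases. The descent of the inequality along the covering, which you work out by hand (together with your $\chi(\mathcal{O})$ observation disposing of the simply connected models (1)--(5)), is precisely what the paper outsources to the citation of \cite[Proposition 6.1]{bms16}.
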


\begin{proof}
By \cite[Proposition 6.1]{bms16}, 
we may replace $X$ by an {\' e}tale Galois covering, 
thus we can assume it is one of the threefolds 
in Theorem \ref{cp} 
(see Lemma \ref{gal}). 
Then Theorem \ref{first thm}, 
together with the previous works 
\cite{bms16, bmsz17, mp16a, mp16b, mac14, sch14}, 
proves the required statement. 
\end{proof} 

We will also have the following result for 
$X=\mathbb{P}(\mathcal{T}_{\mathbb{P}^2})$: 

\begin{thm}
\label{second thm}
Let $X=\mathbb{P}(\mathcal{T}_{\mathbb{P}^2})$, 
$H$ be an ample divisor on $X$. 
Let $\alpha >\frac{1}{2\sqrt{3}}$ and $\beta \in \mathbb{R}$ be 
real numbers. 
Then Conjecture \ref{original} holds for 
$(X, \alpha H, \beta H)$. 

\end{thm}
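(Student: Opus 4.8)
The plan is to use the reduction of Theorem \ref{li red} to pass to the small-$\alpha$, $B=0$ case, and then to exploit the structure of $X=\mathbb{P}(\mathcal{T}_{\mathbb{P}^2})$ as a $\mathbb{P}^1$-bundle over $\mathbb{P}^2$ together with a full exceptional collection. First, by Theorem \ref{li red} it suffices to produce a real number $\alpha_0>0$ such that Conjecture \ref{original} holds for $(X,\alpha H, 0)$ for every $0<\alpha<\alpha_0$; the claimed range $\alpha>\frac{1}{2\sqrt{3}}$, $\beta\in\mathbb{R}$ then follows immediately. So I fix $B=0$ and let $\alpha\to 0^{+}$. For any $\nu_{\alpha H,0}$-semistable object $E$ the classical Bogomolov--Gieseker inequality $\overline{\Delta}_{\alpha H,0}(E)\ge 0$ is already available from \cite{bmt14a}, and for objects of the tilted heart one has $v^B_{1}(E)=\alpha^2 H^2.\ch_1(E)\ge 0$. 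Hence the desired inequality $\overline{\Delta}_{\alpha H,0}(E)+6\,\overline{\nabla}_{\alpha H,0}(E)\ge 0$ reduces to an upper bound on $\overline{\nabla}_{\alpha H,0}(E)=2v^B_{2}(E)^2-3v^B_{1}(E)v^B_{3}(E)$, that is, to an upper bound on $\ch_3(E)=v^B_3(E)$ in terms of the lower Chern classes.

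Next I set up the homological input. Writing $\pi\colon X\to\mathbb{P}^2$ for the bundle projection, $h:=\pi^{*}c_1(\mathcal{O}_{\mathbb{P}^2}(1))$ and $\xi$ for the relative hyperplane class of $\mathcal{O}_{\pi}(1)$, Orlov's projective-bundle theorem applied to the Beilinson collection $\langle\mathcal{O},\mathcal{O}(1),\mathcal{O}(2)\rangle$ on $\mathbb{P}^2$ produces a full exceptional collection of line bundles $E_1,\dots,E_6$ on $D^b(X)$, namely
\[
\mathcal{O}_X,\ \mathcal{O}_X(h),\ \mathcal{O}_X(2h),\ \mathcal{O}_X(\xi),\ \mathcal{O}_X(h+\xi),\ \mathcal{O}_X(2h+\xi).
\]
Because $X$ has nef tangent bundle, every effective divisor is nef (Proposition \ref{eff nef}), so $\omega.D^2\ge 0$ for all effective $D$ and $\omega=\alpha H$; Lemma \ref{tilt stab of lb} then guarantees that each $E_i$, or its shift $E_i[1]$, is $\nu_{\alpha H,0}$-stable.

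The core of the argument is to bound $\ch_3(E)$ by testing $E$ against the $E_i$. For small $\alpha$ the tilt-slopes $\nu_{\alpha H,0}(E_i)$ are linearly ordered and sweep out the relevant range, so a given $\nu_{\alpha H,0}$-stable object $E$ sits between two consecutive members of the collection. Comparing slopes and using that both $E$ and the $E_i$ are tilt-stable, together with Serre duality, forces the vanishing of the extreme groups $\Hom^{k}(E_i,E)$ and $\Hom^{k}(E,E_i)$, leaving nonzero cohomology only in a narrow range of degrees $k$. Substituting these vanishings into the Euler characteristics $\chi(E_i,E)$ --- each an affine-linear function of $\ch_3(E)$ computed by Hirzebruch--Riemann--Roch --- yields a system of numerical inequalities; a suitable positive combination of them, fed back into $\overline{\Delta}_{\alpha H,0}(E)\ge 0$, should produce exactly $\overline{\Delta}_{\alpha H,0}(E)+6\,\overline{\nabla}_{\alpha H,0}(E)\ge 0$, with equality realized by the exceptional objects themselves.

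The main obstacle is precisely the control of these $\Ext$-vanishings: the collection is not orthogonal, so there are nonzero maps among the $E_i$, and one must determine for each $i$ the single cohomological degree in which $\Hom^{\bullet}(E_i,E)$ can survive as $\alpha\to 0^{+}$, ruling out the ``middle'' contributions that would enter $\chi(E_i,E)$ with the wrong sign. A subsidiary point is to verify that the slope ordering of the $E_i$ genuinely covers the range of $\nu_{\alpha H,0}(E)$ for all small $\alpha$, so that every tilt-stable $E$ is caught between two members of the collection. Once these vanishing statements are established, the extraction of the final inequality is a finite Chern-character computation on $X$.
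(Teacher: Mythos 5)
Your reduction step is sound and matches the paper: by Theorem \ref{li red} (plus the symmetry of the two projections and the known case $a=b$ from \cite{bmsz17}) it suffices to prove Conjecture \ref{original} for $(X,\alpha H,0)$ with $\alpha$ small, and the tilt-stability of line bundles via Proposition \ref{eff nef} and Lemma \ref{tilt stab of lb} is exactly the input the paper also uses. The gap is in the core of your argument. You propose to bound $\ch_3(E)$ by computing $\chi(E_i,E)$ against the six line bundles of the Orlov collection and discarding all but one cohomological degree. But slope comparison between a tilt-stable line bundle and a tilt-stable object $E$, together with Serre duality, only kills $\Hom$ and $\Ext^2$ (equivalently $\Hom^{0}$ and $\Hom^{2}$ in the heart); it says nothing about $\Ext^1$, which enters $\chi(E_i,E)$ with the opposite sign and is precisely the ``middle contribution'' you flag as the main obstacle. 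No mechanism is offered to control it, and none is available from stability alone. Moreover, the six slopes $\nu_{\alpha H,0}(E_i)$ take finitely many values while $\nu_{\alpha H,0}(E)$ ranges over all of $(-\infty,+\infty]$, so the claim that every tilt-stable $E$ is ``caught between two consecutive members'' cannot hold as stated. As written, the proof does not close.

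The paper avoids this issue entirely by taking a different route through \cite[Proposition 8.1.1]{bmt14a} (Proposition \ref{aux}): it shifts the collection into an \emph{Ext}-exceptional one, so that its extension closure $\mathcal{C}$ is the heart of a bounded t-structure equivalent to modules over a quiver with relations, and then verifies only three finite checks --- that the generators lie in $\langle\mathcal{A}_{\alpha,0},\mathcal{A}_{\alpha,0}[1]\rangle$ (Lemma \ref{hearts}), that their central charges lie in a common half-plane (Lemma \ref{charge}, which is where the hypothesis $b>a$ and the explicit bound $\alpha_0$ enter), and that skyscraper sheaves are ``stable'' in $\mathcal{C}$ (Lemma \ref{st of str}). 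This yields the stability condition, hence Conjecture \ref{original}, for all small $\alpha$ without ever proving an $\Ext$-vanishing for an arbitrary tilt-stable object. If you want to salvage your approach you would need genuine vanishing theorems in the style of Li's work on Fano threefolds of Picard rank one; for a general polarization $ah_1+bh_2$ on $\mathbb{P}(\mathcal{T}_{\mathbb{P}^2})$ these are not established and would constitute the bulk of the work.
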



\section{Proof of Theorem \ref{first thm}}
\label{deform and endo}
In this section, 
we prove Theorem \ref{first thm}. 
We use the following terminology. 

\begin{defin}
Let $X$ be as in Theorem \ref{cp} (6) -- (8). 
Then $X$ is {\it split} if 
the vector bundles defining $X$ 
are direct sums of line bundles. 
\end{defin}


\subsection{Reduction to split cases}
In this subsection, we reduce 
Theorem \ref{first thm} to the split cases. 
The key method is the following result 
due to the paper \cite{bay}. 

\begin{prop}[{\cite[Proposition 27.1]{bay}}]
\label{family}
Let $f \colon \mathcal{X} \to D$ 
be a smooth projective family of threefolds 
over a smooth curve $D$ 
and fix a point $0 \in D$. 
Suppose that $f$ is a trivial family 
over $U:=D \setminus \{0\}$, 
i.e. $f^{-1}(U) \cong X \times U$ 
for some threefold $X$. 
Take an $f$-ample $\mathbb{Q}$-divisor $\mathcal{H}$ 
and an arbitrary $\mathbb{Q}$-divisor $\mathcal{B}$ on 
$\mathcal{X}$. 
Let $\mathcal{H}_{0}$, $\mathcal{B}_{0}$ 
(resp. H, B) 
be restriction of $\mathcal{H}$, $\mathcal{B}$ 
to the special fiber $f^{-1}(0)$ 
(resp. the general fiber $X$). 
If Conjecture \ref{bms} is true for 
$(f^{-1}(0), \mathcal{H}_{0}, \mathcal{B}_{0})$, 
then it also holds for 
$(X, H, B)$. 
\end{prop}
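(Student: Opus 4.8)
The plan is to prove the contrapositive by a degeneration argument: starting from a hypothetical counterexample to Conjecture \ref{bms} on the general fiber $X$, I would spread it out as a constant family over $U$, take a flat limit on the special fiber $f^{-1}(0)$, and use the additivity of $\ch_3$ to produce a counterexample there. The formulation via $\bar\beta$-stability (rather than the inequality $\overline{\Delta}_{\omega, B}+6\overline{\nabla}_{\omega, B}\geq 0$ of Conjecture \ref{original}) is exactly what makes this work, since the quantity $\ch^{B+\bar\beta(E)H}_{3}(E)$ is additive on exact triangles whereas $\overline{\Delta}_{\omega, B}$ is quadratic.

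First I would set up relative tilt-stability for the family. Working with the $f$-ample class $\mathcal{H}$ and the class $\mathcal{B}$ on $\mathcal{X}$, each pair $(\alpha,\beta)$ determines a tilt-stability notion on every fiber, and over the trivial locus $U$ these restrict to the fixed notions determined by $(H,B)$ on $X$. The essential input announced in \cite{bay} is that the relative moduli space of tilt-semistable objects of a fixed numerical class is proper over $D$; equivalently, tilt-semistability satisfies the valuative criterion, so a family of tilt-stable objects over $U$ extends to a family of tilt-semistable objects over all of $D$ with locally constant relative numerical invariants.

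Now suppose Conjecture \ref{bms} fails for $(X,H,B)$, so there is a $\bar\beta$-stable object $E\in D^b(X)$ with $\ch^{B+\bar\beta(E)H}_{3}(E)>0$. By definition of $\bar\beta$-stability, $E$ is $\nu_{\alpha H, B+\beta H}$-stable for all $(\alpha,\beta)$ in a neighborhood of $(0,\bar\beta(E))$; fix such a point $(\alpha_{0},\beta_{0})$ with $\alpha_{0}>0$, and regard $E$ as a constant family of $\nu_{\alpha_{0}H, B+\beta_{0}H}$-stable objects over $U$. By the properness statement above, this family extends to the central fiber, giving a tilt-semistable object $E_{0}\in D^b(f^{-1}(0))$ whose invariants computed against $\mathcal{H}_{0},\mathcal{B}_{0}$ agree with those of $E$ computed against $H,B$; in particular $\bar\beta(E_{0})=\bar\beta(E)$ and $\ch^{\mathcal{B}_{0}+\bar\beta(E_{0})\mathcal{H}_{0}}_{3}(E_{0})>0$. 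Passing to the tilt Jordan--H\"older factors of $E_{0}$, each factor is tilt-stable and shares the tilt-slope of $E_{0}$, hence lies on the same numerical wall and is $\bar\beta$-stable with the common value $\bar\beta(E_{0})$. Since $\ch_{3}$ is additive, the factors' contributions sum to $\ch^{\mathcal{B}_{0}+\bar\beta(E_{0})\mathcal{H}_{0}}_{3}(E_{0})>0$, so some factor is a $\bar\beta$-stable object on $f^{-1}(0)$ violating Conjecture \ref{bms} for $(f^{-1}(0),\mathcal{H}_{0},\mathcal{B}_{0})$, a contradiction.

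The main obstacle is precisely the family input: one must know that the relative moduli of tilt-semistable objects is proper over the base, equivalently that tilt-semistability has the required specialization property, which is the technical content announced in \cite{bay}. The delicate points are that tilt-stability is only a weak stability condition, so the construction of families of weak stability conditions and the openness of stability in families must be handled with care; and one must verify that the limit $E_{0}$ genuinely carries the numerical class of $E$ and that $\bar\beta$-stability --- itself a limiting notion defined through an open neighborhood in the $(\alpha,\beta)$-plane --- is compatible with taking Jordan--H\"older factors on the special fiber.
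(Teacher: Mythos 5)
Your proposal follows essentially the same route as the paper: the paper gives no independent proof of this proposition, only noting that it follows from the existence of relative moduli spaces of tilt-stable objects over the base $D$ satisfying the valuative criterion for universal closedness, which is precisely the properness/specialization input on which your degeneration argument rests. The remaining steps you describe (local constancy of the numerical invariants, additivity of $\ch_{3}$ over Jordan--H\"older factors, and the passage between semistable limits and $\bar\beta$-stable objects as in \cite{bms16, pt15}) are the standard bookkeeping, and you correctly identify the genuinely delicate family-theoretic points as the content deferred to \cite{bay}.
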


The above result follows from the existence of 
the relative moduli spaces of tilt-stable objects 
over the base $D$, 
satisfying the valuative criterion for 
universal closedness. 

\begin{prop}
\label{degenerate}
Assume that Theorem \ref{first thm} holds 
for every split $X$. 
Then it also holds for every non-split $X$. 
\end{prop}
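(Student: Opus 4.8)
The plan is to use the degeneration technique of Proposition \ref{family} to connect each non-split $X$ from Theorem \ref{cp} (6)--(8) to a split model by constructing an explicit family $f \colon \mathcal{X} \to \mathbb{A}^1$ (or over a suitable smooth curve $D$) whose general fiber is $X$ and whose special fiber $\mathcal{X}_0$ is a split threefold of the same type. Concretely, each such $X$ is a projectivization $\mathbb{P}(\mathcal{E})$ of a vector bundle $\mathcal{E}$ over an Abelian surface $A$ or an elliptic curve $C$, where $\mathcal{E}$ is built as a successive extension of degree-zero line bundles. I would exploit precisely this extension structure: a nontrivial extension class in $\Ext^1(\mathcal{L}_1, \mathcal{L}_2)$ can be scaled to zero, giving a one-parameter family of bundles $\mathcal{E}_t$ over the base with $\mathcal{E}_t \cong \mathcal{E}$ for $t \neq 0$ and $\mathcal{E}_0 \cong \mathcal{L}_1 \oplus \mathcal{L}_2$ (and similarly for the rank-three and fiber-product cases, degenerating one extension at a time).

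The key steps, in order, are as follows. First I would make the family of extensions explicit: form the Abelian-variety base $A$ or $C$, take the relevant $\Ext^1$ group classifying $\mathcal{E}$, and build a vector bundle $\widetilde{\mathcal{E}}$ on $A \times \mathbb{A}^1$ (resp. $C \times \mathbb{A}^1$) whose restriction to the slice $t$ is the extension determined by scaling the chosen class by $t$. Second, I would set $\mathcal{X} := \mathbb{P}(\widetilde{\mathcal{E}})$ (or the relative fiber product of two such projectivizations in case (8)) and let $f \colon \mathcal{X} \to \mathbb{A}^1$ be the projection; this is a smooth projective family of threefolds, the general fiber is $X$, and the central fiber $\mathcal{X}_0$ is the split model. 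Third, I would check triviality of $f$ over $U = \mathbb{A}^1 \setminus \{0\}$: since the extension class $t \cdot e$ differs from $e$ by a unit scalar for $t \neq 0$, the bundles $\mathcal{E}_t$ are all isomorphic, hence so are the fibers $\mathcal{X}_t$, as required by the hypothesis of Proposition \ref{family}. Fourth, I would choose the polarization: pick an $f$-ample $\mathbb{Q}$-divisor $\mathcal{H}$ and an arbitrary $\mathbb{Q}$-divisor $\mathcal{B}$ on $\mathcal{X}$ whose restrictions to the general fiber realize a given class $B + i\omega \in \NS(X)_{\mathbb{C}}$; since $\NS$ is locally constant in such families and the relevant divisor classes extend, I can arrange $(H, B)$ to be an arbitrary prescribed polarization on $X$. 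Finally, applying the hypothesis that Theorem \ref{first thm} holds for the split central fiber $\mathcal{X}_0$ together with Proposition \ref{family} yields Conjecture \ref{bms} for $(X, H, B)$.

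The main obstacle I expect is the verification that an arbitrary target class $B + i\omega$ on the general fiber $X$ can be realized as the restriction of a global pair $(\mathcal{H}, \mathcal{B})$ on the total space $\mathcal{X}$, with $\mathcal{H}$ genuinely $f$-ample. The Néron--Severi group of a projective bundle over an Abelian variety is generated by the relative $\mathcal{O}(1)$ together with pullbacks from the base, and these classes deform flatly across the family, so in principle every class on $X$ extends; but one must confirm that the extended $\mathcal{H}$ stays $f$-ample near $t = 0$ (ampleness is open, so shrinking the base if necessary suffices) and that the restriction to $\mathcal{X}_0$ gives a polarization for which the split case of Theorem \ref{first thm} applies. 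This is exactly what lets us conclude the statement for \emph{every} class with $\omega$ ample, matching the strength claimed in Theorem \ref{first thm}. The remaining cases (the rank-three bundle over $C$ and the relative fiber product) are handled by iterating the construction, degenerating the extensions in stages until the bundle is fully split.
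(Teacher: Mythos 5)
Your proposal follows essentially the same route as the paper: degenerate the extension class to zero over $\mathbb{A}^1$ (the construction of \cite[Lemma 4.1.2]{hl97}), iterate one extension at a time for the rank-three and fiber-product cases, extend the polarization to the total space, and invoke Proposition \ref{family}. One justification in your fourth step is backwards, however: openness of ampleness in a family lets you pass from the \emph{special} fiber to nearby fibers, not the other way around, and ``shrinking the base'' cannot discard the point $t=0$, which is the whole point of the degeneration. So knowing $\mathcal{H}_t$ is ample for $t\neq 0$ does not by itself give $f$-ampleness at $0$. The paper closes this by an explicit computation (Lemma \ref{ns}): the nef cone of $\mathbb{P}_A(\mathcal{E})$, written in terms of the tautological class and pullbacks from the base, is independent of the extension class (and of $L\in\Pic^0$), so the class $\mathcal{H}=\mathcal{O}_{\sigma}(a)\otimes p^*N$ with $a>0$ and $N$ ample is ample on \emph{every} fiber, including the split central one. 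With that substitution your argument is complete and coincides with the paper's.
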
 

\begin{proof}
First we consider the case (6) in Theorem \ref{cp}. 
Let $A$ be an Abelian surface, 
$\mathcal{E}$ be a rank two vector bundle 
which fits into the non-split short exact sequence 
\[
0 \to \mathcal{O}_{A} \to \mathcal{E} \to L \to 0 
\]
with $L \in \Pic^0(A)$. 
Note that we may assume $L=\mathcal{O}_A$, 
as this is the only case when we have 
$\Ext^1(L, \mathcal{O}_A) \neq 0$. 
Let $X:=\mathbb{P}_{A}(\mathcal{E})$. 
By applying Proposition \ref{family}, 
we will show that 
to prove Theorem \ref{first thm} for $X$, 
it is enough to show it for 
$X_{0}:=\mathbb{P}^1 \times A$. 
Let us take an affine line 
$\mathbb{A}^1 \subset 
\Ext^1(\mathcal{O}_{A}, \mathcal{O}_{A})$ 
passing through the origin 
and a point 
$[\mathcal{E}] \in \Ext^1(\mathcal{O}_{A}, \mathcal{O}_{A})$. 
Over $\mathbb{A}^1$, we have a smooth family 
$f \colon \mathcal{X} \to \mathbb{A}^1$ 
with the following properties 
(cf. \cite[Lemma 4.1.2]{hl97}): 
\begin{enumerate}
\item Let $U:=\mathbb{A}^1 \setminus \{0\}$. 
Then we have 
$\mathcal{X}_{U}:=f^{-1}(U) 
\cong X \times U$. 
\item We have 
$\mathcal{X}_{0}:=f^{-1}(0) \cong X_{0}$. 
\end{enumerate}

Indeed, the family 
is constructed as a $\mathbb{P}^1$-bundle 
$\sigma \colon 
\mathcal{X}=\mathbb{P}_{A \times \mathbb{A}^1}(\mathcal{U}) 
\to A \times \mathbb{A}^1$, 
where $\mathcal{U}$ fits into the exact sequence 
\[
0 \to \mathcal{U} \to q^*\mathcal{E} \to i_{*}\mathcal{O}_A \to 0. 
\]
Here, $q \colon A \times \mathbb{A}^1 \to A$ is a projection 
and $i \colon A \times \{0\} \to A \times \mathbb{A}^1$ 
is an inclusion. 
Let $p:=q \circ \sigma \colon \mathcal{X} \to A$ be a projection. 
We also have to prove that, 
for a given ample divisor $H$ on $X$, 
there exists an $f$-ample divisor 
$\mathcal{H}$ on $\mathcal{X}$ 
such that its restriction to $X$ 
coincides with $H$. 
By Lemma \ref{ns}, 
we can write as 
$H=\mathcal{O}_{\pi}(a) \otimes \pi^*N$, 
where $\pi \colon X \to A$ is a 
structure morphism, 
$N$ is an ample line bundle on $A$, 
and $a >0$ is a positive integer. 
We put $\mathcal{H}:=\mathcal{O}_{\sigma}(a) \otimes p^*N$. 
Then the restriction 
$\mathcal{H}|_{f^{-1}(0)} \cong 
\mathcal{O}_{\mathbb{P}^1}(a) \boxtimes N$ 
to the central fiber 
$\mathbb{P}^1 \times A$ is ample. 
Hence the line bundle $\mathcal{H}$ 
is ample on each fiber of $f$, 
i.e., it is $f$-ample. 
Now the result holds by Proposition \ref{family}. 

Next let $C$ be an elliptic curve and 
$L_{i}$ be degree zero line bundles on $C$ 
($i=1, 2, 3$). 
Consider the case (7) in Theorem \ref{cp}, 
i.e. $X=\mathbb{P}_{C}(\mathcal{E})$, 
where $\mathcal{E}$ is a rank three vector bundle 
obtained as follows: 
\begin{align*}
& 0 \to L_{1} \to \mathcal{E}' \to L_{2} \to 0, \\
& 0 \to \mathcal{E}' \to \mathcal{E} \to L_{3} \to 0. 
\end{align*}

As above, by considering a family over the affine line 
$\mathbb{A}^1 \subset \Ext^1(L_{3}, \mathcal{E}')$ 
passing through the origin and a class $[\mathcal{E}]$, 
we may assume that 
$\mathcal{E}=\mathcal{E}' \oplus L_{3}$. 
Then by applying the same argument for 
$[\mathcal{E}'] \in \Ext^1(L_{2}, L_{1})$, 
we can reduce to the split case. 

Finally, consider the case (8) in Theorem \ref{cp}. 
For $i=1, 2$, let 
$\pi_{i} \colon Y_{i}:=\mathbb{P}_{C}(\mathcal{E}_{i}) \to C$, 
where $\mathcal{E}_{i}$ are rank two vector bundles 
fitting into the short exact sequences 
\[
0 \to \mathcal{O}_{C} \to \mathcal{E}_{i} \to L_{i} \to 0.  
\]
Let $X:=Y_{1} \times_{C} Y_{2}$. 
Noting that $X=\mathbb{P}_{Y_{1}}(\pi_{1}^*\mathcal{E}_{2})$, 
we can first reduce to the case when 
$\mathcal{E}_{2}=\mathcal{O}_{C} \oplus L_{2}$. 
Then by regarding as $X=\mathbb{P}_{Y_{2}}(\pi_{2}^*\mathcal{E}_{1})$, 
we can reduce to the case when $X$ is split. 
\end{proof}


\subsection{Conclusion}
In this subsection, 
we explain how to prove Theorem \ref{first thm} 
in the split cases. 
We use the following notations: 
\begin{itemize}
\item $A$ is an Abelian surface, 
$C$ is an elliptic curve. 
\item $L \in \Pic^0(A)$ 
and 
$L_{1}, L_{2} \in \Pic^0(C)$. 
\item For $m \in \mathbb{Z}_{>0}$, 
$L^{\frac{1}{m}}$ is a line bundle 
such that $(L^{\frac{1}{m}})^m \cong L$. 
$L^{\frac{1}{m}}_{i} \in \Pic^0(C)$ 
are similarly defined. 
\item For $i=1, 2$, 
$Y_{i}:=\mathbb{P}_{C}(\mathcal{O}_{C} \oplus L_{i})$. 
\item $X$ is 
$\mathbb{P}_{A}(\mathcal{O}_{A} \oplus L)$, 
$\mathbb{P}_{C}(\mathcal{O}_{C} \oplus L_{1} \oplus L_{2})$, 
or $Y_{1} \times_{C} Y_{2}$. 
\item For $m \in \mathbb{Z}_{>0}$, 
$Y_{i}^{(m)}:=\mathbb{P}_{C}(\mathcal{O}_{C} \oplus L_{i}^{m})$, 
and 
$Y_{i}^{(\frac{1}{m})}:=
\mathbb{P}_{C}(\mathcal{O}_{C} \oplus L_{i}^{\frac{1}{m}})$. 
$X^{(m)}$, $X^{(\frac{1}{m})}$ are defined similarly. 
\end{itemize}

We start with the following easy lemma: 

\begin{lem}
\label{iden cohom}
Let $X$ be as in Theorem \ref{cp} (6) -- (8) 
which is split, 
let $m \in \mathbb{Z}_{>0}$ be an positive integer. 
Then by identifying the tautological classes, 
we have a ring isomorphism 
\[
\Phi \colon 
H^{2*}(X^{(\frac{1}{m})}, \mathbb{Q}) \to 
H^{2*}(X, \mathbb{Q})
\]
between the even cohomology rings. 
\end{lem}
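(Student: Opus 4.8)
The plan is to reduce everything to the projective bundle formula, exploiting the fact that line bundles of degree zero have trivial Chern classes in rational cohomology. Recall that for a rank $r$ vector bundle $\mathcal{E}$ on a smooth projective base $B$, with projection $\pi \colon \mathbb{P}_{B}(\mathcal{E}) \to B$ and tautological class $\xi := c_{1}(\mathcal{O}_{\pi}(1))$, the rational cohomology is a free $H^{2*}(B, \mathbb{Q})$-module on $1, \xi, \dots, \xi^{r-1}$, with ring structure determined by the single relation
\[
\sum_{i=0}^{r} \pi^{*}c_{i}(\mathcal{E}) \, \xi^{r-i} = 0.
\]
Since both $\xi$ and the Chern classes $c_{i}(\mathcal{E})$ live in even degree, this description restricts verbatim to the even cohomology rings $H^{2*}$.

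The key observation is that all the line bundles appearing in the construction, namely $L \in \Pic^{0}(A)$ and $L_{1}, L_{2} \in \Pic^{0}(C)$, together with their $m$-th roots $L^{1/m}$ and $L_{i}^{1/m}$ (which again lie in $\Pic^{0}$, the latter being a divisible group), are algebraically trivial and hence have vanishing first Chern class in rational cohomology. Consequently the defining bundles of $X$ and of $X^{(1/m)}$ have identical, indeed vanishing, rational Chern classes: in case (6) both $\mathcal{O}_{A} \oplus L$ and $\mathcal{O}_{A} \oplus L^{1/m}$ have trivial $c_{1}, c_{2}$ in $H^{2*}(A, \mathbb{Q})$, and in case (7) both rank three bundles over $C$ have trivial $c_{1}, c_{2}, c_{3}$ (note $c_{2}, c_{3}$ already vanish for dimension reasons). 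The defining relation therefore collapses to $\xi^{2} = 0$ in case (6) and $\xi^{3} = 0$ in case (7). Since $X$ and $X^{(1/m)}$ share the same base ($A$, resp. $C$), the assignment which is the identity on $H^{2*}(\text{base}, \mathbb{Q})$ and sends the tautological class of $X^{(1/m)}$ to that of $X$ is then a well-defined ring isomorphism $\Phi$, precisely because the relations it must respect are literally the same on both sides.

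For the fiber-product case (8) I would write $X = \mathbb{P}_{Y_{1}}\!\left(\pi_{1}^{*}(\mathcal{O}_{C} \oplus L_{2})\right)$, so that iterating the bundle formula twice gives $H^{2*}(X, \mathbb{Q}) \cong H^{2*}(C, \mathbb{Q})[\xi_{1}, \xi_{2}]/(\xi_{1}^{2}, \xi_{2}^{2})$, with $\xi_{1}, \xi_{2}$ the pullbacks of the tautological classes of $Y_{1}, Y_{2}$; the same computation for $X^{(1/m)} = Y_{1}^{(1/m)} \times_{C} Y_{2}^{(1/m)}$ yields the identical presentation, and $\Phi$ is defined by identifying $\xi_{1}, \xi_{2}$ with their counterparts. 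This lemma is genuinely elementary and I do not expect a real obstacle; the only point demanding a little care is the bookkeeping in case (8), namely checking that the two tautological classes on each side are matched in the correct order and that $\Phi$ is compatible with the $H^{2*}(C,\mathbb{Q})$-module structure, which follows since $\Phi$ is the identity on the base.
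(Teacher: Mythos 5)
Your proof is correct and follows essentially the same route as the paper: the projective bundle formula together with the vanishing of the rational Chern classes of the degree-zero line bundles (and their $m$-th roots) reduces both cohomology rings to the same presentation $H^{2*}(\text{base},\mathbb{Q})[\xi]/(\xi^{r})$, and $\Phi$ is the identification of tautological classes. The paper only writes out case (6) and leaves (7)--(8) implicit, whereas you spell them out, but the argument is identical.
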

\begin{proof}
We only treat the case when 
$X=\mathbb{P}_{A}(\mathcal{O}_{A} \oplus L)$. 
Let $h \in H^2(X, \mathbb{Q})$ 
(resp. $h^{(\frac{1}{m})} \in 
H^2(X^{(\frac{1}{m})}, \mathbb{Q})$) 
be a divisor such that 
$\mathcal{O}_{X}(h)=\mathcal{O}_{\pi}(1)$ 
(resp. $\mathcal{O}_{X^{(\frac{1}{m})}}(h^{(\frac{1}{m})})
=\mathcal{O}_{\pi^{(\frac{1}{m})}}(1)$). 
Since $L \in \Pic^0(A)$, 
we have ring isomorphisms 
\[
\Phi \colon 
H^{2*}(X^{(\frac{1}{m})}, \mathbb{Q}) \cong 
H^{2*}(A, \mathbb{Q})[t]/(t^2) \cong 
H^{2*}(X, \mathbb{Q}). 
\]
Here, the isomorphism 
$H^{2*}(A, \mathbb{Q})[t]/(t^2) \cong 
H^{2*}(X, \mathbb{Q})$ 
sends $t$ to $[h]$ 
and the same is true for $X^{(\frac{1}{m})}$. 
Hence $\Phi([h^{(\frac{1}{m})}])=[h]$. 
\end{proof}

Next we construct finite morphisms 
which play important roles for our purpose.

\begin{prop}[cf. {\cite[Proposition 5]{nak02}}]
\label{endo}
Let $X$ be a threefold as in 
Theorem \ref{first thm} 
which is split. 
Then, for every positive integer 
$m \in \mathbb{Z}_{>0}$, 
we have the following commutative diagram 
\begin{equation}
\label{frob diagram}
\xymatrix{
X^{(\frac{1}{m})} \ar@/_/[rdd]_{\pi^{(\frac{1}{m}})} \ar@/^/[rrd]^{F_{m}} \ar[rd]^{g_{m}} & & \\
 & X^{(m)} \ar[d]^{\pi^{(m)}} \ar[r]^{h_{m}} & X \ar[d]^{\pi} \\
 & Z \ar[r]_{\underline{m}} & Z, 
}
\end{equation}
where $Z$ is an Abelian surface $A$ or 
an elliptic curve $C$. 

Furthermore, 
the pull-back via the morphism 
$F_{m} \colon X^{(\frac{1}{m})} \to X$ 
acts on the even cohomology as follows: 
\begin{equation} \label{act}
\Phi \circ F_{m}^* \colon 
H^{2*}(X, \mathbb{Q})  
\ni (x, y, z, w) \mapsto 
(x, m^2y, m^4z, m^6w) 
\in H^{2*}(X, \mathbb{Q}). 
\end{equation}
\end{prop}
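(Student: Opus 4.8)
The plan is to realise every arrow in \eqref{frob diagram} explicitly, as a base change along $\underline{m}$ for the bottom square and as a fibrewise power map for $g_{m}$, and then to read off \eqref{act} from the action of $F_{m}^{*}$ on the generators $\xi$ and $\pi^{*}H^{2*}(Z,\mathbb{Q})$ of the cohomology ring.

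First I would construct the bottom square. Since $Z$ is an Abelian variety, $\underline{m}\colon Z\to Z$ is a finite {\'e}tale isogeny, $\Pic^{0}(Z)$ is divisible (so the roots $L^{\frac{1}{m}}$ exist), and for every $M\in\Pic^{0}(Z)$ one has $\underline{m}^{*}M\cong M^{\otimes m}$. Hence $\underline{m}^{*}(\mathcal{O}_{Z}\oplus L)\cong \mathcal{O}_{Z}\oplus L^{m}$, and the analogous statement holds in cases (7) and (8). Defining $h_{m}\colon X^{(m)}\to X$ as the second projection from $Z\times_{\underline{m},Z}X$ therefore identifies $X^{(m)}$ with the base change of $\pi$ along $\underline{m}$; this makes the bottom square Cartesian, gives $\pi\circ h_{m}=\underline{m}\circ\pi^{(m)}$, and since $h_{m}$ is {\'e}tale and an isomorphism on each fibre we have $h_{m}^{*}\xi=\xi^{(m)}$ and $h_{m}^{*}\pi^{*}\alpha=(\pi^{(m)})^{*}\underline{m}^{*}\alpha$. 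Next I would define $g_{m}\colon X^{(\frac{1}{m})}\to X^{(m)}$ over $Z$ by raising every homogeneous fibre coordinate to the $m^{2}$-th power, following \cite{nak02}; this is a well-defined morphism of $Z$-schemes because the $i$-th summand $L_{i}^{\frac{1}{m}}$ is carried to $(L_{i}^{\frac{1}{m}})^{m^{2}}=L_{i}^{m}$, which is exactly the bundle defining $X^{(m)}$. Setting $F_{m}:=h_{m}\circ g_{m}$ then yields $\pi^{(m)}\circ g_{m}=\pi^{(\frac{1}{m})}$ and $\pi\circ F_{m}=\underline{m}\circ\pi^{(\frac{1}{m})}$, which is the commutativity in \eqref{frob diagram}.

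For the cohomological statement, the projective bundle formula together with Lemma \ref{iden cohom} shows that $H^{2*}(X,\mathbb{Q})$ is generated over $\pi^{*}H^{2*}(Z,\mathbb{Q})$ by the tautological class(es) $\xi$, so it suffices to compute $F_{m}^{*}$ on these generators. For $\alpha\in H^{2j}(Z,\mathbb{Q})$ we obtain $F_{m}^{*}\pi^{*}\alpha=(\pi^{(\frac{1}{m})})^{*}\underline{m}^{*}\alpha=m^{2j}(\pi^{(\frac{1}{m})})^{*}\alpha$, because $\underline{m}^{*}$ acts on $H^{1}(Z)$ by multiplication by $m$, hence by $m^{2j}$ on $H^{2j}=\wedge^{2j}H^{1}$. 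For the tautological class, $F_{m}^{*}\xi=g_{m}^{*}h_{m}^{*}\xi=g_{m}^{*}\xi^{(m)}$, and restricting to a fibre shows $g_{m}^{*}\xi^{(m)}=m^{2}\xi^{(\frac{1}{m})}+(\pi^{(\frac{1}{m})})^{*}\delta$ for some $\delta\in H^{2}(Z,\mathbb{Q})$; here $\delta$ vanishes since the correction term is built from $c_{1}(L_{i})$, which is zero in $H^{2}(Z,\mathbb{Q})$ as $L_{i}\in\Pic^{0}(Z)$. Applying the ring isomorphism $\Phi$, which identifies $\xi^{(\frac{1}{m})}$ with $\xi$ and is the identity on $\pi^{*}H^{2*}(Z,\mathbb{Q})$, I conclude that $\Phi\circ F_{m}^{*}$ multiplies every class in $H^{2i}(X,\mathbb{Q})$ by $m^{2i}$, which is exactly \eqref{act}.

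The main obstacle is the clean construction of $g_{m}$ as a morphism of $Z$-schemes uniformly across cases (6)--(8), for which I rely on the power-map construction of \cite{nak02}, and the bookkeeping that forces the fibre exponent to be $m^{2}$: it is the interplay between the base scaling by $m$ on $\Pic^{0}(Z)$ (so that $\underline{m}^{*}L=L^{m}$) and the matching $(L^{\frac{1}{m}})^{m^{2}}=L^{m}$ that produces the uniform factor $m^{2i}$. The second delicate point is the vanishing of the correction term $\delta$ in rational cohomology; this is precisely where the hypothesis $L_{i}\in\Pic^{0}(Z)$ is essential, as it makes the formula \eqref{act} an exact diagonal action rather than merely an upper-triangular one.
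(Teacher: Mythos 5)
Your proposal is correct and follows essentially the same route as the paper: the bottom square is the base change of $\pi$ along $\underline{m}$ using $\underline{m}^{*}L\cong L^{m}$ for $L\in\Pic^{0}(Z)$, the map $g_{m}$ is the fibrewise $m^{2}$-power map (which the paper realises via the inclusion $\mathcal{O}\oplus L^{m}\subset\Sym^{m^{2}}(\mathcal{O}\oplus L^{\frac{1}{m}})$), and \eqref{act} is read off from the action on the tautological class and on $\pi^{*}H^{2*}(Z,\mathbb{Q})$. Your explicit justification that the correction term $\delta$ vanishes because $c_{1}(L_{i})=0$ is a point the paper leaves implicit, but it is the same argument.
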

\begin{proof}
First consider the case (6) in Theorem \ref{cp}: 
$X:=\mathbb{P}_{A}(\mathcal{O}_{A} \oplus L)$. 
Consider the multiplication map 
$\underline{m} \colon A \to A$. 
By \cite[p. 71 $\rm(i\hspace{-.08em}i\hspace{-.08em}i)$]{mum08}, we have 
$\underline{m}^*L \cong L^m$. 
Hence by base change, 
we have the morphism 
$h_{m} \colon X^{(m)} \to X$. 
On the other hand, 
the natural inclusion 
\begin{equation}
\label{incl}
\mathcal{O}_{A} \oplus L^{m} \subset 
\Sym^{m^2}(\mathcal{O}_{A} \oplus L^{\frac{1}{m}}) 
= \mathcal{O}_{A} \oplus L^{\frac{1}{m}} \oplus 
\cdots \oplus (L^{\frac{1}{m}})^{m^2} 
\end{equation}
induces a morphism 
$g_{m} \colon X^{(\frac{1}{m})} \to X^{(m)}$. 
Now we get a commutative diagram 
as in (\ref{frob diagram}). 
Locally over $A$, the morphism $g_{m}$ 
is nothing but the toric Frobenius morphism 
$\underline{m}^2 \colon \mathbb{P}^1 \to \mathbb{P}^1$. 
In particular, we have 
$g_m^*\mathcal{O}_{\pi^{(m)}}(1)
=\mathcal{O}_{\pi^{(\frac{1}{m})}}(m^2)$. 

To see that the pull back $F_{m}^*$ acts 
on the cohomology as (\ref{act}), 
it is enough to look at the action on 
$H^2(X, \mathbb{Q}) \cong 
\pi^*H^2(A, \mathbb{Q}) \oplus \mathbb{Q} \cdot h$, 
where $h$ is the tautological class. 
For a class $y \in H^2(A, \mathbb{Q})$, 
we have 
$\underline{m}^*y=m^2y$ and hence 
\[
\Phi \circ F_m^* (\pi^*y)
=\Phi\left(\pi^{(\frac{1}{m})*}\underline{m}^* y \right)
=m^2\pi^*(y). 
\]
On the other hand, we have 
\[
\Phi \circ F_m^* (h)
=\Phi\left( g_m^*h^{(m)} \right)
=\Phi\left(m^2 h^{(\frac{1}{m})} \right)
=m^2h. 
\]
Here, the second equality follows from 
the local description of the morphism $g_m$, 
while the third equality follows from the definition of $\Phi$.

Next consider the case (7) in Theorem \ref{cp}, i.e., 
$X=\mathbb{P}_{C}(\mathcal{O}_{C} \oplus L_{1} \oplus L_{2})$. 
Replacing (\ref{incl}) by the inclusion 
\[
\mathcal{O}_{C} \oplus L_{1}^{m} \oplus L_{2}^{m} 
\subset 
\Sym^{m^2}(\mathcal{O}_{C} \oplus L_{1}^{\frac{1}{m}} \oplus L_{2}^{\frac{1}{m}}), 
\]
we get the diagram as in (\ref{frob diagram}). 

Finally, consider the case (8) in Theorem \ref{cp}: 
$X=Y_{1} \times_{C} Y_{2}$. 
As above, we can construct the morphisms 
$Y_{i}^{(\frac{1}{m})} \to Y_{i}^{(m)}$, 
which induce the morphism 
$g_{m} \colon X^{(\frac{1}{m})} \to X^{(m)}$. 
Hence we get the diagram as in (\ref{frob diagram}). 
\end{proof}

\begin{rmk}
\label{nonendo}
By using the inclusion 
\[
\mathcal{O}_{A} \oplus L^{m} \subset 
\Sym^{m}(\mathcal{O}_{A} \oplus L) 
\] 
instead of (\ref{incl}), 
we get an endomorphism 
$F_{m}' \colon X \to X$ 
which is the multiplication map $\underline{m} \colon A \to A$ 
on the base, and the toric Frobenius morphism 
$\underline{m} \colon \mathbb{P}^1 \to \mathbb{P}^1$ 
on the fiber. 
It seems natural to use the endomorphism $F_{m}'$ 
rather than $F_{m}$. 
The issue is that the endomorphism 
$F_{m}'$ is not polarized, i.e., 
there does not exist any ample divisor $H$ on $X$ 
such that the pull back 
$F_m'^*H$ is a multiple of $H$. 
On the other hand, 
the morphism $F_{m}$ behaves like a polarized 
endomorphism, although it is not an endomorphism 
(see the formula (\ref{act})). 
\end{rmk}

According to the description (\ref{act}) 
of the pull back $F_{m}^*$, 
we can prove the following two results: 

\begin{prop}[\cite{bms16}]
\label{approx ext}
Let $X$ be as in 
Theorem \ref{cp} (6), (7), or (8) 
which is split, 
let $F_{m}$ be the morphism 
constructed in Proposition \ref{endo}. 
Let $E \in D^b(X)$ be a two term complex 
concentrated in degree $-1$ and $0$. 
\begin{enumerate}
\item If there exists an ample divisor $H$ on $X^{(\frac{1}{m})}$ 
such that 
\[
\hom
\left(
\mathcal{O}(H), F_{m}^*E
\right)
=0, 
\]
then we have 
\[
\hom
\left(
\mathcal{O}, F_{m}^*E
\right)
=O(m^4). 
\]

\item If there exists an ample divisor $H$ on $X^{(\frac{1}{m})}$ 
such that 
\[
\ext^2
\left(
\mathcal{O}(-H), F_{m}^*E
\right)
=0, 
\]
then 
\[
\ext^2
\left(
\mathcal{O}, F_{m}^*E
\right)
=O(m^4). 
\]
\end{enumerate}
\end{prop}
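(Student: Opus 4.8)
The plan is to use the classical device of restricting to an ample divisor, which replaces the threefold $X^{(\frac{1}{m})}$ by a surface and thereby converts the naive $O(m^6)$ growth (coming from the top Chern character $\ch_3$, which scales like $m^6$ under (\ref{act})) into the desired $O(m^4)$. Write $G:=F_m^*E$ for brevity. Since $H$ is ample on $X^{(\frac{1}{m})}$ and these varieties carry enough sections (an ample class on an abelian variety, or on a $\mathbb{P}^1$-bundle over one, is effective), I would fix an effective divisor $D\in|H|$; the corresponding section produces the two short exact sequences
\[
0\to\mathcal{O}\to\mathcal{O}(H)\to\mathcal{O}_{D}(H)\to0,\qquad 0\to\mathcal{O}(-H)\to\mathcal{O}\to\mathcal{O}_{D}\to0.
\]

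For part (1), I would apply $\Hom(-,G)$ to the first sequence. Its long exact sequence contains the segment
\[
\Hom(\mathcal{O}(H),G)\to\Hom(\mathcal{O},G)\to\Ext^1(\mathcal{O}_{D}(H),G),
\]
so the hypothesis $\hom(\mathcal{O}(H),G)=0$ forces $\hom(\mathcal{O},G)\le\ext^1(\mathcal{O}_{D}(H),G)$. Dually, for part (2), applying $\Hom(-,G)$ to the second sequence gives
\[
\Ext^2(\mathcal{O}_{D},G)\to\Ext^2(\mathcal{O},G)\to\Ext^2(\mathcal{O}(-H),G),
\]
and the hypothesis $\ext^2(\mathcal{O}(-H),G)=0$ forces $\ext^2(\mathcal{O},G)\le\ext^2(\mathcal{O}_{D},G)$. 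In both cases the estimate is reduced to bounding an $\Ext$-group against a sheaf supported on the surface $D$, and these two reduction steps are purely formal.

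It then remains to prove that these divisorial $\Ext$-groups grow like $O(m^4)$. I would compute them on $D$ itself: by adjunction for the closed immersion $j\colon D\hookrightarrow X^{(\frac{1}{m})}$, the groups $\Ext^{\bullet}(\mathcal{O}_{D}(\ast),G)$ are identified with $\Ext$-groups on $D$ between line bundles and the (shifted, $\mathcal{O}_D(D)$-twisted) derived restriction of $G$ to $D$. Because $D$ is two dimensional, only the degree $\le2$ part of the Chern data survives in Hirzebruch--Riemann--Roch there, and by the action (\ref{act}) these pieces scale like $1,\,m^2,\,m^4$; hence every Euler characteristic on $D$ is $O(m^4)$. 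Crucially, the intersection numbers feeding these computations do not vary with $m$, since Lemma \ref{iden cohom} identifies the even cohomology rings of all the $X^{(\frac{1}{m})}$ (and thus of the divisors $D$) compatibly with $\Phi$; this is what makes the bound uniform in $m$ rather than merely finite for each $m$.

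The hard part will be the final step: passing from the Euler characteristic on $D$, which is visibly $O(m^4)$, to a bound on a \emph{single} cohomology group $\ext^1(\mathcal{O}_{D}(H),G)$ or $\ext^2(\mathcal{O}_{D},G)$. The difficulty is compounded by the fact that $G$ is a genuine two term complex, not a sheaf, and that both the ambient threefold $X^{(\frac{1}{m})}$ and the divisor $D$ move with $m$. I would handle this by splitting the derived restriction of $G$ into its two cohomology sheaves via the hypercohomology spectral sequence, and then invoking the standard estimate that, on a projective surface whose numerical type is fixed, each cohomology group of a coherent sheaf is bounded by a quantity quadratic in its Chern classes. Combined with the explicit polynomial growth (\ref{act}) and the ring identification of Lemma \ref{iden cohom}, this yields $\ext^1(\mathcal{O}_{D}(H),G)=O(m^4)$ and $\ext^2(\mathcal{O}_{D},G)=O(m^4)$, which by the reductions above finishes both statements.
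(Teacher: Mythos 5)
Your opening reduction is exactly how the argument that the paper invokes begins (the paper's own ``proof'' is a one-line citation of Section 7 of \cite{bms16}, so the real content has to be reconstructed as you are doing): the two short exact sequences, the resulting bounds $\hom(\mathcal{O},G)\le\ext^1(\mathcal{O}_D(H),G)$ and $\ext^2(\mathcal{O},G)\le\ext^2(\mathcal{O}_D,G)$, and the adjunction identifying these with hypercohomology of $\dL j^*G$ on the surface $D$ are all correct, as is the observation that the Euler characteristic on $D$ only sees $\ch_{\le 2}$ and hence is $O(m^4)$ by (\ref{act}) and Lemma \ref{iden cohom}.

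The gap is in the step you yourself flag as the hard part. The ``standard estimate that, on a projective surface whose numerical type is fixed, each cohomology group of a coherent sheaf is bounded by a quantity quadratic in its Chern classes'' is false: on $\mathbb{P}^2$ the sheaves $\mathcal{O}(n)\oplus\mathcal{O}(-n)\oplus I_{Z_n}$ with $\mathrm{length}(Z_n)=n^2$ all have Chern character $(3,0,0)$ while $h^0$ grows like $n^2$. Individual cohomology groups on a surface are controlled by Chern classes only in the presence of additional input such as slope-semistability or a vanishing hypothesis of the shape $\Hom(\mathcal{O}(H),-)=0$; the family of all coherent sheaves with fixed Chern character is not bounded. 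This is precisely why the argument of \cite{bms16} does not stop at the surface: the hypothesis $\Hom(\mathcal{O}(H),F_m^*E)=0$ must be propagated to the restricted complex on $D$ (which takes some care, since for a two-term complex $G$ the vanishing of $\Hom(\mathcal{O}(H),G)$ does not even imply that of $\Hom(\mathcal{O}(H),\mathcal{H}^0(G))$, because of the connecting map through $\Ext^1(\mathcal{O}(H),\mathcal{H}^{-1}(G))$), and one restricts a second time to a complete-intersection curve, where $h^0$ can finally be compared with an Euler characteristic plus correction terms that involve only $\ch_{\le 2}$. Your proposal uses the vanishing hypothesis exactly once, for the first reduction, and then appeals to a general boundedness principle that does not exist; as written the final bound $\ext^1(\mathcal{O}_D(H),G)=O(m^4)$ is not established. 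The spectral-sequence splitting of $\dL j^*G$ into its cohomology sheaves is fine, but it only relocates the problem to bounding $h^0$, $h^1$, $h^2$ of those sheaves, which again cannot be done from Chern data alone.
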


\begin{proof}
Since we know that 
the pull back $F_{m}^*$ 
acts on the cohomology 
as in (\ref{act}), 
the arguments of 
Section 7 in \cite{bms16} 
prove the result. 
\end{proof} 

\begin{lem}
\label{chern comp}
Let $X$ be as in Theorem \ref{cp} (6) -- (8) 
which is split, 
$m, q \in \mathbb{Z}_{>0}$ be positive integers. 
Take a divisor $D$ on $X$ 
and let $D^{(\frac{1}{mq})}$ 
be a divisor on $X^{(\frac{1}{mq})}$ 
such that 
$D^{(\frac{1}{mq})}=\Phi^{-1}(D)$ 
in the cohomology ring. 
Then for every object $E \in D^b(X)$,  
we have the equality 
\[
\ch_{3}\left(F_{mq}^*E \otimes 
\mathcal{O}(-m^2qD^{(\frac{1}{mq})})
\right)
=m^6q^6\ch_{3}^{\frac{1}{q}D}(E) 
\in \mathbb{Q}
\]
as rational numbers. 
\end{lem}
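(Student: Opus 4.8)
The plan is to turn the asserted identity into a formal manipulation in the even cohomology ring $H^{2*}(X^{(\frac{1}{mq})}, \mathbb{Q})$, relying only on the functoriality of the Chern character and on the explicit scaling description (\ref{act}) of $\Phi \circ F_{mq}^*$ already established in Proposition \ref{endo}. Writing $B' := m^2 q\, D^{(\frac{1}{mq})}$, I would first record that tensoring by $\mathcal{O}(-B')$ multiplies the Chern character by $e^{-B'}$, so that
\[
\ch\left(F_{mq}^*E \otimes \mathcal{O}(-B')\right)
= e^{-B'}\cdot \ch(F_{mq}^*E)
= \ch^{B'}(F_{mq}^*E),
\]
and that by functoriality $\ch(F_{mq}^*E) = F_{mq}^*\ch(E)$. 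Extracting the degree-three ($H^6$) part then gives
\[
\ch_3^{B'}(F_{mq}^*E)
= \sum_{j=0}^{3}\frac{(-B')^j}{j!}\, F_{mq}^*\ch_{3-j}(E).
\]

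Next I would apply the ring isomorphism $\Phi$ of Lemma \ref{iden cohom} to this degree-six class and evaluate the two kinds of factors appearing. By the very definition of $D^{(\frac{1}{mq})}$ we have $\Phi(D^{(\frac{1}{mq})}) = D$, hence $\Phi(B') = m^2 q\, D$; and by (\ref{act}) the image $\Phi\bigl(F_{mq}^*\ch_i(E)\bigr)$ equals $(mq)^{2i}\ch_i(E)$. Since $\Phi$ is a ring homomorphism it distributes over the products above, so the $j$-th summand carries the scalar $(m^2 q)^j (mq)^{2(3-j)} = m^6 q^{6-j}$. Pulling out the common factor $m^6 q^6$ leaves $\sum_{j}\frac{(-\frac{1}{q}D)^j}{j!}\ch_{3-j}(E)$, which is exactly $\ch_3^{\frac{1}{q}D}(E)$; thus
\[
\Phi\left(\ch_3^{B'}(F_{mq}^*E)\right)
= m^6 q^6\, \ch_3^{\frac{1}{q}D}(E)
\]
as degree-six classes on $X$.

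Finally, I would interpret $\ch_3$ as the integral of this degree-six class over the threefold. Because $\Phi$ identifies tautological and base classes, it sends the point class of $X^{(\frac{1}{mq})}$ to that of $X$ and so preserves integrals in top degree; transporting the displayed identity through $\Phi$ therefore yields the claimed equality of rational numbers. I expect the only delicate points to be the bookkeeping of the exponents of $m$ and $q$ in each summand and the verification that $\Phi$ is compatible with integration in top degree; neither is a genuine geometric obstacle, since both the functoriality of $\ch$ and the scaling (\ref{act}) are already available.
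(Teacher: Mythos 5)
Your proposal is correct and follows essentially the same route as the paper: the paper likewise applies $\Phi$ to $\ch_3\bigl(F_{mq}^*E \otimes \mathcal{O}(-m^2qD^{(\frac{1}{mq})})\bigr)$, expands the twisted Chern character term by term, and uses the scaling formula (\ref{act}) together with $\Phi(D^{(\frac{1}{mq})})=D$ to extract the factor $m^6q^6$ and reassemble $\ch_3^{\frac{1}{q}D}(E)$. Your summation bookkeeping $(m^2q)^j(mq)^{2(3-j)}=m^6q^{6-j}$ matches the paper's explicit four-term computation exactly.
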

\begin{proof}
Note that $\Phi(D^{(\frac{1}{mq})})=D$ 
by definition. 
Hence by the formula (\ref{act}), 
we have 
\begin{align*}
& \quad \ch_{3}\left(F_{mq}^*E \otimes 
\mathcal{O}(-m^2qD^{(\frac{1}{mq})})
\right) \\
&=\Phi\left(\ch_{3}\left(F_{mq}^*E \otimes 
\mathcal{O}(-m^2qD^{(\frac{1}{mq})})
\right)
\right) \\
&=-\frac{1}{6}m^6q^3D^{3}\ch_{0}(E)
+\frac{1}{2}\left(m^4q^2D^{2}
\right)\left(m^2q^2\ch_{1}(E)
\right)
-\left(m^2qD
\right)\left(m^4q^4\ch_{2}(E)
\right) \\
&\quad +m^6q^6\ch_{3}(E) \\
&=m^6q^6\ch_{3}^{\frac{1}{q}D}(E)
\end{align*}
as required. 
\end{proof}

Next we prove a variant of 
the toric Frobenius splitting of line bundles. 

\begin{prop}[cf. \cite{tho00}]
\label{toric split}
Let $X$ and $g_{m}$ be as in 
Proposition \ref{endo}. 
Let $M$ be a line bundle on $X^{(\frac{1}{m})}$. 
Then the vector bundle 
$g_{m*}M$ decomposes into a direct sum of line bundles. 
Furthermore, the direct summands are 
explicitly described as follows: 
\begin{enumerate}
\item When $X=\mathbb{P}_{A}(\mathcal{O}_{A} \oplus L)$ 
is as in Theorem \ref{cp} (6) 
and 
$M=
\mathcal{O}_{\pi^{(\frac{1}{m})}}(a) 
\otimes \pi^{(\frac{1}{m})*}N$, 
then each direct summand of $g_{m*}M$ is of the following form: 
\[
\mathcal{O}_{\pi^{(m)}}(i) 
\otimes \pi^{(m)*} \left(L^{\frac{j}{m}} \otimes N \right), 
\]
where 
$i = \lfloor \frac{a}{m^2} \rfloor-1, 
\lfloor \frac{a}{m^2} \rfloor$, 
$0 \leq j \leq m^2$. 

\item When 
$X=\mathbb{P}_{C}(\mathcal{O}_{C} \oplus L_{1} \oplus L_{2})$ 
is as in Theorem \ref{cp} (7) 
and 
$M=
\mathcal{O}_{\pi^{(\frac{1}{m})}}(a) 
\otimes \pi^{(\frac{1}{m})*}N$, 
then each direct summand of $g_{m*}M$ is of the following form: 
\[
\mathcal{O}_{\pi^{(m)}}(i) 
\otimes 
\pi^{(m)*} \left( 
L_{1}^{\frac{j_{1}}{m}} 
\otimes L_{2}^{\frac{j_{2}}{m}} 
\otimes N 
\right), 
\]
where 
$i = \lfloor \frac{a}{m^2} \rfloor-2, 
\lfloor \frac{a}{m^2} \rfloor-1, 
\lfloor \frac{a}{m^2} \rfloor$, 
and 
$0 \leq j_{1}, j_{2} \leq m^2$.

\item  When 
$X=\mathbb{P}_{C}(\mathcal{O}_{C} \oplus L_{1}) 
\times_{C} \mathbb{P}_{C}(\mathcal{O}_{C} \oplus L_{2})$ 
is as in Theorem \ref{cp} (8) 
and 
$M=\mathcal{O}_{\pi^{(\frac{1}{m})}}(a, b) 
\otimes \pi^{(\frac{1}{m})*}N$, 
then each direct summand of $g_{m*}M$ is of the following form: 
\[
\mathcal{O}_{\pi^{(m)}}(i, j) 
\otimes 
\pi^{(m)*} \left( 
L_{1}^{\frac{k_{1}}{m}} 
\otimes L_{2}^{\frac{k_{2}}{m}} 
\otimes N 
\right), 
\]
where 
$i = \lfloor \frac{a}{m^2} \rfloor-1, 
\lfloor \frac{a}{m^2} \rfloor$, 
$j = \lfloor \frac{b}{m^2} \rfloor-1, 
\lfloor \frac{b}{m^2} \rfloor$, 
and 
$0 \leq k_{1}, k_{2} \leq m^2$. 
\end{enumerate}
\end{prop}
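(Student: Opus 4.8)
The plan is to exploit the fact, established in Proposition \ref{endo}, that $g_{m}$ is, fibrewise over the base $Z$ (an Abelian surface $A$ or an elliptic curve $C$), nothing but the toric Frobenius morphism: the $m^2$-power map on $\mathbb{P}^1$ in cases (6) and (8), and on $\mathbb{P}^2$ in case (7). Over a single point of $Z$, the splitting of the pushforward of a line bundle under such a map into a direct sum of line bundles is exactly Thomsen's theorem \cite{tho00}; the substance of the proposition is to upgrade this fibrewise statement to a genuine direct sum on $X^{(m)}$ and to pin down the summands. First I would dispose of the factor pulled back from the base: writing $M=\mathcal{O}_{\pi^{(\frac{1}{m})}}(a)\otimes \pi^{(\frac{1}{m})*}N$ and using $\pi^{(\frac{1}{m})}=\pi^{(m)}\circ g_{m}$ together with the projection formula, one gets $g_{m*}M=g_{m*}(\mathcal{O}_{\pi^{(\frac{1}{m})}}(a))\otimes \pi^{(m)*}N$, so it suffices to treat $M=\mathcal{O}_{\pi^{(\frac{1}{m})}}(a)$ and carry the global twist $N$ along at the end (and likewise for the bidegree $(a,b)$ in case (8)).

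Next I would compute $g_{m*}\mathcal{O}_{\pi^{(\frac{1}{m})}}(a)$ directly as a sheaf on the projective bundle $\pi^{(m)}\colon X^{(m)}\to Z$ by recording all of its twisted pushforwards to $Z$. Since the inclusion (\ref{incl}) sends the generators of $\mathcal{O}_{Z}\oplus L^{m}$ to the $m^2$-th powers $x^{m^2},y^{m^2}$, it is untwisted, and hence $g_{m}^{*}\mathcal{O}_{\pi^{(m)}}(1)=\mathcal{O}_{\pi^{(\frac{1}{m})}}(m^2)$ with no base twist. The projection formula then gives, for all $n\gg 0$,
\[
\pi^{(m)}_{*}\!\left(g_{m*}\mathcal{O}_{\pi^{(\frac{1}{m})}}(a)\otimes \mathcal{O}_{\pi^{(m)}}(n)\right)
=\pi^{(\frac{1}{m})}_{*}\mathcal{O}_{\pi^{(\frac{1}{m})}}(a+nm^2)
=\Sym^{a+nm^2}\!\left(\mathcal{O}_{Z}\oplus L^{\frac{1}{m}}\right),
\]
with the evident analogues $\mathcal{O}_{C}\oplus L_{1}^{\frac{1}{m}}\oplus L_{2}^{\frac{1}{m}}$ in case (7) and the bigraded $\Sym^{a+nm^2}\otimes\Sym^{b+nm^2}$ in case (8). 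A rank $m^2$ bundle of the conjectured shape $\bigoplus_{\alpha}\mathcal{O}_{\pi^{(m)}}(i_{\alpha})\otimes\pi^{(m)*}P_{\alpha}$ has twisted pushforward $\bigoplus_{\alpha}P_{\alpha}\otimes\Sym^{i_{\alpha}+n}(\mathcal{O}_{Z}\oplus L^{m})$; comparing the two sides as multisets of line bundles on $Z$ and sorting the exponents of $L^{\frac{1}{m}}$ into residue classes modulo $m^2$ reads off both the relative degrees $i_{\alpha}$ and the base twists $P_{\alpha}$. Writing $a=m^2\lfloor a/m^2\rfloor+r$, the residues $0,\dots,r$ produce the summands with $i_{\alpha}=\lfloor a/m^2\rfloor$ and the residues $r+1,\dots,m^2-1$ those with $i_{\alpha}=\lfloor a/m^2\rfloor-1$, exactly the two relative degrees asserted, while the starting term of each residue class fixes $P_{\alpha}$ as the asserted bounded power of $L$ in $\Pic^0(Z)$. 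The cases (7) and (8) are identical bookkeeping on $\mathbb{P}^2$ and $\mathbb{P}^1\times\mathbb{P}^1$, yielding the three relative degrees $\lfloor a/m^2\rfloor-2,\lfloor a/m^2\rfloor-1,\lfloor a/m^2\rfloor$, respectively the bidegrees in $\{\lfloor a/m^2\rfloor-1,\lfloor a/m^2\rfloor\}\times\{\lfloor b/m^2\rfloor-1,\lfloor b/m^2\rfloor\}$, together with the twists $L_{1}^{k_1}\otimes L_{2}^{k_2}$.

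The one genuinely nontrivial point, which I expect to be the main obstacle, is to promote this equality of twisted pushforwards into an honest splitting of $g_{m*}\mathcal{O}_{\pi^{(\frac{1}{m})}}(a)$ as a direct sum of line bundles on $X^{(m)}$, rather than merely a fibrewise splitting. Here I would argue that $g_{m*}\mathcal{O}_{\pi^{(\frac{1}{m})}}(a)$ is the sheaf on the relative $\Proj$ associated to the graded $\bigoplus_{n\ge 0}\Sym^{n}(\mathcal{O}_{Z}\oplus L^{m})$-module $\bigoplus_{n}\Sym^{a+nm^2}(\mathcal{O}_{Z}\oplus L^{\frac{1}{m}})$, and that this module splits as a direct sum of rank-one shifted modules indexed by the residue classes modulo $m^2$ computed above, each being precisely the graded module of a line bundle $\mathcal{O}_{\pi^{(m)}}(i_{\alpha})\otimes\pi^{(m)*}P_{\alpha}$. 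Because $L\in\Pic^0(Z)$, the tautological classes are identified under the isomorphism $\Phi$ of Lemma \ref{iden cohom}, so the relative $\mathcal{O}(1)$'s and the powers of $L$ interact just as in the split toric model and the residue decomposition is compatible with the $\Sym^{\bullet}(\mathcal{O}_{Z}\oplus L^{m})$-action; this compatibility is what makes the splitting global. Note that the stated ranges for $j,j_{1},j_{2},k_{1},k_{2}$ are bounds on the exponents, not exact multiplicities, so no finer count is required. With the splitting established, re-tensoring by $\pi^{(m)*}N$ recovers the general $M$ and completes all three cases simultaneously.
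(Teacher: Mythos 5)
Your argument is correct, but it establishes the one hard point --- the global splitting --- by a genuinely different mechanism than the paper. The paper makes the same initial reductions (projection formula to remove $N$, the Veronese relation $g_{m}^*\mathcal{O}_{\pi^{(m)}}(1)\cong\mathcal{O}_{\pi^{(\frac{1}{m})}}(m^2)$ to assume $0\leq a<m^2$), but then works with the adjunction map $\pi^{(m)*}\pi^{(m)}_{*}g_{m*}M\to g_{m*}M$: using Thomsen's fibrewise splitting it shows this map is injective with quotient of the form $\pi^{(m)*}\mathcal{G}\otimes\mathcal{O}_{\pi^{(m)}}(-1)$, identifies the sub and quotient as explicit direct sums of line bundles, and then must prove that the resulting short exact sequence (\ref{ses}) splits. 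That last step is where the paper's effort goes: the relevant $\Ext^1$ is a direct sum of groups $H^1(Z,L^{\frac{\eta}{m}})$ with $\eta\neq 0$, which vanishes when $L$ is non-torsion but not when $L^{\frac{1}{m}}$ is torsion, and the torsion case requires a separate contradiction argument via pullback along an {\'e}tale multiplication cover of the base. Your route through the relative Serre correspondence sidesteps all of this: the graded $\Sym^{\bullet}(\mathcal{O}_{Z}\oplus L^{m})$-module $\bigoplus_{n}\Sym^{a+nm^2}\bigl(\mathcal{O}_{Z}\oplus L^{\frac{1}{m}}\bigr)$ decomposes on the nose into residue-class submodules, because the two degree-one generators act as the $m^2$-th power monomials coming from the inclusion (\ref{incl}) and hence preserve the exponent of $L^{\frac{1}{m}}$ modulo $m^2$; each residue piece is a shifted free rank-one module, so there is no extension to kill and no case distinction on the torsion of $L$. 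This is closer to the standard proof of Thomsen's theorem and is arguably cleaner; what the paper's argument buys is that it stays within elementary sheaf theory on projective bundles and never invokes the graded-module dictionary. Your second paragraph (matching multisets of twisted pushforwards) is logically superfluous once the third paragraph is in place; the only point you should make fully explicit when writing it up is that each residue submodule is identified with $L^{\frac{\rho}{m}}\otimes\Sym^{\bullet+i_{\rho}}(\mathcal{O}_{Z}\oplus L^{m})$ as a graded \emph{module}, not merely degree by degree --- which is immediate from the monomial description of the multiplication and is exactly the compatibility you flag at the end.
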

\begin{proof}
(1) Let $X=\mathbb{P}_{A}(\mathcal{O}_{A} \oplus L)$ 
be as in Theorem \ref{cp} (6). 
Since $g_{m*}M \cong 
g_{m*}\mathcal{O}_{\pi^{(\frac{1}{m})}}(a) 
\otimes \pi^{(m)*}N$, 
we may assume that 
$M=\mathcal{O}_{\pi^{(\frac{1}{m})}}(a)$. 
Furthermore, since we have 
$g_{m}^*\mathcal{O}_{\pi^{(m)}}(1) 
\cong \mathcal{O}_{\pi^{(\frac{1}{m})}}(m^2)$, 
we may assume 
$0 \leq a < m^2$. 
Now let 
$\mathcal{F}:=g_{m*}M$, 
and consider the adjoint map 
$\alpha \colon 
\pi^{(m)*}\pi^{(m)}_{*}\mathcal{F} \to \mathcal{F}$. 
On the fiber of $\pi^{(m)}$, 
the map $\alpha$ is nothing but the natural inclusion 
\[
\mathcal{O}_{\mathbb{P}^1}^{\oplus a+1} 
\subset 
\mathcal{O}_{\mathbb{P}^1}^{\oplus a+1} 
\oplus \mathcal{O}_{\mathbb{P}^1}(-1)^{\oplus (m^2-a-1)}. 
\]

Indeed by \cite{tho00}, on the fiber of $\pi^{(m)}$, 
we have an isomorphism 
\[
\mathcal{F}|_{\mathbb{P}^1} 
\cong \underline{m}^2_{*}\mathcal{O}_{\mathbb{P}^1}(a) 
\cong 
\mathcal{O}_{\mathbb{P}^1}^{\oplus a+1} 
\oplus \mathcal{O}_{\mathbb{P}^1}(-1)^{\oplus (m^2-a-1)}, 
\]
where $\underline{m}^2$ denotes the 
toric Frobenius morphism on $\mathbb{P}^1$ 
(see also \cite[Theorem 5.2]{bmsz17} 
for this formula). 
Moreover, the adjoint map $\alpha$ restricted to the fiber 
is nothing but the evaluation map 
\[
\alpha|_{\mathbb{P}^1} \colon 
H^0(\mathbb{P}^1, \mathcal{F}|_{\mathbb{P}^1}) 
\otimes \mathcal{O}_{\mathbb{P}^1} 
\hookrightarrow 
\mathcal{F}|_{\mathbb{P}^1}. 
\]

Hence globally, the map $\alpha$ is injective 
and we get the short exact sequence 
\begin{equation}
\label{ses}
0 \to \pi^{(m)*}\pi^{(m)}_{*}\mathcal{F} 
\to \mathcal{F} 
\to \pi^{(m)*}\mathcal{G} \otimes \mathcal{O}_{\pi^{(m)}}(-1) 
\to 0 
\end{equation}
for some coherent sheaf $\mathcal{G} \in \Coh(A)$. 
First of all, we have 
\[
\pi^{(m)}_{*}\mathcal{F}
=\pi^{(\frac{1}{m})}_{*}\mathcal{O}_{\pi^{(\frac{1}{m})}}(a) 
=\Sym^a(\mathcal{O}_{A} \oplus L^{\frac{1}{m}}) 
=\mathcal{O}_{A} \oplus L^{\frac{1}{m}} \oplus \cdots \oplus L^{\frac{a}{m}}. 
\]

Next we will show that 
$\mathcal{G}$ is a direct sum of line bundles. 
Applying the functor 
$\pi^{(m)}_{*}(- \otimes \mathcal{O}_{\pi^{(m)}}(1))$ 
to the exact sequence (\ref{ses}), we have 
\begin{equation*}
\xymatrix{
&0 \to \Sym^a(\mathcal{O}_{A} \oplus L^{\frac{1}{m}}) 
\otimes (\mathcal{O}_A \oplus L^{m}) 
\ar[r]^{\beta} &\Sym^{a+m^2}(\mathcal{O}_{A} \oplus L^{\frac{1}{m}}) 
\to \mathcal{G} \to 0. 
}
\end{equation*}

Note that the vector bundles 
$\Sym^a(\mathcal{O}_{A} \oplus L^{\frac{1}{m}}) 
\otimes (\mathcal{O} \oplus L^{m})$ 
and 
$\Sym^{a+m^2}(\mathcal{O}_{A} \oplus L^{\frac{1}{m}})$ 
are the direct sums of line bundles. 
By the definition of the morphism $g_{m}$, 
the map $\beta$ is the natural inclusion 
as the direct summand. 
Hence $\mathcal{G}$ is isomorphic 
to the vector bundle 
\[
L^{\frac{a+1}{m}} \oplus L^{\frac{a+2}{m}} \oplus \cdots 
\oplus L^{\frac{m^2-2}{m}} \oplus L^{\frac{m^2-1}{m}}.
\]

It remains to show that 
the exact sequence (\ref{ses}) splits. 
Let us first compute the $\Ext$-group: 
\begin{align*}
&\quad \Ext^1\left(
\pi^{(m)*}\mathcal{G} \otimes \mathcal{O}_{\pi^{(m)}}(-1), 
\pi^{(m)*} \pi^{(m)}_{*} \mathcal{F} 
\right) \\ 
&\cong 
H^1\left(X^{(m)}, 
\pi^{(m)*}\left(
\mathcal{G}^{\vee} \otimes \pi^{(m)}_{*} \mathcal{F}
\right) 
\otimes \mathcal{O}_{\pi^{(m)}}(1)
\right) \\ 
&\cong 
H^1\left(A, 
\mathcal{G}^{\vee} \otimes \pi^{(m)}_{*} \mathcal{F} 
\otimes \pi^{(m)}_{*}\mathcal{O}_{\pi^{(m)}}(1)
\right) \\
&\cong 
\bigoplus_{\eta} H^1(A, L^{\frac{\eta}{m}}). 
\end{align*}

Here, the last isomorphism follows from 
the descriptions of 
$\mathcal{G}, \pi^{(m)}_{*}\mathcal{F}$ 
given above, together with the equality 
$\pi^{(m)}_{*}\mathcal{O}_{\pi^{(m)}}(1)
=\mathcal{O}_{A} \oplus L^{m}$. 
Furthermore, by these descriptions, 
we can see that $\eta \neq 0$. 
Hence if $L$ is not a torsion line bundle, 
we have the vanishing 
$\Ext^1\left(
\pi^{(m)*}\mathcal{G} \otimes \mathcal{O}_{\pi^{(m)}}(-1), 
\pi^{(m)*} \pi^{(m)}_{*} \mathcal{F} 
\right)=0$ 
and thus the sequence (\ref{ses}) splits. 
Assume that $L^{\frac{1}{m}}$ 
is $l$-torsion, i.e., 
$(L^{\frac{1}{m}})^l \cong \mathcal{O}_{A}$. 
Assume also that 
$\Ext^1\left(
\pi^{(m)*}\mathcal{G} \otimes \mathcal{O}_{\pi^{(m)}}(-1), 
\pi^{(m)*} \pi^{(m)}_{*} \mathcal{F} 
\right)$ 
contains 
$H^1(A, (L^{\frac{1}{m}})^l) 
\cong H^1(A, \mathcal{O}_{A})$ 
as a direct summand. 
Suppose for a contradiction 
that the sequence (\ref{ses}) 
does not split. 
This is only possible if 
the class 
$\xi \in \Ext^1\left(
\pi^{(m)*}\mathcal{G} \otimes \mathcal{O}_{\pi^{(m)}}(-1), 
\pi^{(m)*} \pi^{(m)}_{*} \mathcal{F} 
\right)$ 
corresponding to the extension (\ref{ses}) 
has the non-zero component 
\[
0 \neq \xi_0 \in H^1(A, \mathcal{O}_A) \subset 
\Ext^1\left(
\pi^{(m)*}\mathcal{G} \otimes \mathcal{O}_{\pi^{(m)}}(-1), 
\pi^{(m)*} \pi^{(m)}_{*} \mathcal{F} 
\right). 
\]

Consider the following Cartesian diagram: 
\[
\xymatrix{
&\mathbb{P}^1 \times A \ar_{u}[d] \ar^{\underline{m}^2 \times \id_{A}}[rr] & &\mathbb{P}^1 \times A \ar_{v}[d] \ar^{\pi_{0}}[r] &A \ar_{\underline{l}}[d]\\
&X^{(\frac{1}{m})} \ar_{g_{m}}[rr] & &X^{(m)} \ar_{\pi^{(m)}}[r] & A.
} 
\]
Since the morphisms $u$ and $v$ 
are flat, we have an isomorphism 
\begin{align*}
v^{*}g_{m*}\mathcal{O}_{\pi^{(\frac{1}{m})}}(a) 
&\cong
(\underline{m}^2 \times \id_{A})_{*}u^*\mathcal{O}_{\pi^{(\frac{1}{m})}}(a) \\
&\cong (\underline{m}^2 \times \id_{A})_{*}\mathcal{O}_{\pi_{0}}(a) 
\end{align*}
and hence it is a direct sum of line bundles 
by the usual toric Frobenius splitting on 
$\mathbb{P}^1$. 
This means that the class $\xi$ 
is mapped to $0$ via the morphism 
$v^* \colon \Ext \to \Ext(v)$, 
where we define 
\begin{align*}
&\Ext:=\Ext^1_{X^{(m)}}\left(
\pi^{(m)*}\mathcal{G} \otimes \mathcal{O}_{\pi^{(m)}}(-1), 
\pi^{(m)*} \pi^{(m)}_{*} \mathcal{F} 
\right), \\
&\Ext(v):=\Ext^1_{\mathbb{P}^1 \times A}\left(
v^*\left( \pi^{(m)*}\mathcal{G} \otimes \mathcal{O}_{\pi^{(m)}}(-1) \right), 
v^*\left( \pi^{(m)*} \pi^{(m)}_{*} \mathcal{F} \right) 
\right). 
\end{align*}

On the other hand, 
$\Ext(v)$ has the direct summand 
$H^1(A, \underline{l}^*\mathcal{O}_A)$ and 
we have the commutative diagram 
\[
\xymatrix{
&\Ext \ar[r]^{v^*} \ar@{->>}[d]_{p} & \Ext(v) \ar@{->>}[d]^{q} \\
&H^1(A, \mathcal{O}_A) \ar[r]_{\underline{l}^*} 
&H^1(A, \underline{l}^*\mathcal{O}_A), 
}
\]
where the vertical arrows are 
the projections to the direct summands, 
and the bottom morphism 
$\underline{l}^* \colon 
H^1(A, \mathcal{O}_{A}) 
\to H^1(A, \underline{l}^*\mathcal{O}_{A})$ 
is an isomorphism. 
In particular, we have 
$0=q \circ v^*(\xi)
=\underline{l}^* \circ p(\xi)
=\underline{l}^*(\xi_0) \neq 0$, 
a contradiction. 

(2) Let 
$X=\mathbb{P}_{C}(\mathcal{O}_{C} \oplus L_{1} \oplus L_{2})$ 
be as in Theorem \ref{cp} (7). 
As in (1), we may assume 
$M=\mathcal{O}_{\pi^{(\frac{1}{m})}}(a)$ and 
$0 \leq a < m^2$. 
Let $\mathcal{F}:=g_{m*}M$. 
As similar to the case (1), 
we have the following exact sequences: 
\begin{equation}
\label{ses2}
\begin{aligned}
& 0 \to \pi^{(m)*}\pi^{(m)}_{*}\mathcal{F} 
\xrightarrow{\alpha} \mathcal{F} 
\to \mathcal{F}' \otimes \mathcal{O}_{\pi^{(m)}}(-1)
\to 0, \\
& 0 \to \pi^{(m)*}\pi^{(m)}_{*}\mathcal{F}' 
\xrightarrow{\beta} \mathcal{F}' 
\to \pi^{(m)*}\mathcal{G} \otimes \mathcal{O}_{\pi^{(m)}}(-1) 
\to 0, 
\end{aligned}
\end{equation}
which correspond to the toric Frobenius splitting 
\[
\underline{m}^2_{*}\mathcal{O}_{\mathbb{P}^2}(a) 
\cong 
\mathcal{O}_{\mathbb{P}^2}^{\oplus k_{0}} 
\oplus \mathcal{O}_{\mathbb{P}^2}(-1)^{\oplus k_{1}} 
\oplus \mathcal{O}_{\mathbb{P}^2}(-2)^{\oplus k_{2}}
\]
on the fiber of $\pi^{(m)}$. 
Let us explain the construction of the exact sequences 
(\ref{ses2}) in detail. 
We can first show that 
the morphism $\alpha$ is injective  as before. 
We set 
$\mcF':=\Coker(\alpha) \otimes \mcO_{\pi^{(m)}}(1)$. 
By restricting to the fiber, we see that 
$\mathcal{F}'|_{\mathbb{P}^2} \cong 
\mathcal{O}_{\mathbb{P}^2}^{\oplus k_{1}} 
\oplus \mathcal{O}_{\mathbb{P}^2}(-1)^{\oplus k_{2}}$, 
and thus the adjoint map $\beta$ is again injective. 
By construction, its cokernel 
$\Coker(\beta)$ 
is semi-orthogonal to the categories 
$\dL\pi^{(m)*}D^b(\bP^2) \otimes \mcO_{\pi^{(m)}}(k)$ 
for $k=0, 1$. 
Hence there exists a sheaf $\mcG$ such that 
$\Coker(\beta) \cong 
\pi^{(m)*}\mathcal{G} \otimes \mathcal{O}_{\pi^{(m)}}(-1)$. 

For $k=1, 2$, let us apply the functor 
$\pi^{(m)}_*\left(- \otimes \mcO_{\pi^{(m)}}(k) \right)$ 
to the first exact sequence in (\ref{ses2}). 
Then we get the exact sequence 
\begin{align*}
0 \to 
\Sym^a&\left(\mcO_C \oplus L_1^{\frac{1}{m}} \oplus L_2^{\frac{1}{m}} \right) 
\otimes \Sym^k\left(\mcO_C \oplus L_1^m \oplus L_2^m \right) \\
&\to 
\Sym^{a+km^2}\left(\mcO_C \oplus L_1^{\frac{1}{m}} \oplus L_2^{\frac{1}{m}} \right)  
\to \pi^{(m)}_*\left(\mcF' \otimes \mcO_{\pi^{(m)}}(k-1) \right) \to 0, 
\end{align*}
which shows that 
the vector bundle 
$\pi^{(m)}_*\left(\mcF' \otimes \mcO_{\pi^{(m)}}(k-1) \right)$
splits into a direct sum of line bundles 
$L_1^{\frac{j_1}{m}} \otimes L_2^{\frac{j_2}{m}}$ 
with $a+1 \leq j_1, j_2 \leq m^2-1$. 
Hence applying the functor 
$\pi^{(m)}_*\left(- \otimes \mcO_{\pi^{(m)}}(1) \right)$ 
to the second exact sequence in (\ref{ses2}), 
we see that the bundle 
$\mcG$ is also a direct sum of line bundles. 

It remains to show that 
the exact sequences in (\ref{ses2}) split. 
Assume that the Ext-groups 
\begin{equation} \label{eq:extP2bdl}
\Ext^1\left( 
\mcF' \otimes \mcO_{\pi^{(m)}}(-1), 
\pi^{(m)*}\pi^{(m)}_*\mcF 
\right), \quad 
\Ext^1\left(
\pi^{(m)*}\mcG \otimes \mcO_{\pi^{(m)}}(-1), 
\pi^{(m)*}\pi^{(m)}_*\mcF'
\right) 
\end{equation}
do not vanish, which is possible only when 
$L_1^{\frac{l_1}{m}} \cong L_{2}^{\frac{l_2}{m}}$ 
for some integers $l_1, l_2 \in \mathbb{Z}$. 
By pulling back the $\bP^2$-bundles 
$X, X^{(m)}$, and $X^{(\frac{1}{m})}$ 
via the multiplication map 
$\underline{l_1l_2} \colon C \to C$, 
the problem is reduced to the case when 
$X=\bP_C\left(\mcO_C \oplus L^a \oplus L^b \right)$ 
for some line bundle $L \in \Pic^0(C)$ 
and integers $a, b \in \mathbb{Z}$. 
Now the groups (\ref{eq:extP2bdl}) 
do not vanish only when the line bundle 
$L \in \Pic^0(C)$ is $l$-torsion 
for some integer $l \in \mathbb{Z}$. 
Again by pulling back the bundles 
via the multiplication map 
$\underline{l} \colon C \to C$,
the situation is further reduced 
to the case when $X=\bP^2 \times C$, 
on which the usual toric Frobenius splitting on $\bP^2$ 
proves the sequences (\ref{ses2}) split. 

(3) Let $X=\mathbb{P}_{C}(\mathcal{O}_{C} \oplus L_{1}) 
\times_{C} \mathbb{P}_{C}(\mathcal{O}_{C} \oplus L_{2})$ 
be as in Theorem \ref{cp} (8), 
let $Y_{i}:=\mathbb{P}_{C}(\mathcal{O}_{C} \oplus L_{i})$. 
Then the problem is reduced to showing 
the corresponding statement for $Y^{(\frac{1}{m})} \to Y^{(m)}$. 
The latter follows from the argument as in (1). 
\end{proof}

We also need the following lemma. 

\begin{lem}
\label{ns}
Let $X$ be as in Theorem \ref{cp} 
(6), (7), or (8) 
which is not necessarily split. 
Then by identifying the tautological classes, 
we have the canonical isomorphism $\Phi$ between 
the N{\' e}ron-Severi group of $X$ 
and that of 
$X_0:=\mathbb{P}^1 \times A$, 
$\mathbb{P}^2 \times C$, 
or $\mathbb{P}^1 \times \mathbb{P}^1 \times C$. 
Furthermore, the following statements hold: 
\begin{enumerate}
\item Under the isomorphism $\Phi$, 
their nef cones are preserved. 
\item Under the isomorphism $\Phi$, 
their classes of the canonical divisors are preserved. 
\item If $X$ is split, then 
the isomorphism $\Phi$ is compatible 
with the formula given in Proposition \ref{toric split} 
in the following sense: 
let $M$ (resp. $M_0$) 
be a line bundle on $X$ (resp. $X_0$) 
such that 
$\Phi(\ch_1(M))=\ch_1(M_0)$. 
Then $\Phi$ induces a bijection between sets 
\[
\left\{ \ch_1(M_j) : 
M_j \mbox{ is a direct summand of } g_{m*}M \mbox{ (resp. } g_{m*}M_0 \mbox{)}
\right\}. 
\]
\end{enumerate}
\end{lem}

\begin{proof}
Let $\pi \colon X=\mathbb{P}_{A}(\mathcal{E}) \to A$ 
be as in Theorem \ref{cp} (6), 
where $\mathcal{E}$ is a rank two vector bundle 
fitting into an exact sequence 
\[
0 \to \mathcal{O}_{A} \to \mathcal{E} \to L \to 0.
\]
We only treat this case. 
We have 
$\NS(X)=\mathbb{Z}[h] \oplus \NS(A)$, 
where $h$ is a divisor such that 
$\mathcal{O}(h)=\mathcal{O}_{\pi}(1)$. 
Hence by identifying a class $[h]$, 
$\NS(X)$ is isomorphic to 
$\NS(\mathbb{P}^1 \times A)$. 

(1) We claim that the line bundle 
$M=\mathcal{O}_{\pi}(a) \otimes \pi^*N$ 
on $X$ is nef if and only if 
$a \geq 0$ and $N$ is nef. 
The `if' direction is clear 
since $\mathcal{O}_{\pi}(1)$ is nef.  
Let us prove the converse. 
Let $h \in |\mathcal{O}_{\pi}(1)|$ 
be a section of $\pi$ 
and $f \cong \mathbb{P}^1$ 
be a fiber of $\pi$. 
Then we have 
$M|_{h} \cong N$, 
$M|_{f} \cong \mathcal{O}_{\mathbb{P}^1}(a)$ 
and they are nef, 
which proves the claim. 
This description of the nef cone 
is independent on the choice of $L \in \Pic^0(A)$ 
and on the choice of the extension class 
$[\mathcal{E}] \in \Ext^1(L, \mathcal{O}_{A})$. 

(2) The canonical line bundle on $X$ is 
given as 
$\mathcal{O}(K_{X})=\mathcal{O}(-2h) \otimes \pi^*L$. 
Since $L \in \Pic^0(A)$, 
we have 
$[K_{X}]=-2[h] \in \NS(X)$ 
in the N{\' e}ron-Severi group 
which is independent on 
the choice of $L \in \Pic^0(A)$. 

(3) The statement is trivial 
from the proof of Proposition \ref{toric split}, 
again by noting that 
$\ch_{1}(L)=0$ for $L \in \Pic^0(A)$. 
\end{proof}

Now we can prove our main theorem: 

\begin{proof}[Proof of Theorem \ref{first thm}]
We only give an outline of the proof 
since the argument is same as \cite{kos17}. 
Let $X$ be as in Theorem \ref{first thm}. 
By Proposition \ref{degenerate}, 
we may assume $X$ is split. 
Take a $\bar{\beta}$-stable object $E$ 
and let $\overline{B}:=B+\bar{\beta}(E)\omega$. 

First assume that $\overline{B}$ is a $\mathbb{Q}$-divisor. 
Take an integer $q \in \mathbb{Z}_{>0}$ 
and an integral divisor $D$ such that 
$\overline{B}=\frac{1}{q}D$. 
For each integer $m \in \mathbb{Z}_{>0}$, 
let us consider the morphism $F_{mq}$ 
constructed in Proposition \ref{endo}. 
Let $D^{(\frac{1}{mq})}$ 
be the divisor on $X^{(\frac{1}{mq})}$ 
such that 
$D^{(\frac{1}{mq})}=\Phi^{-1}(D)$ 
in the cohomology. 
Then the Riemann-Roch theorem 
and Lemma \ref{chern comp} 
imply the inequality 
\begin{align*}
m^6q^6\ch^{\overline{B}}_{3}(E)+\mathcal{O}(m^4)
&=\chi\left(
\mathcal{O}, 
F_{mq}^*E \otimes \mathcal{O}\left(-m^2qD^{(\frac{1}{mq})}
\right)
\right) \\
&\leq 
\hom\left(
\mathcal{O}, 
F_{mq}^*E \otimes \mathcal{O}\left(-m^2qD^{(\frac{1}{mq})}
\right)
\right) \\
& \quad +\ext^2\left(
\mathcal{O}, 
F_{mq}^*E \otimes \mathcal{O}\left(
-m^2qD^{(\frac{1}{mq})}
\right)
\right). 
\end{align*} 

We need to prove that the right hand side 
of the above inequality is of order $m^4$. 
By Proposition \ref{approx ext}, 
to prove 
$\hom\left(
\mathcal{O}, 
F_{mq}^*E \otimes \mathcal{O}\left(-m^2qD^{(\frac{1}{mq})}
\right)
\right) =\mathcal{O}(m^4)$, 
it is enough to find 
an ample divisor $H$ such that 
\[
\Hom\left(
\mathcal{O}(H), 
F_{mq}^*E \otimes \mathcal{O}\left(-m^2qD^{(\frac{1}{mq})}
\right)
\right)=0. 
\]

By using the Serre duality 
and the projection formula, 
we have an isomorphism 
\begin{align*}
&\quad \Hom\left(
\mathcal{O}(H), 
F_{mq}^*E \otimes \mathcal{O}\left(-m^2qD^{(\frac{1}{mq})}
\right)
\right) \\
&\cong 
\Hom\left(
\mathcal{O}(-K_{X^{(m)}}) 
\otimes g_{mq*}\mathcal{O}\left(
H+m^2qD^{(\frac{1}{mq})}+K_{X^{(\frac{1}{m})}}
\right), 
h_{mq}^*E
\right). 
\end{align*}
By Propositon \ref{toric split}, 
the vector bundle 
$\mathcal{O}(-K_{X^{(m)}}) 
\otimes g_{mq*}\mathcal{O}\left(
H+m^2qD^{(\frac{1}{mq})}+K_{X^{(\frac{1}{m})}}
\right)$ 
splits into a direct sum of line bundles $M_{j}$. 
Hence it is enough to show the vanishing 
$\Hom(M_{j}, h_{mq}^*E)=0$ 
for all $j$. 
Since we know the tilt stability of $M_{j}$ 
(resp. $h_{m}^*E$) 
by Lemma \ref{tilt stab of lb} 
(resp. \cite[Proposition 6.1]{bms16}), 
it is enough to show the inequality 
$\nu_{0, h_{mq}^*\overline{B}}(M_{j}) 
>\nu_{0, h_{mq}^*\overline{B}}(h_{mq}^*E)=0$ 
and that the line bundles $M_j$ 
(not $M_j[1]$) 
are in the heart 
$\Coh^{ h_{mq}^*\overline{B}}\left(X^{(mq)} \right)$. 
Both of the requirements are satisfied 
if we can show that 
$\ch_{1}^{h_{mq}^*\overline{B}}(M_{j})$ 
is ample (cf. \cite[Lemma 4.2]{kos17}). 
By Lemma \ref{ns}, 
the problem is now reduced to the case 
when $X$ is $\mathbb{P}^1 \times A$, 
$\mathbb{P}^2 \times C$, 
or $\mathbb{P}^1 \times \mathbb{P}^1 \times C$, 
which is treated in \cite[Lemma 4.6]{kos17}. 
The estimate of $\ext^2$ will also be reduced to 
\cite[Lemma 4.7]{kos17}. 

When $\overline{B}$ is not a $\mathbb{Q}$-divisor 
but an $\mathbb{R}$-divisor, 
we can argue as in \cite[Subsection 4.3]{kos17} 
by using Dirichlet approximation theorem. 
\end{proof}

\section{Proof of Theorem \ref{second thm}}
\label{remaining}
In this section, we will treat 
$\pi \colon 
X:=\mathbb{P}_{\mathbb{P}^2}(\mathcal{T}_{\mathbb{P}^2}) 
\to \mathbb{P}^2$. 
Recall that $X$ is isomorphic to 
a $(1, 1)$-divisor in 
$\mathbb{P}^2 \times \mathbb{P}^2$ 
and hence has two projections to 
$\mathbb{P}^2$: 
\begin{equation}
\label{symm}
\xymatrix{
& &X \ar[ld]_{\pi} \ar[rd]^{\sigma}& \\
&\mathbb{P}^2 & &\mathbb{P}^2. 
}
\end{equation}
Let $h_{1}, h_{2}$ be nef divisors on $X$ 
such that 
$\mathcal{O}(h_{1})=\pi^*\mathcal{O}_{\mathbb{P}^2}(1)$, 
$\mathcal{O}(h_{2})=\sigma^*\mathcal{O}_{\mathbb{P}^2}(1)$. 
Then any line bundle on $X$ can be written as 
$\mathcal{O}(a, b):=
\mathcal{O}(ah_{1}) \otimes \mathcal{O}(bh_{2})$ 
with 
$a, b \in \mathbb{Z}$. 
In this notation, we have 
$\mathcal{O}_{\pi}(1)=\mathcal{O}(1, 1)$. 

Fix an ample divisor $H=ah_{1}+bh_{2}$ 
with $a, b \in \mathbb{Z}_{>0}$. 
For a positive real number $\alpha >0$, 
let $\omega:=\alpha H$. 
We will mainly consider the following 
central charge and heart: 
\[
Z_{\alpha, 0, s}:=
-\ch_{3}+s\alpha^2H^2.\ch_{1}
+i\left(
\alpha H.\ch_{2}-\frac{1}{6}\alpha^3H^3\ch_{0}
\right), 
\]
and $\mathcal{A}_{\alpha, 0}:=\mathcal{A}_{\omega, 0}$. 

First recall the following result 
due to \cite{bms16, bmt14a}. 

\begin{thm}
Fix a positive real number $\alpha >0$. 
Conjecture \ref{original} holds 
for $(X, \alpha H, B=0)$ if and only if 
for every $s > \frac{1}{18}$, 
the pair $(Z_{\alpha, 0, s}, \mathcal{A}_{\alpha, 0})$ 
is a stability condition on $D^b(X)$. 
\end{thm}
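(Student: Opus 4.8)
The plan is to verify the two defining properties of a Bridgeland stability condition for $(Z_{\alpha,0,s},\mathcal A_{\alpha,0})$ — that $Z_{\alpha,0,s}$ is a stability function on the heart and that it satisfies the Harder--Narasimhan property — and to pin down exactly when the stability-function property can fail. The starting observation is that the imaginary part $\Im Z_{\alpha,0,s}=\alpha H.\ch_2-\frac16\alpha^3H^3.\ch_0=v^0_2-\frac16 v^0_0$ is independent of $s$ and equals $v^0_1\cdot\nu_{\alpha H,0}$. By the construction of the double-tilted heart it is therefore nonnegative on $\mathcal A_{\alpha,0}$, so the only place the stability-function property can fail is on the nonzero objects $E\in\mathcal A_{\alpha,0}$ with $\Im Z_{\alpha,0,s}(E)=0$, where one must have $\Re Z_{\alpha,0,s}(E)<0$.

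First I would classify these objects. An object of $\mathcal A_{\alpha,0}$ with vanishing imaginary part is an iterated extension of two kinds of pieces: zero-dimensional torsion sheaves $T$, for which $v^0_1=v^0_2=0$ and $\Re Z_{\alpha,0,s}(T)=-\ch_3(T)=-\mathrm{length}(T)<0$ automatically; and shifts $F[1]$ of $\nu_{\alpha H,0}$-semistable objects $F$ of slope zero with $v^0_1(F)>0$. Since $\ch(F[1])=-\ch(F)$, for the latter I compute
\[
\Re Z_{\alpha,0,s}(F[1])=\ch_3(F)-s\,\alpha^2H^2.\ch_1(F)=v^0_3(F)-s\,v^0_1(F),
\]
so the stability-function property for the whole family $\{s>\frac1{18}\}$ is equivalent to the single numerical inequality $v^0_3(F)\le\frac1{18}v^0_1(F)$ for every such $F$: one implication follows by letting $s\downarrow\frac1{18}$, the other from $v^0_1(F)>0$ together with $s>\frac1{18}$.

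The engine of the proof is the elementary computation identifying this bound with the Bogomolov--Gieseker inequality. On the slope-zero locus $v^0_2=\frac16 v^0_0$ one substitutes into $\overline{\Delta}_{\alpha H,0}+6\overline{\nabla}_{\alpha H,0}=(v^0_1)^2-2v^0_0v^0_2+12(v^0_2)^2-18v^0_1v^0_3$; the $v^0_0$-terms cancel, leaving $v^0_1\bigl(v^0_1-18v^0_3\bigr)$. As $v^0_1(F)>0$, the inequality $\overline{\Delta}+6\overline{\nabla}\ge 0$ of Conjecture \ref{original} is exactly $v^0_3(F)\le\frac1{18}v^0_1(F)$, which is where the threshold $\frac1{18}$ originates and why strict negativity of $\Re Z$ forces $s>\frac1{18}$. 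Since $v^0_1$ and $v^0_3$ are additive, the quantity $v^0_3-\frac1{18}v^0_1$ is additive in short exact sequences, so the bound for a slope-zero semistable $F$ reduces to the same bound on its $\nu_{\alpha H,0}$-stable Jordan--H\"older factors; this matches the formulation of Conjecture \ref{original}, which is stated for stable objects. To upgrade the stability-function property to a genuine stability condition I would invoke the general machinery of \cite{bmt14a} (the support property and local finiteness), which supplies the Harder--Narasimhan property once positivity is known, and to pass between the single critical-slope inequality directly controlled by the heart $\mathcal A_{\alpha,0}$ and the all-slopes form of Conjecture \ref{original} I would use the reduction theorems already recalled, namely the equivalence of Conjectures \ref{original} and \ref{bms}, which expresses the full tilt-stability inequality through its critical-slope instances.

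The step I expect to be the main obstacle is precisely this last passage, together with the Harder--Narasimhan property: the positivity computation only constrains slope-zero tilt-semistable objects for the fixed pair $(\alpha H,0)$, whereas Conjecture \ref{original} quantifies over all slopes, so the genuine work — carried out in \cite{bms16,bmt14a} — is the structural result that the double-tilted heart, equipped with the support property, assembles these critical-slope bounds into an honest stability condition, and conversely that the existence of the stability condition detects the inequality.
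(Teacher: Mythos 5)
Your unpacking of the first half of the argument is correct and matches the substance of what the paper obtains by citing \cite[Corollary 5.2.4]{bmt14a}: the classification of objects of $\mathcal{A}_{\alpha,0}$ with vanishing imaginary part as extensions of zero-dimensional sheaves and shifts $F[1]$ of slope-zero tilt-semistable objects, the computation $\Re Z_{\alpha,0,s}(F[1])=v^0_3(F)-s\,v^0_1(F)$, and the identity $\overline{\Delta}_{\alpha H,0}+6\overline{\nabla}_{\alpha H,0}=v^0_1\bigl(v^0_1-18v^0_3\bigr)$ on the locus $v^0_2=\tfrac16 v^0_0$, which is exactly where the threshold $\tfrac1{18}$ comes from. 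The genuine gap is in your final step. To go from the critical-slope inequality (for $\nu_{\alpha,0}$-stable objects with $\nu_{\alpha,0}=0$ at the \emph{fixed} pair $(\alpha H,0)$) to the all-slopes statement of Conjecture \ref{original}, you invoke the equivalence of Conjectures \ref{original} and \ref{bms}. That equivalence relates the all-slopes inequality to the $\bar{\beta}$-limit form $\ch^{B+\bar{\beta}(E)\omega}_{3}(E)\le 0$ for $\bar{\beta}$-stable objects, not to the $\nu_{\alpha,0}=0$ instances you have actually established; from the critical-slope inequality at a single $(\alpha H,0)$ you cannot deduce Conjecture \ref{bms} and then return. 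The correct tool, and the one the paper cites, is \cite[Theorem 4.2]{bms16}, the deformation/wall-crossing statement that directly identifies the critical-slope form with the general inequality. You do flag this passage as the main obstacle and point to the right papers, so the issue is one of invoking the wrong reduction theorem rather than a conceptual error, but as literally written the last implication is not justified.
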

\begin{proof}
By \cite[Corollary 5.2.4]{bmt14a}, 
the pair $(Z_{\alpha, 0, s}, \mathcal{A}_{\alpha})$ 
is a stability condition for every $s > \frac{1}{18}$ 
if and only if for every $\nu_{\alpha, 0}$-stable object 
$E \in \Coh^{\alpha H, 0}(X)$ with 
$\nu_{\alpha, 0}(E)=0$, we have 
$\ch_{3} \leq \frac{1}{18}\alpha^2H^2.\ch_{1}(E)$. 
Then the latter is equivalent to 
Conjecture \ref{original} 
by \cite[Theorem 4.2]{bms16}. 
\end{proof}

\begin{defin}
For a fixed ample divisor 
$H=ah_{1}+bh_{2}$, 
we define a real number $\alpha_{0}>0$ as 
\[
\alpha_{0}:=\min\left\{
\sqrt{\frac{1}{a(a+b)}}, 
\sqrt{\frac{18}{a^2+6ab+b^2}}
\right\}. 
\]
\end{defin}

The goal of this subsection 
is to prove the following: 
\begin{prop}
\label{goal}
Let $H=ah_{1}+bh_{2}$ be an ample divisor on $X$ 
with $b>a$. 
Then for every $0 < \alpha < \alpha_{0}$ 
and $s>\frac{1}{18}$, 
the pair $(Z_{\alpha, 0, s}, \mathcal{A}_{\alpha, 0})$ 
is a stability condition on $X$. 
In particular, Conjecture \ref{original} 
holds for $(X, \alpha H, B=0)$. 
\end{prop}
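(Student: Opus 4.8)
The plan is to prove the stability condition property by reducing it, via the cited criterion of Bayer--Macr{\`i}--Toda \cite{bmt14a} and the threshold $s > \tfrac{1}{18}$, to verifying Conjecture \ref{original} for $(X, \alpha H, 0)$ in the stated range $0 < \alpha < \alpha_0$. Concretely, it suffices to establish the inequality $\ch_3(E) \le \tfrac{1}{18}\alpha^2 H^2.\ch_1(E)$ for every $\nu_{\alpha,0}$-stable object $E \in \Coh^{\alpha H, 0}(X)$ satisfying $\nu_{\alpha, 0}(E) = 0$. The geometric input I would exploit is that $X = \mathbb{P}_{\mathbb{P}^2}(\mathcal{T}_{\mathbb{P}^2})$ is a $\mathbb{P}^1$-bundle over $\mathbb{P}^2$ (indeed a $(1,1)$-divisor in $\mathbb{P}^2 \times \mathbb{P}^2$), so its derived category carries a full exceptional collection built from the line bundles $\mathcal{O}(a,b)$; this is the replacement, in the non-split $\mathbb{P}^2$-base setting, for the finite-morphism technique used in Section \ref{deform and endo}.

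First I would record the intersection numbers on $X$ in terms of $h_1, h_2$ (using $h_1^3 = h_2^3 = 0$, $h_1^2 h_2 = h_1 h_2^2 = 1$ from the $(1,1)$-divisor description) and write out $\ch_i^{\,0}$, the slope $\mu_{\alpha H, 0}$, and the tilt slope $\nu_{\alpha, 0}$ explicitly as functions of $\alpha$ and the numerical invariants $(\ch_0, \ch_1, \ch_2, \ch_3)$. Next I would classify the numerical types of $\nu_{\alpha, 0}$-stable objects $E$ of tilt slope zero that can occur in the window $0 < \alpha < \alpha_0$. The role of $\alpha_0$, defined as the minimum of $\sqrt{2/a(a+b)}$ and $\sqrt{18/(a^2+6ab+b^2)}$, is precisely to make the two relevant wall-type bounds hold simultaneously; I would check that for such small $\alpha$ the tilt-semistable objects of slope zero are controlled by the line bundles $\mathcal{O}(a,b)$ and their shifts, whose tilt stability is guaranteed by Lemma \ref{tilt stab of lb} (applicable because every effective divisor on $X$ is nef by Proposition \ref{eff nef}, so $\omega.D^2 \ge 0$).

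With the numerical types in hand, I would prove the inequality $\overline{\Delta}_{\alpha H, 0}(E) + 6\overline{\nabla}_{\alpha H, 0}(E) \ge 0$ by a direct estimate, treating the candidate destabilizing subobjects and quotients via Harder--Narasimhan and Jordan--H\"older filtrations inside $\Coh^{\alpha H, 0}(X)$. The hypothesis $b > a$ should enter here to fix the relative sizes of the two nef generators and to ensure the extremal configurations are the expected ones; I would verify that at the boundary cases the inequality becomes an equality, confirming the sharpness of $\alpha_0$. The main obstacle I anticipate is the classification step: controlling all $\nu_{\alpha,0}$-stable objects of slope zero requires a wall-and-chamber analysis on the $(\alpha, \beta)$-plane, and one must rule out exotic stable objects not coming from the exceptional collection. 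I expect to handle this by combining the Bogomolov-type discriminant inequality for tilt-semistable objects (available from \cite{bmt14a}) with the explicit bounds encoded in $\alpha_0$, so that any hypothetical destabilizer would violate $\overline{\Delta}_{\alpha H, 0} \ge 0$ or exit the chamber $0 < \alpha < \alpha_0$. Once the inequality is established for $(X, \alpha H, 0)$ throughout this range, Proposition \ref{goal} follows, and feeding this into Theorem \ref{li red} will upgrade it to the full range of Theorem \ref{second thm}.
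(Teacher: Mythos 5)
Your overall framing inverts the logic of the actual argument, and the step you lean on---classifying all $\nu_{\alpha,0}$-stable objects of tilt slope zero and showing they are ``controlled by the line bundles $\mathcal{O}(a,b)$ and their shifts''---is a genuine gap. There is no such classification: tilt-stable objects of slope zero include ideal sheaves of curves and points, torsion sheaves, and many two-term complexes with no relation to the exceptional collection, and the Bogomolov discriminant inequality together with the bound $\alpha<\alpha_0$ does not rule these out. Indeed, if one could directly verify $\ch_3(E)\leq\frac{1}{18}\alpha^2H^2.\ch_1(E)$ for all such $E$ by enumeration, the entire difficulty of Conjecture \ref{original} would evaporate; this is precisely the inequality that nobody knows how to attack head-on.

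The paper's proof runs the reduction in the opposite direction. It never touches individual tilt-stable objects. Instead it takes the full Ext-exceptional collection $\mathcal{O}(-1,-1)[3],\dots,\mathcal{O}(1,0)$, forms the quiver-type heart $\mathcal{C}$ as its extension closure, and verifies the three hypotheses of Proposition \ref{aux} (that is, \cite[Proposition 8.1.1]{bmt14a}): Lemma \ref{hearts} shows $\mathcal{C}\subset\langle\mathcal{A}_{\alpha,0},\mathcal{A}_{\alpha,0}[1]\rangle$ for $\alpha<\alpha_0$ (this is where $\alpha_0$ actually enters, via the tilt slopes of the six line bundles, whose stability comes from Lemma \ref{tilt stab of lb}); Lemma \ref{charge} shows $Z_{\alpha,0,1/18}$ maps $\mathcal{C}$ into a closed half-plane; and Lemma \ref{st of str} shows $\mathcal{O}_x\in\mathcal{C}$ with the required positivity for proper subobjects, via an explicit quiver-representation analysis of the extension $\mathcal{O}_{L_y}\to\mathcal{O}_x\to\mathcal{O}_{L_y}(-1)[1]$. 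Proposition \ref{aux} then yields the stability condition for all $s>\frac{1}{18}$, and the BG-type inequality for $(X,\alpha H,0)$ is obtained as a \emph{consequence} (via the equivalence recorded just before Proposition \ref{goal}), not as an input. You correctly identified the exceptional collection as the relevant geometric structure and correctly cited the $s>\frac{1}{18}$ threshold, but you used the collection only as background; the missing idea is that it must be promoted to an auxiliary heart satisfying the Bayer--Macr{\`i}--Toda criterion, which is what replaces the impossible classification step.
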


First we prove that the above proposition 
implies Theorem \ref{second thm}. 

\begin{proof}[Proof of Theorem \ref{second thm}]
Let $H=ah_{1}+bh_{2}$ be an ample divisor. 
By the symmetry of the diagram (\ref{symm}), 
we may assume that $b \geq a$. 
Furthermore, if $a=b$, 
then the result is already known 
due to \cite{bmsz17}. 
Now we can assume that $b>a$ 
and then by Theorem \ref{li red}, 
Proposition \ref{goal} implies 
Theorem \ref{second thm}. 
\end{proof}

To prove Proposition \ref{goal}, 
we use the following result due to 
the paper \cite{bmt14a}, 
and follow the arguments 
in \cite{mac14, sch14}. 

\begin{prop}[{\cite[Proposition 8.1.1]{bmt14a}}]
\label{aux}
Assume there exists a heart $\mathcal{C}$ 
in $D^b(X)$ 
with the following properties: 
\begin{enumerate}
\item There exist 
$\phi_{0} \in (0, 1)$ and 
$s_{0} \in \mathbb{Q}$ 
such that 
\[
Z_{\alpha, 0, s_{0}}(\mathcal{C}) 
\subset 
\left\{
r\exp(\pi\phi i) : 
r \geq 0, \phi_{0} \leq \phi \leq \phi_{0}+1
\right\}. 
\]
\item $\mathcal{C} \subset 
\left\langle
\mathcal{A}_{\alpha, 0}, \mathcal{A}_{\alpha, 0}[1]
\right\rangle$. 
\item For any $x \in X$, we have 
$\mathcal{O}_{x} \in \mathcal{C}$ and, 
for all non-zero proper subobjects 
$C \subset \mathcal{O}_{x}$ in 
$\mathcal{C}$, 
we have 
$\Im Z_{\alpha, 0, s_{0}}(C)>0$. 
\end{enumerate}
Then for all $s>s_{0}$, 
the pair $(Z_{\alpha, 0, s}, \mathcal{A}_{\alpha, 0})$ 
is a stability condition on $D^b(X)$. 
\end{prop}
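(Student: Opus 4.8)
The plan is to verify directly the two axioms in the definition of a stability condition recalled above: that $Z_{\alpha,0,s}$ is a stability function on $\mathcal{A}_{\alpha,0}$ and that it satisfies the Harder--Narasimhan property. I would begin with the structural observation that the imaginary part $\Im Z_{\alpha,0,s}=\alpha H.\ch_2-\frac{1}{6}\alpha^3H^3\ch_0$ does not depend on $s$ and is, by the very construction of the double-tilted heart, nonnegative on $\mathcal{A}_{\alpha,0}$. Consequently the stability-function inequality can only fail on the ``boundary'' consisting of those $0\neq E\in\mathcal{A}_{\alpha,0}$ with $\Im Z_{\alpha,0,s}(E)=0$, and for these I must show $\Re Z_{\alpha,0,s}(E)<0$. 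Every such $E$ is an iterated extension of skyscrapers $\mathcal{O}_x$ and of shifts $F[1]$ of $\nu_{\alpha,0}$-semistable sheaves $F\in\Coh^{\alpha H,0}(X)$ with $\nu_{\alpha,0}(F)=0$, so the entire problem reduces to controlling the sign of $\Re Z_{\alpha,0,s}$ on these two kinds of objects. This is exactly the point where the hypotheses on $\mathcal{C}$ enter.

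Next I would exploit conditions (1) and (2). By (1) the rotated charge $e^{-i\pi\phi_0}Z_{\alpha,0,s_0}$ sends $\mathcal{C}\setminus\{0\}$ into $\mathcal{H}\cup\mathbb{R}_{<0}$, so it is a stability function on $\mathcal{C}$; together with noetherianity of $\mathcal{C}$ this makes $(Z_{\alpha,0,s_0},\mathcal{C})$ a stability condition, and in particular every object of $\mathcal{C}$ has a well-defined $Z_{\alpha,0,s_0}$-phase lying in $[\phi_0,\phi_0+1]$. Condition (2) says $\mathcal{C}\subset\langle\mathcal{A}_{\alpha,0},\mathcal{A}_{\alpha,0}[1]\rangle$, which, since both are hearts of bounded t-structures, forces $\mathcal{C}$ to be a tilt of $\mathcal{A}_{\alpha,0}$ at a torsion pair; thus each object of $\mathcal{A}_{\alpha,0}$, in particular each boundary object $F[1]$, can be resolved into its $\mathcal{C}$-cohomology, whose pieces carry the phase constraint just described. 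Finally, condition (3) shows that each skyscraper $\mathcal{O}_x$, for which $Z_{\alpha,0,s_0}(\mathcal{O}_x)=-\ch_3(\mathcal{O}_x)\in\mathbb{R}_{<0}$ has phase $1$, is $Z_{\alpha,0,s_0}$-stable in $\mathcal{C}$ of this extremal phase: any proper subobject has strictly positive imaginary part, hence strictly smaller phase. This identifies the skyscrapers as the stable objects of phase $1$ in $\mathcal{C}$.

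The central step is then the boundary positivity. For a skyscraper it is immediate, since $\Re Z_{\alpha,0,s}(\mathcal{O}_x)=-\ch_3(\mathcal{O}_x)<0$ for every $s$. For $E=F[1]$ with $F$ $\nu_{\alpha,0}$-semistable and $\nu_{\alpha,0}(F)=0$ one has $Z_{\alpha,0,s_0}(F[1])=-Z_{\alpha,0,s_0}(F)\in\mathbb{R}$, and I would use the $\mathcal{C}$-decomposition of $F[1]$ together with the extremal stability of the $\mathcal{O}_x$ to show this real number is $\leq0$, that is $\ch_3(F)\leq s_0\,\alpha^2H^2.\ch_1(F)$. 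Granting this, the passage from $s_0$ to $s>s_0$ is purely numerical: on the boundary the $s$-coefficient of $\Re Z_{\alpha,0,s}$ equals $\alpha^2H^2.\ch_1(E)\leq0$ (it is $0$ on $\mathcal{O}_x$ and $-\alpha^2H^2.\ch_1(F)<0$ on $F[1]$), so $\Re Z_{\alpha,0,s}(E)$ is non-increasing in $s$ and the strict inequality $\Re Z_{\alpha,0,s}(E)<0$ holds for all $s>s_0$, yielding the stability-function axiom. For the Harder--Narasimhan property I would invoke Bridgeland's general criterion, using that $\mathcal{A}_{\alpha,0}$ is noetherian and that $\Im Z_{\alpha,0,s}$ takes values in a discrete subgroup of $\mathbb{R}$ (because $\alpha^2\in\mathbb{Q}$ and the Chern-character lattice is finitely generated); alternatively the filtrations can be transported from those in $\mathcal{C}$ across the tilt of (2).

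The main obstacle is the boundary positivity itself, namely $\ch_3(F)\leq s_0\,\alpha^2H^2.\ch_1(F)$ for $\nu_{\alpha,0}$-semistable $F$ with $\nu_{\alpha,0}(F)=0$, since this is precisely the Bogomolov--Gieseker-type bound that the construction is designed to encode and the whole force of the hypotheses on $\mathcal{C}$ is spent here. The delicate technical points are transferring phase and stability information faithfully across the tilt relating $\mathcal{C}$ and $\mathcal{A}_{\alpha,0}$, controlling the $\mathcal{C}$-cohomology of the shifted sheaves $F[1]$, and ensuring the strict inequality is not lost as $s\to s_0^{+}$ — which is exactly why condition (3) demands \emph{strict} positivity of the imaginary part on all proper subobjects of the skyscrapers.
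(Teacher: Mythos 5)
First, a point of comparison: the paper itself gives no proof of Proposition \ref{aux} --- it is imported verbatim from \cite[Proposition~8.1.1]{bmt14a} --- so your proposal can only be measured against the argument in that reference. Your reduction framework does match the known one: positivity of $Z_{\alpha,0,s}$ on $\mathcal{A}_{\alpha,0}$ only needs to be checked on objects with vanishing imaginary part, which are iterated extensions of zero-dimensional sheaves and of shifts $F[1]$ with $F$ $\nu_{\alpha,0}$-semistable of slope zero; the passage from $s_{0}$ to $s>s_{0}$ is the numerical monotonicity you describe (using $H^{2}.\ch_{1}(F)>0$); and the Harder--Narasimhan property is supplied by noetherianity and discreteness arguments as in \cite{bmt14a}. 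Indeed, this reduction is essentially the equivalence the paper itself quotes (via \cite[Corollary~5.2.4]{bmt14a}) just before Proposition \ref{goal}: everything comes down to $\ch_{3}(F)\leq s_{0}\,\alpha^{2}H^{2}.\ch_{1}(F)$ for $\nu_{\alpha,0}$-(semi)stable $F$ with $\nu_{\alpha,0}(F)=0$.

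The genuine gap is that this last inequality --- which is the entire content of the proposition, as you yourself acknowledge by calling it ``the main obstacle'' --- is never actually derived from hypotheses (1)--(3); your text only announces that the $\mathcal{C}$-decomposition of $F[1]$ and the extremal stability of the $\mathcal{O}_{x}$ ``would'' yield it. This step is not routine. Condition (2) gives a torsion pair with $F[1]$ sitting in an exact sequence $0\to T\to F[1]\to F'\to 0$ in $\mathcal{A}_{\alpha,0}$, where $T\in\mathcal{C}\cap\mathcal{A}_{\alpha,0}$ and $F'[1]\in\mathcal{C}$; since $\Im Z_{\alpha,0,s_{0}}$ vanishes on both pieces, the sector hypothesis (1) forces $Z_{\alpha,0,s_{0}}(T)\in\mathbb{R}_{\leq 0}$ but $\Re Z_{\alpha,0,s_{0}}(F')\geq 0$ --- the two contributions have \emph{opposite} a priori signs, so (1) and (2) alone prove nothing, and the whole force of (3) (stability of the skyscrapers at the extremal phase, used through suitable Hom-vanishings) must be spent to control the second piece. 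This is precisely what the proof in \cite{bmt14a} does and what your proposal omits. Two further inaccuracies: hypothesis (1) gives only the \emph{closed} sector $[\phi_{0},\phi_{0}+1]$, so $e^{-i\pi\phi_{0}}Z_{\alpha,0,s_{0}}$ need not be a stability function on $\mathcal{C}$ (objects may land on the boundary ray of phase $\phi_{0}$, mapping to $\mathbb{R}_{>0}$ after rotation, or a priori even at $0$); and noetherianity of $\mathcal{C}$ is not among the hypotheses --- it holds in the paper's application, where $\mathcal{C}$ is generated by an Ext-exceptional collection and has finite length, but your appeal to a full stability condition $(Z_{\alpha,0,s_{0}},\mathcal{C})$ is an unjustified strengthening of what (1)--(3) provide. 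As it stands, your proposal is a correct road map that stops exactly at the step the proposition exists to supply.
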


Our heart $\mathcal{C}$ is constructed 
by using an {\it Ext-exceptional collection} 
in the sense of \cite[Definition 3.10]{mac07}. 

\begin{defin}
An exceptional collection 
$E_{1}, \cdots, E_{n}$ 
on a triangulated category $\mathcal{D}$ 
is {\it Ext-exceptional} 
if for all $i \neq j$, we have 
$\Ext^{\leq 0}(E_{i}, E_{j})=0$. 
\end{defin}

\begin{lem}[{\cite[Lemma 3.14]{mac07}}]
Let $E_{1}, \cdots, E_{n}$ 
be a full Ext-exceptional collection 
on a triangulated category $\mathcal{D}$. 
Then the extension closure 
$\langle
E_{1}, \cdots, E_{n}
\rangle_{ex}$ 
is the heart of a bounded t-structure 
on $\mathcal{D}$. 
\end{lem}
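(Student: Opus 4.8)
The plan is to verify the two conditions of the standard criterion characterising the heart of a bounded t-structure: a full additive subcategory $\mathcal{A} \subset \mathcal{D}$ is such a heart if and only if (a) $\Hom(A, B[k]) = 0$ for all $A, B \in \mathcal{A}$ and all $k < 0$, and (b) every nonzero $X \in \mathcal{D}$ fits into a finite tower of triangles $X_{j-1} \to X_{j} \to A_{j}[k_{j}] \to X_{j-1}[1]$ with $X_{0} = 0$, $X_{m} = X$, each $A_{j} \in \mathcal{A}$ nonzero, and $k_{1} > k_{2} > \cdots > k_{m}$. Throughout I write $\mathcal{A} := \langle E_{1}, \ldots, E_{n} \rangle_{ex}$.

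First I would check (a) on the generators. Since $E_{1}, \ldots, E_{n}$ is an exceptional collection, $\Ext^{\bullet}(E_{i}, E_{j}) = 0$ for $i > j$; the Ext-exceptional hypothesis gives $\Ext^{\leq 0}(E_{i}, E_{j}) = 0$ for $i < j$; and exceptionality of each $E_{i}$ gives $\Ext^{<0}(E_{i}, E_{i}) = 0$. Hence $\Hom(E_{i}, E_{j}[k]) = \Ext^{k}(E_{i}, E_{j}) = 0$ for every $k < 0$ and all $i, j$. I would then extend this vanishing to all of $\mathcal{A}$ by \emph{dévissage}: fixing one variable and inducting on the number of extensions used to build the other, the long exact sequence attached to each defining triangle squeezes the relevant $\Hom$-group between two vanishing groups. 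This establishes (a).

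The substantial part is (b). I would first record the degree-zero input $\Hom(E_{i}, E_{j}) = 0$ for $i \neq j$ and $\Hom(E_{i}, E_{i}) \cong \mathbb{C}$, which together with (a) shows that $\mathcal{A}$ is a finite-length abelian category whose simple objects are exactly $E_{1}, \ldots, E_{n}$: concretely, the kernel and cokernel of a morphism in $\mathcal{A}$, computed in $\mathcal{D}$, again lie in $\mathcal{A}$. To produce the towers, I would let $\mathcal{P} \subset \mathcal{D}$ be the full subcategory of objects admitting a filtration as in (b); it evidently contains every shift $E_{i}[k]$ and is stable under $[1]$. Because the collection is full, the triangulated subcategory generated by the $E_{i}$ is all of $\mathcal{D}$, so it suffices to prove that $\mathcal{P}$ is closed under cones, whence $\mathcal{P} = \mathcal{D}$ and (b) follows.

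The main obstacle is exactly this cone-stability, which amounts to reassembling the factors of a cone into a single strictly decreasing tower. I would organise it as a finite \emph{shuffling} procedure on adjacent pairs of factors $A[k], A'[k']$: when the gap $|k - k'| \geq 2$ points the wrong way, condition (a) makes the connecting class vanish, so the pair splits and may be transposed; when $k = k'$ the connecting class lies in $\Ext^{1}(A', A)$ and the two factors fuse into a single extension inside $\mathcal{A}$ placed at shift $k$; and when the shifts are adjacent and out of order, the degree-zero connecting morphism is a map $\phi$ in the abelian category $\mathcal{A}$, whose kernel and cokernel supply the two correctly-ordered factors $(\ker \phi)[k+1]$ and $(\operatorname{coker}\phi)[k]$. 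Each move strictly reduces a suitable measure of disorder, so the process terminates in the desired tower, establishing (b) and hence the lemma. Equivalently, one may run an induction on $n$, gluing the bounded t-structure on $\langle E_{1}, \ldots, E_{n-1}\rangle$ to the standard one on $\langle E_{n} \rangle$ along the semiorthogonal decomposition determined by the exceptional collection, where Ext-exceptionality is precisely the compatibility needed for the glued heart to be $\mathcal{A}$.
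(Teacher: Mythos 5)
This lemma is imported into the paper from \cite[Lemma 3.14]{mac07} with no internal proof, so there is nothing in the paper itself to compare against; the argument in the cited source is essentially the one you relegate to your final sentence, namely an induction on $n$ that glues (in the sense of Beilinson--Bernstein--Deligne) the standard heart on the triangulated subcategory generated by $E_{n}$ to the inductively constructed heart on $\langle E_{1}, \dots, E_{n-1} \rangle$ across the semiorthogonal decomposition determined by the exceptional collection, Ext-exceptionality being exactly what makes the glued heart equal to the extension closure. Your primary route --- verifying the two axioms for a heart directly and producing the filtrations by a shuffling induction over the triangulated hull of the $E_{i}$ --- is a legitimate, more self-contained alternative; part (a), the splitting when the shifts differ by at least $2$, and the fusion of equal shifts into a single extension inside $\mathcal{A}$ are all handled correctly.

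The one place where you assert rather than prove something substantive is the adjacent-shift case of the shuffle. There you need that for a morphism $\phi \colon B \to A$ in $\mathcal{A}$ the cone sits in a triangle with an object of $\mathcal{A}[1]$ (the shifted ``kernel'') as sub and an object of $\mathcal{A}$ (the ``cokernel'') as quotient --- equivalently, that $\mathcal{A}$ is an abelian subcategory whose kernels and cokernels are realized inside $\mathcal{A}$. This is the real content of the lemma and does not follow formally from the negative-Ext vanishing of (a) together with $\Hom(E_{i}, E_{j}) = \delta_{ij}\,\mathbb{C}$; it requires its own d\'evissage on the lengths of $A$ and $B$ as iterated extensions of the generators, using the octahedral axiom to reduce to morphisms between the $E_{i}$ themselves, where the claim is clear because every such morphism is zero or an isomorphism. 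If you supply that induction (or simply run the gluing argument, in which abelianness of the glued heart comes for free from the general theory), the proof is complete.
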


\begin{lem}
A collection 
\begin{equation}
\label{ext excep}
\mathcal{O}(-1, -1)[3], 
\mathcal{O}(0, -1)[2], 
\mathcal{O}(1, -1)[1], 
\mathcal{O}(-1, 0)[2], 
\mathcal{O}[1], 
\mathcal{O}(1, 0)
\end{equation}
is a full Ext-exceptional collection on $D^b(X)$. 
\end{lem}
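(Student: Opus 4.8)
The plan is to reduce every $\Hom$ and $\Ext$ among the six objects to the cohomology $H^{\bullet}(X,\mathcal{O}(c,d))$ for a short list of bidegrees, and then to read off exceptionality, fullness, and the degree conditions encoded by the shifts. Writing $E=\mathcal{O}(a_i,b_i)[n_i]$ and $E'=\mathcal{O}(a_j,b_j)[n_j]$ (so $(n_1,\dots,n_6)=(3,2,1,2,1,0)$), one has
\[
\Ext^k(E,E')\cong H^{k+n_j-n_i}\!\left(X,\mathcal{O}(a_j-a_i,\,b_j-b_i)\right),
\]
so everything is controlled by $\mathcal{O}(c,d)$ with $c\in\{-2,\dots,2\}$, $d\in\{-1,0,1\}$. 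I would compute these from the Koszul resolution $0\to\mathcal{O}_{\mathbb{P}^2\times\mathbb{P}^2}(-1,-1)\to\mathcal{O}_{\mathbb{P}^2\times\mathbb{P}^2}\to\mathcal{O}_X\to0$ of the $(1,1)$-divisor, twisting by $\mathcal{O}(c,d)$ and applying the K\"unneth formula together with the cohomology of $\mathcal{O}_{\mathbb{P}^2}(n)$.

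The first task is that the underlying line bundles form a full exceptional collection in the given order. Each $\mathcal{O}(a,b)$ is exceptional since $X$ is Fano, so $H^{>0}(X,\mathcal{O}_X)=0$. For semiorthogonality note that every backward pair produces a twist $\mathcal{O}(c,d)$ with either $d=-1$, or $d=0$ and $c<0$: in the first case $\mathcal{O}(c,-1)$ restricts to $\mathcal{O}_{\mathbb{P}^1}(-1)$ on the fibres of $\pi$ and is acyclic, while in the second $H^{\bullet}(X,\mathcal{O}(c,0))\cong H^{\bullet}(\mathbb{P}^2,\mathcal{O}(c))=0$ for $c\in\{-1,-2\}$; either way $\Ext^{\bullet}$ vanishes. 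For fullness I would regroup the collection as
\[
\mathcal{O}_{\pi}(-1)\otimes\pi^{*}\{\mathcal{O},\mathcal{O}(1),\mathcal{O}(2)\},\qquad \pi^{*}\{\mathcal{O}(-1),\mathcal{O},\mathcal{O}(1)\},
\]
which is exactly Orlov's projective-bundle decomposition $D^{b}(X)=\langle\pi^{*}D^{b}(\mathbb{P}^2)\otimes\mathcal{O}_{\pi}(-1),\ \pi^{*}D^{b}(\mathbb{P}^2)\rangle$ with a Beilinson collection inserted in each factor; hence the six objects generate $D^{b}(X)$.

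It remains to verify the Ext-exceptional condition: for each forward pair the lowest cohomological degree in which $\Ext^{\bullet}(\mathcal{O}(a_i,b_i),\mathcal{O}(a_j,b_j))$ is nonzero must exceed the relative shift $n_j-n_i$. For pairs within a single Beilinson block, and for the cross-block pairs whose twist $\mathcal{O}(c,1)$ has $c\ge0$, the relevant cohomology lives in degree $0$, and the needed inequalities hold by inspection of the shift pattern $(3,2,1\mid2,1,0)$. The main obstacle is the cross-block pairs whose twist has $d=1$ and $c<0$, since these are the only bidegrees producing cohomology above degree $0$. The critical instance is $\mathcal{O}(1,-1)[1]$ versus $\mathcal{O}(-1,0)[2]$, governed by $H^{\bullet}(X,\mathcal{O}(-2,1))$: the Koszul sequence gives $H^{1}(X,\mathcal{O}(-2,1))\cong H^{2}(\mathbb{P}^2\times\mathbb{P}^2,\mathcal{O}(-3,0))\cong\mathbb{C}$ (equivalently $H^{1}(\mathbb{P}^2,\mathcal{T}_{\mathbb{P}^2}(-3))$), so this $\Ext^{\bullet}$ is concentrated in degree $1$.

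I expect this single borderline pair to be the crux: it is exactly the place where the higher cohomology of $X$ intervenes, and verifying that its degree-$1$ class lands in strictly positive degree after shifting is what pins down the admissible relative shift between the two blocks. Once that inequality is checked to be strict for every forward pair, the collection is Ext-exceptional, and its extension closure is the heart of a bounded t-structure by the cited lemma, which is what the statement asserts. The remaining verifications (the degree-$0$ pairs and the vanishing behind semiorthogonality) are routine cohomology bookkeeping via the Koszul-plus-K\"unneth computation above.
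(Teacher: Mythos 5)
Your overall strategy mirrors the paper's: fullness via Orlov's $\mathbb{P}^1$-bundle decomposition, and Ext-exceptionality by reducing every $\Ext$-group to $H^{\bullet}(X,\mathcal{O}(c,d))$ for finitely many twists (the paper computes these via $R\pi_*$ and the formula $\dR\Gamma(X,\pi^*\mathcal{O}_{\mathbb{P}^2}(k)\otimes\mathcal{O}_{\pi}(l))$, you via the Koszul resolution on $\mathbb{P}^2\times\mathbb{P}^2$; these agree). The semiorthogonality and fullness parts of your argument are fine.

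The genuine gap is precisely at the step you single out as the crux and then defer. You correctly compute that $\Ext^{\bullet}(\mathcal{O}(1,-1),\mathcal{O}(-1,0))\cong H^{\bullet}(X,\mathcal{O}(-2,1))$ is one-dimensional and concentrated in degree $1$, but you never carry out the final shift bookkeeping, and when one does, the promised ``strict inequality'' fails. Indeed, with $E=\mathcal{O}(1,-1)[1]$ and $F=\mathcal{O}(-1,0)[2]$ one has
\[
\Ext^{k}\bigl(E,F\bigr)=\Ext^{\,k+2-1}\bigl(\mathcal{O}(1,-1),\mathcal{O}(-1,0)\bigr)=H^{k+1}\bigl(X,\mathcal{O}(-2,1)\bigr),
\]
which is nonzero exactly at $k=0$: the degree-$1$ class lands in degree $0$ after shifting, not in strictly positive degree. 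So $\Hom(E,F)\cong\mathbb{C}\neq 0$, and the condition $\Ext^{\leq 0}(E_i,E_j)=0$ is violated for this pair. Your own cohomology computation therefore contradicts the conclusion you assert; the proposal cannot be completed as written. (Note that no choice of shift on $\mathcal{O}(-1,0)$ repairs this within the given collection: the pairs against $\mathcal{O}(1,-1)[1]$ on one side and against $\mathcal{O}[1]$, $\mathcal{O}(1,0)$ on the other impose incompatible constraints. The same issue is invisible in the paper's one-line verification, which records $\dR\Gamma(X,\pi^*\mathcal{O}(k)\otimes\mathcal{O}_{\pi}(1))=\dR\Gamma(\mathbb{P}^2,\mathcal{T}_{\mathbb{P}^2}(k))$ but never evaluates it at $k=-3$, where $\dR\Gamma(\mathbb{P}^2,\Omega^1_{\mathbb{P}^2})=\mathbb{C}[-1]$. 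So either the collection or its shifts must be modified, or the obstruction you found must be explained away; simply asserting that the remaining check succeeds is not an option.)
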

\begin{proof}
Using the equality 
$\mathcal{O}_{\pi}(1)=\mathcal{O}(1, 1)$, 
the collection (\ref{ext excep}) 
can be also written as 
\begin{align*}
&\pi^*\mathcal{O}_{\mathbb{P}^2} \otimes \mathcal{O}_{\pi}(-1)[3], 
\pi^*\mathcal{O}_{\mathbb{P}^2}(1) \otimes \mathcal{O}_{\pi}(-1)[2], 
\pi^*\mathcal{O}_{\mathbb{P}^2}(2) \otimes \mathcal{O}_{\pi}(-1)[1], \\
&\pi^*\mathcal{O}_{\mathbb{P}^2}(-1)[2], 
\pi^*\mathcal{O}_{\mathbb{P}^2}[1], 
\pi^*\mathcal{O}_{\mathbb{P}^2}(1).  
\end{align*}

Since we have 
$D^b(X)
=\left\langle
\dL \pi^*D^b(\mathbb{P}^2) \otimes \mathcal{O}_{\pi}(-1), 
\dL \pi^*D^b(\mathbb{P}^2)
\right\rangle, 
$
we can see that the collection 
(\ref{ext excep}) 
is a full exceptional collection. 
To prove it is Ext-exceptional, 
we can use the formula 
\[
\dR\Gamma\left(X, 
\pi^*\mathcal{O}_{\mathbb{P}^2}(k) \otimes \mathcal{O}_{\pi}(l) 
\right)
=\begin{cases}
0 & (l=-1) \\
\dR\Gamma(\mathbb{P}^2, \mathcal{O}(k)) & (l=0) \\
\dR\Gamma(\mathbb{P}^2, \mathcal{T}_{\mathbb{P}^2}(k)) & (l=1). 
\end{cases}
\]

\end{proof}

Now we can define the following heart: 

\begin{defin}
We define a heart $\mathcal{C} \subset D^b(X)$ as 
\[
\mathcal{C}:=
\left\langle
\mathcal{O}(-1, -1)[3], 
\mathcal{O}(0, -1)[2], 
\mathcal{O}(1, -1)[1], 
\mathcal{O}(-1, 0)[2], 
\mathcal{O}[1], 
\mathcal{O}(1, 0)
\right\rangle_{ex}. 
\]
\end{defin}

The following will be useful 
in the rest of the arguments: 

\begin{lem}
\label{ch comp}
For integers $k, l \in \mathbb{Z}$, 
we have the following equations. 
\begin{enumerate}
\item $H^2.\ch_{1}(\mathcal{O}(k, l))=
la^2+2(k+l)ab+kb^2$. 
\item $H.\ch_{2}(\mathcal{O}(k, l))=
\frac{1}{2}\left(
(2k+l)la+(k+2l)kb \right)$. 
\item $\ch_{3}(\mathcal{O}(k, l))=
\frac{1}{2}kl(k+l)$. 
\end{enumerate}
\end{lem}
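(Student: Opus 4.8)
The plan is to reduce everything to the intersection theory of $X$, which is governed by just four triple intersection numbers. First I would record that $X=\mathbb{P}(\mathcal{T}_{\mathbb{P}^2})$ is the $(1,1)$-divisor in $\mathbb{P}^2 \times \mathbb{P}^2$ described after diagram (\ref{symm}), so that $[X]=H_1+H_2$ in the ambient Chow ring, where $H_1,H_2$ are the hyperplane classes of the two factors and $h_1=H_1|_X$, $h_2=H_2|_X$. Using $H_i^3=0$ and $H_1^2H_2^2=1$ on $\mathbb{P}^2\times\mathbb{P}^2$ and intersecting with $[X]=H_1+H_2$, I obtain
\[
h_1^3 = h_2^3 = 0, \qquad h_1^2 h_2 = h_1 h_2^2 = 1
\]
on $X$. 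Equivalently, these follow from the structure of $X$ as a $\mathbb{P}^1$-bundle over $\mathbb{P}^2$: the Grothendieck relation $\xi^2=\pi^*c_1(\mathcal{T}_{\mathbb{P}^2})\,\xi-\pi^*c_2(\mathcal{T}_{\mathbb{P}^2})$ with $\xi=\ch_1(\mathcal{O}_\pi(1))=h_1+h_2$, $c_1(\mathcal{T}_{\mathbb{P}^2})=3h_1$, $c_2(\mathcal{T}_{\mathbb{P}^2})=3h_1^2$, together with $\pi_*\xi=1$, gives $h_2^2=h_1h_2-h_1^2$ and hence the same four numbers. The symmetry $h_1\leftrightarrow h_2$ visible in (\ref{symm}) is already manifest here.

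Next, since $\mathcal{O}(k,l)$ is a line bundle with $c_1=kh_1+lh_2$, its Chern character is $\ch(\mathcal{O}(k,l))=e^{kh_1+lh_2}$, so that $\ch_1=kh_1+lh_2$, $\ch_2=\tfrac{1}{2}(kh_1+lh_2)^2$, and $\ch_3=\tfrac{1}{6}(kh_1+lh_2)^3$. Substituting $H=ah_1+bh_2$ and expanding each of $H^2.\ch_1$, $H.\ch_2$, and $\ch_3$, every monomial in $h_1,h_2$ of total degree $3$ is evaluated by the four numbers above: the terms $h_1^3$ and $h_2^3$ drop out, while $h_1^2h_2$ and $h_1h_2^2$ each contribute $1$. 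Collecting terms then produces the three displayed identities.

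The computation is entirely routine once the intersection numbers are fixed, so the only real obstacle is careful bookkeeping; the places to be attentive are the normalizations $\tfrac{1}{2}$ and $\tfrac{1}{6}$ in $\ch_2$ and $\ch_3$ and the correct assignment of the surviving triple products $h_1^2h_2=h_1h_2^2=1$. As a consistency check I would verify that each resulting expression is invariant under the simultaneous swap $(h_1,a,k)\leftrightarrow(h_2,b,l)$, which is forced by the symmetry of the two projections $\pi$ and $\sigma$ in (\ref{symm}); indeed $\ch_3=\tfrac{1}{2}kl(k+l)$ is manifestly symmetric in $k$ and $l$, and the expressions for $H^2.\ch_1$ and $H.\ch_2$ are symmetric under the full swap, which I would use as the final sanity test.
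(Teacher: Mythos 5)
Your approach coincides with the paper's: the printed proof is the single sentence that $h_1^3=h_2^3=0$, $h_1^2.h_2=h_1.h_2^2=1$ and a straightforward expansion give the result, and your derivation of those four intersection numbers (from the $(1,1)$-divisor description, or equivalently from the Grothendieck relation $h_2^2=h_1h_2-h_1^2$) correctly supplies what the paper leaves implicit. There is, however, one concrete point where your closing claim that ``collecting terms then produces the three displayed identities'' does not hold. Carrying out the bookkeeping you describe, with $\ch_2(\mathcal{O}(k,l))=\tfrac{1}{2}(kh_1+lh_2)^2$, gives
\[
H.\ch_{2}(\mathcal{O}(k,l))
=\tfrac{1}{2}\bigl((2k+l)la+(k+2l)kb\bigr),
\]
which is \emph{half} the value stated in item (2) (e.g. $H.\ch_2(\mathcal{O}(1,0))=\tfrac{1}{2}(ah_1+bh_2).h_1^2=\tfrac{b}{2}$, not $b$), while items (1) and (3) come out exactly as stated since $\ch_1=c_1$ and $\tfrac{1}{6}(kh_1+lh_2)^3=\tfrac{1}{2}kl(k+l)$. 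So either you did not actually perform the check for (2), or you should record that the statement of (2) carries a spurious factor of $2$; the paper uses the doubled value consistently in Lemmas \ref{hearts} and \ref{charge}, so the discrepancy only shifts the admissible constant $\alpha_0$ in Proposition \ref{goal} rather than breaking the argument. Note also that your proposed sanity test --- invariance under the swap $(h_1,a,k)\leftrightarrow(h_2,b,l)$ --- cannot detect this, since a uniform factor of $2$ is symmetric; a better check is to evaluate a single concrete case such as $\mathcal{O}(1,0)$ directly against $\tfrac{1}{2}H.h_1^2$.
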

\begin{proof}
By using the equations 
$h_{1}^3=h_{2}^3=0$ and 
$h_{1}^2.h_{2}=h_{1}.h_{2}^2=1$, 
the straightforward computation yields the result. 
\end{proof}

\begin{lem}
\label{hearts}
For $0 < \alpha <\alpha_{0}$, 
we have 
$\mathcal{C} \subset 
\left\langle
\mathcal{A}_{\alpha, 0}, 
\mathcal{A}_{\alpha, 0}[1]
\right\rangle_{ex}$. 
\end{lem}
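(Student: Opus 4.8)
The plan is to verify condition (2) of Proposition \ref{aux} one generator at a time. Since $\left\langle \mathcal{A}_{\alpha,0},\mathcal{A}_{\alpha,0}[1]\right\rangle_{ex}$ is an extension closure, and hence closed under extensions, it suffices to show that each of the six objects of \eqref{ext excep} belongs to it. Membership is governed by cohomological amplitude: $\left\langle \mathcal{A}_{\alpha,0},\mathcal{A}_{\alpha,0}[1]\right\rangle_{ex}$ is exactly the subcategory of those $F \in D^b(X)$ whose cohomology with respect to the bounded t-structure with heart $\mathcal{A}_{\alpha,0}$ is concentrated in degrees $-1$ and $0$. Writing a generator as $\mathcal{O}(k,l)[n]$, I would first locate the unique integer $j=j(k,l)$ with $\mathcal{O}(k,l)\in\mathcal{A}_{\alpha,0}[j]$ and then check that $j(k,l)+n\in\{0,1\}$. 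The existence of a single such $j$ follows from Lemma \ref{tilt stab of lb}: since $X=\mathbb{P}(\mathcal{T}_{\mathbb{P}^2})$ has nef tangent bundle, every effective divisor is nef by Proposition \ref{eff nef}, so $\omega.D^2\geq 0$ for all effective $D$, and therefore each line bundle is, up to a shift, $\nu_{\alpha H,0}$-stable.

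To compute $j(k,l)$ I would unwind the two tilts. In the first tilt $\mathcal{O}(k,l)$ lands in $\mathcal{T}_{\alpha H,0}$ or in $\mathcal{F}_{\alpha H,0}$ according to the sign of $\mu_{\alpha H,0}$, that is of $H^2.\ch_1(\mathcal{O}(k,l))=la^2+2(k+l)ab+kb^2$ from Lemma \ref{ch comp}(1). In the second tilt, the representative actually lying in $\Coh^{\alpha H,0}(X)$ lands in $\mathcal{T}'$ or $\mathcal{F}'$ according to the sign of its tilt-slope $\nu_{\alpha H,0}$; using Lemma \ref{ch comp}(2) together with $H^3=3ab(a+b)$, this sign is controlled by $Q:=\alpha H.\ch_2(\mathcal{O}(k,l))-\tfrac{1}{2}\alpha^3 ab(a+b)$, whose sign for $\alpha>0$ is that of $H.\ch_2(\mathcal{O}(k,l))-\tfrac{1}{2}\alpha^2 ab(a+b)$. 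Combining the two tilts yields $j(k,l)\in\{0,-1,-2\}$; in the borderline case $H^2.\ch_1=0$ (as for $\mathcal{O}=\mathcal{O}(0,0)$) the shifted representative has $v^0_1=0$, so $\nu=+\infty$ and it lands in $\mathcal{T}'$.

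Running this for the six bundles, $\mathcal{O}(1,-1)$ gives $j=-1$ and $\mathcal{O}(0,-1)$ gives $j\in\{-1,-2\}$ (either value already suffices), with no constraint on $\alpha$. The bound enters only for the remaining three: one needs $Q>0$ to obtain $j=0$ for $\mathcal{O}(1,0)$, and $j=-2$ for both $\mathcal{O}(-1,0)$ and $\mathcal{O}(-1,-1)$. The first two conditions reduce to $\alpha^2<2/(a(a+b))$ and the last to $\alpha^2<6/(ab)$, and both are implied by $0<\alpha<\alpha_0$ since $\alpha_0^2\leq 2/(a(a+b))\leq 6/(ab)$ (the last inequality being $0\leq 6a^2+4ab$). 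With the resulting values $j=-2,\{-1,-2\},-1,-2,-1,0$ for $\mathcal{O}(-1,-1),\mathcal{O}(0,-1),\mathcal{O}(1,-1),\mathcal{O}(-1,0),\mathcal{O},\mathcal{O}(1,0)$ and the shifts $n=3,2,1,2,1,0$, the sums $j+n$ equal $1,\{0,1\},0,0,0,0$, all lying in $\{0,1\}$, which finishes the verification.

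The hard part will be the careful bookkeeping of the cohomological degree through the double tilt — in particular getting the sign of $\nu_{\alpha H,0}$ right for those line bundles that enter $\Coh^{\alpha H,0}(X)$ only after a shift by $[1]$ (where the sign of $\nu$ is opposite to that of $Q$), and treating the borderline objects with $v^0_1=0$. It is worth noting that only the first term $\sqrt{2/(a(a+b))}$ in the definition of $\alpha_0$ is used here; the second term $\sqrt{18/(a^2+6ab+b^2)}$ will instead be needed for conditions (1) and (3) of Proposition \ref{aux}.
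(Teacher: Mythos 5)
Your proof is correct and takes essentially the same route as the paper: for each of the six generators one computes $H^2.\ch_1$ and $H.\ch_2$ via Lemma \ref{ch comp}, invokes tilt-stability of line bundles (Lemma \ref{tilt stab of lb}), and tracks the object through the two tilts; the paper writes out only the case $\mathcal{O}(1,0)$ and leaves the rest as ``similar computations''. Your treatment of $\mathcal{O}(0,-1)$ is in fact slightly more careful than the paper's, which asserts $\mathcal{O}(0,-1)[2]\in\mathcal{A}_{\alpha,0}$ outright even though the sign of the relevant tilt-slope is not controlled by $\alpha<\alpha_{0}$ --- but, as you observe, either placement lands in $\left\langle\mathcal{A}_{\alpha,0},\mathcal{A}_{\alpha,0}[1]\right\rangle_{ex}$, so the lemma holds either way.
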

\begin{proof}
By Lemma \ref{ch comp}, 
we have $H.\ch_{1}(\mathcal{O}(1, 0))>0$ 
and hence $\mathcal{O}(1, 0) \in \Coh^{\alpha H, 0}(X)$. 
By assumption on $\alpha$, 
we also have 
\[
H.\ch_{2}(\mathcal{O}(1, 0))
-\frac{1}{6}\alpha^2H^3\ch_{0}(\mathcal{O}(1, 0))
=\frac{1}{2}b-\frac{1}{2}\alpha^2ab(a+b)>0, 
\]
i.e., 
$\nu_{\alpha, 0}(\mathcal{O}(1, 0))>0$. 
Since $\mathcal{O}(1, 0)$ is tilt stable 
by Lemma \ref{tilt stab of lb}, we conclude that 
$\mathcal{O}(1, 0) \in \mathcal{A}_{\alpha, 0}$. 

Similar computations yield that 
\[
\mathcal{O}[1], 
\mathcal{O}(-1, 0)[1], 
\mathcal{O}(1, -1), 
\mathcal{O}(0, -1)[1], 
\mathcal{O}(-1, -1)[1] 
\in \Coh^{\alpha H, 0}(X)
\]
and 
\[
\mathcal{O}[1], 
\mathcal{O}(-1, 0)[2], 
\mathcal{O}(1, -1)[1], 
\mathcal{O}(0, -1)[2], 
\mathcal{O}(-1, -1)[2] 
\in \mathcal{A}_{\alpha, 0}. 
\]
\end{proof}

\begin{lem}
\label{charge}
Let $0 < \alpha < \alpha_{0}$, 
and let $\phi_{0} \in (0, 1)$ be a real number 
such that 
$Z_{\alpha, 0, \frac{1}{18}}(\mathcal{O}(1, 0))=r_{0}\exp(\pi\phi_{0}i)$ 
for some positive real number $r_{0}>0$. 
Then we have 
\[
Z_{\alpha, 0, \frac{1}{18}}(\mathcal{C}) \subset 
\left\{
r\exp(\pi\phi i) : 
r \geq 0, \phi_{0} \leq \phi \leq \phi_{0}+1
\right\}. 
\]
\end{lem}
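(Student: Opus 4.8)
The plan is to exploit that $Z:=Z_{\alpha,0,\frac{1}{18}}$ is additive on short exact sequences together with the fact that $\mathcal{C}$ is the extension closure of the six objects in the collection (\ref{ext excep}). Writing $Z_{0}:=Z(\mathcal{O}(1,0))=r_{0}\exp(\pi\phi_{0}i)$, I would first record that the target wedge is a closed half-plane through the origin,
\[
\left\{r\exp(\pi\phi i):r\geq 0,\ \phi_{0}\leq\phi\leq\phi_{0}+1\right\}
=\left\{w\in\mathbb{C}:\Im\!\left(\overline{Z_{0}}\,w\right)\geq 0\right\}=:W,
\]
and hence a convex cone. Any object $E\in\mathcal{C}$ admits a filtration whose subquotients are among the six generators $G_{j}$, so $[E]=\sum_{j}n_{j}[G_{j}]$ with $n_{j}\in\mathbb{Z}_{\geq 0}$, and therefore $Z(E)\in\sum_{j}\mathbb{R}_{\geq 0}Z(G_{j})$. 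Since $W$ is a convex cone, it suffices to verify $Z(G_{j})\in W$ for each of the six generators. This is the structural reduction that makes the whole statement a finite check.

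For the five generators $\mathcal{O}(1,0),\ \mathcal{O}[1],\ \mathcal{O}(-1,0)[2],\ \mathcal{O}(1,-1)[1],\ \mathcal{O}(0,-1)[2]$ lying in $\mathcal{A}_{\alpha,0}$ (Lemma \ref{hearts}), membership forces $\Im Z\geq 0$, so each has phase in $(0,1]$. By definition $\mathcal{O}(1,0)$ has phase exactly $\phi_{0}$, hence lies on the boundary of $W$. For the other four I would compute $\Re Z=\frac{1}{18}\alpha^{2}H^{2}.\ch_{1}-\ch_{3}$ from Lemma \ref{ch comp} and check $\Re Z\leq 0$; here the hypothesis $b>a$ is exactly what is needed, since it is what makes $\Re Z(\mathcal{O}(1,-1)[1])=-\frac{1}{18}\alpha^{2}(b^{2}-a^{2})$ negative. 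Thus these four have phase in $[\frac12,1]$, and because $0<\phi_{0}<\frac12$ we get $\phi_{0}\leq\phi\leq 1\leq\phi_{0}+1$, so all five lie in $W$. No genuine inequality is involved beyond sign-checking.

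It remains to treat $\mathcal{O}(-1,-1)[3]=\mathcal{O}(-1,-1)[2][1]$, where $\mathcal{O}(-1,-1)[2]\in\mathcal{A}_{\alpha,0}$ by Lemma \ref{hearts}. Since $Z(\mathcal{O}(-1,-1)[3])=-Z(\mathcal{O}(-1,-1)[2])$, containment in $W$ is equivalent to $\Im(\overline{Z_{0}}\,Z(\mathcal{O}(-1,-1)[2]))\leq 0$, i.e.\ to the assertion that $\mathcal{O}(-1,-1)[2]$ has phase at most $\phi_{0}$. Using $\ch_{3}(\mathcal{O}(-1,-1))=-1$, Lemma \ref{ch comp} gives $\Re Z(\mathcal{O}(-1,-1)[2])=1-\frac{1}{18}\alpha^{2}(a^{2}+4ab+b^{2})$, which is positive because $\alpha<\alpha_{0}\leq\sqrt{18/(a^{2}+6ab+b^{2})}$ and $a^{2}+4ab+b^{2}<a^{2}+6ab+b^{2}$; combined with $\Im Z\geq 0$ this places $\mathcal{O}(-1,-1)[2]$ in phase $[0,\frac12)$. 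The final comparison, that its phase is at most $\phi_{0}=$ phase of $\mathcal{O}(1,0)$, unwinds through the cross-product criterion $\Re Z(\mathcal{O}(-1,-1)[2])\,\Im Z_{0}-\Im Z(\mathcal{O}(-1,-1)[2])\,\Re Z_{0}\geq 0$ into a single polynomial inequality in $a,b,\alpha$. This is the main obstacle, and the only place where the second term $\sqrt{18/(a^{2}+6ab+b^{2})}$ in the definition of $\alpha_{0}$ and the hypothesis $b>a$ are used essentially. I would finish by clearing denominators, treating the result as a quadratic in $\alpha^{2}$, and checking non-negativity on the region $\alpha^{2}(a^{2}+6ab+b^{2})\leq 18$, $b>a>0$ by bounding its coefficients.
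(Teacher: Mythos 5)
Your architecture matches the paper's: reduce to the six generators (the paper does this implicitly by placing each $Z_{\alpha}(G_j)$ in a quadrant, you make the convex-cone reduction explicit), dispose of the five generators other than $\mathcal{O}(-1,-1)[3]$ by sign checks on $\Re Z$ and $\Im Z$ (with $b>a$ entering for $\mathcal{O}(1,-1)[1]$, exactly as you observe), and isolate the single substantive point as the comparison of the phase of $\mathcal{O}(-1,-1)[2]$ with $\phi_0$. Up to that point the proposal is fine.

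The gap is that you never carry out that comparison, and it is the entire content of the lemma. ``I would finish by clearing denominators \dots and checking non-negativity \dots by bounding its coefficients'' is a plan, not a proof, and the computation does not come out the way you assume. Writing your cross-product criterion out with Lemma \ref{ch comp} and dividing by $\alpha b$, the requirement becomes
\[
1-\tfrac{1}{9}\left(8a^2+11ab+2b^2\right)\alpha^2+\tfrac{1}{36}a(a+b)\left(a^2+6ab+2b^2\right)\alpha^4\ \geq\ 0,
\]
and this quadratic in $\alpha^2$ is \emph{negative} on part of the region you describe. For $(a,b)=(1,10)$ one has $\alpha_0^2=18/161\approx 0.112$, while the quadratic $1-35.3\,\alpha^2+79.8\,\alpha^4$ already changes sign at $\alpha^2\approx 0.030$; concretely, at $\alpha^2=0.09$ Lemma \ref{ch comp} gives $Z_{\alpha}(\mathcal{O}(1,0))=0.6+1.515i$ and $Z_{\alpha}(\mathcal{O}(-1,-1)[2])=0.295+8.415i$, whose phase exceeds $\phi_0$, so $Z_{\alpha}(\mathcal{O}(-1,-1)[3])$ falls outside the wedge. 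Hence the deferred step cannot be completed ``by bounding coefficients''; it genuinely requires a smaller bound on $\alpha$ (or an additional restriction on $b/a$), not just bookkeeping. I would add that this is precisely the point where the paper's own proof is weakest: there, the first fraction's denominator $\alpha\bigl(b-\tfrac{1}{2}\alpha^2ab(a+b)\bigr)$ is replaced by the larger denominator of the second fraction before combining, but since the numerator it carries is negative this moves the expression in the wrong direction, so the displayed estimate does not establish the claimed positivity either. Had you actually executed your final computation you would have discovered this; as written, the proposal leaves the only nontrivial inequality unverified and, as stated, unverifiable.
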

\begin{proof}
Recall that our central charge 
is written as 
\[
Z_{\alpha}:=Z_{\alpha, 0, \frac{1}{18}}
=-\ch_{3}+\frac{1}{18}\alpha^2H^2.\ch_{1}
+i\left(
\alpha H.\ch_{2}-\frac{1}{6}\alpha^3H^3\ch_{0}
\right). 
\]
By Lemma \ref{ch comp} and the proof of Lemma \ref{hearts}, 
we can see that 
$Z_{\alpha}(\mathcal{O}(-1, -1)[3])$ is 
in the third quadrant, 
$Z_{\alpha}(\mathcal{O}(1, 0))$ is 
in the first quadrant, and 
$Z_{\alpha}(M)$ 
is in the second quadrant 
for other generators $M$ of the heart 
$\mathcal{C}$. 
Now it is enough to check the inequality 
\[
-\frac{\Re Z_{\alpha}\left(\mathcal{O}(1, 0)
\right)}
{\Im Z_{\alpha}\left(\mathcal{O}(1, 0)
\right)}
+\frac{\Re Z_{\alpha}\left(\mathcal{O}(-1, -1)[3]
\right)}
{\Im Z_{\alpha}\left(\mathcal{O}(-1, -1)[3]
\right)}
>0. 
\]
We can estimate 
the left hand side of the above requiered inequality 
as follows: 
\[
\begin{aligned}
&\quad -\frac{\frac{1}{18}\alpha^2(2a+b)b}{\alpha(b-\frac{1}{2}\alpha^2ab(a+b))}
+\frac{1-\frac{1}{18}\alpha^2(a^2+4ab+b^2)}{\alpha(3a+3b-\frac{1}{2}\alpha^2ab(a+b))} \\
&>\frac{-\frac{1}{18}\alpha^2(2a+b)b+1-\frac{1}{18}\alpha^2(a^2+4ab+b^2)}
        {\alpha(3a+3b-\frac{1}{2}\alpha^2ab(a+b))} \\
&=\frac{1-\frac{1}{18}\alpha^2(a^2+6ab+2b^2)}
        {\alpha(3a+3b-\frac{1}{2}\alpha^2ab(a+b))}>0. 
\end{aligned}
\]
Hence the statement holds. 
\end{proof}

\begin{lem}
\label{st of str}
Let $0 < \alpha <\alpha_{0}$ and $x \in X$. 
Then we have $\mathcal{O}_{x} \in \mathcal{C}$. 
Moreover, for any non-zero proper subobject 
$C \subset \mathcal{O}_{x}$ in the category 
$\mathcal{C}$, 
we have 
$\Im Z_{\alpha, 0, \frac{1}{18}}(C)>0$. 
\end{lem}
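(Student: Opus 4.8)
The plan is to prove the two assertions separately: first that $\mathcal{O}_{x}\in\mathcal{C}$, and then that $\Im Z_{\alpha}(C)>0$ for every nonzero proper subobject $C\subset\mathcal{O}_{x}$, where I abbreviate $Z_{\alpha}:=Z_{\alpha,0,\frac{1}{18}}$ as in Lemma \ref{charge}. Write $E_{1},\dots,E_{6}$ for the six generators of $\mathcal{C}$ in the listed order, so that $E_{1}=\mathcal{O}(-1,-1)[3]$ and $E_{6}=\mathcal{O}(1,0)$, and put $E_{i}=G_{i}[n_{i}]$ with $G_{i}$ the underlying line bundle and $(n_{1},\dots,n_{6})=(3,2,1,2,1,0)$. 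Since $E_{1},\dots,E_{6}$ is a full Ext-exceptional collection, $\mathcal{C}$ is a finite-length abelian category whose simple objects are exactly the $E_{i}$; I will freely use the Jordan--H\"older multiplicities $[A:E_{i}]$ of an object $A\in\mathcal{C}$.

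For the membership $\mathcal{O}_{x}\in\mathcal{C}$, I would construct the Beilinson-type resolution of $\mathcal{O}_{x}$ attached to the full exceptional collection $G_{1},\dots,G_{6}$, exhibiting a finite filtration of $\mathcal{O}_{x}$ in $D^{b}(X)$ whose graded pieces are the shifted line bundles $E_{i}$. Concretely one can work with the $\mathbb{P}^{1}$-bundle structure $\pi$: combine the pullback of the Koszul resolution of the point $\pi(x)\in\mathbb{P}^{2}$ with the fiberwise sequence $0\to\mathcal{O}_{\pi}(-1)\to\mathcal{O}_{\pi}\to\mathcal{O}_{x}\to0$, and rewrite the resulting terms in the generators of $\mathcal{C}$ using the relative Euler sequence on $X=\mathbb{P}(\mathcal{T}_{\mathbb{P}^{2}})$ and the symmetry of the diagram (\ref{symm}). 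Once every graded piece is shown to carry exactly the shift prescribed in the definition of $\mathcal{C}$, it follows that $\mathcal{O}_{x}\in\mathcal{C}$. I expect this step to be the main obstacle: the collection defining $\mathcal{C}$ is not the one naively adapted to $\pi$, so the terms produced by the base Koszul complex must genuinely be mutated into the $E_{i}$ rather than read off directly.

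The heart of the second assertion is an analysis of the simple top of $\mathcal{O}_{x}$, for which the only inputs are two $\dR\Hom$ computations. For any line bundle one has $\dR\Hom(\mathcal{O}(a,b),\mathcal{O}_{x})=\mathbb{C}$ in degree $0$; dually, Serre duality on the threefold $X$ together with $\mathcal{O}_{x}\otimes\omega_{X}\cong\mathcal{O}_{x}$ gives $\dR\Hom(\mathcal{O}_{x},\mathcal{O}(a,b))=\mathbb{C}$ in degree $3$. Shifting, $\dR\Hom(\mathcal{O}_{x},E_{i})=\mathbb{C}[\,n_{i}-3\,]$, so $\Hom_{\mathcal{C}}(\mathcal{O}_{x},E_{i})=\Ext^{0}(\mathcal{O}_{x},E_{i})$ is nonzero precisely for $i=1$; hence $E_{1}$ is the unique simple quotient (top) of $\mathcal{O}_{x}$. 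Moreover, since the Gram matrix $\chi(E_{j},E_{i})$ is unitriangular for the ordered exceptional collection, $\chi(\mathcal{O}_{x},E_{1})=[\mathcal{O}_{x}:E_{1}]$, and the degree count just made yields $\chi(\mathcal{O}_{x},E_{1})=1$, so $E_{1}$ occurs exactly once as a composition factor of $\mathcal{O}_{x}$.

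With these facts the inequality is immediate. Let $C\subsetneq\mathcal{O}_{x}$ be nonzero and proper and set $Q:=\mathcal{O}_{x}/C\neq0$. Every nonzero quotient of $\mathcal{O}_{x}$ has $E_{1}$ among its composition factors (its simple top is a nonzero sum of copies of $E_{1}$, the unique simple quotient of $\mathcal{O}_{x}$), so $[Q:E_{1}]\geq1$; since $[\mathcal{O}_{x}:E_{1}]=1$ and multiplicities are additive in short exact sequences, $[C:E_{1}]=0$. Now the quadrant positions of $Z_{\alpha}(E_{i})$ established in the proof of Lemma \ref{charge} (note that $\Im Z_{\alpha,0,s}$ does not depend on $s$) show that $\Im Z_{\alpha}(E_{1})<0$ while $\Im Z_{\alpha}(E_{i})>0$ for all $i\neq1$. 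Therefore
\[
\Im Z_{\alpha}(C)=\sum_{i}[C:E_{i}]\,\Im Z_{\alpha}(E_{i})=\sum_{i\neq1}[C:E_{i}]\,\Im Z_{\alpha}(E_{i})\geq0,
\]
with equality only when $[C:E_{i}]=0$ for every $i$, i.e. $C=0$. As $C\neq0$, we conclude $\Im Z_{\alpha}(C)>0$, which is the required statement.
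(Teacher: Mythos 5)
Your proof of the second assertion is correct but takes a genuinely different route from the paper. The paper works inside the quiver description: it splits $\mathcal{C}$ into the two sub-hearts $\mathcal{C}_{1}=\pi^{*}\left\langle \mathcal{O}_{\mathbb{P}^2}(-1)[2],\mathcal{O}_{\mathbb{P}^2}[1],\mathcal{O}_{\mathbb{P}^2}(1)\right\rangle_{ex}$ and $\mathcal{C}_{2}=\pi^{*}\left\langle \mathcal{O}_{\mathbb{P}^2}[2],\mathcal{O}_{\mathbb{P}^2}(1)[1],\mathcal{O}_{\mathbb{P}^2}(2)\right\rangle_{ex}\otimes\mathcal{O}_{\pi}(-1)[1]$, each equivalent to representations of the Beilinson quiver, records that $\mathcal{O}_{x}$ has dimension vector $(1,2,1,1,2,1)$, and then shows by a diagram chase --- using that $\mathcal{O}_{L_{y}}(-1)[1]$ is generated at the vertex carrying $E_{1}=\mathcal{O}(-1,-1)[3]$ and that $\Hom(\mathcal{O}_{x},\mathcal{C}_{1})=0$ --- that any subobject containing the unique copy of $E_{1}$ is all of $\mathcal{O}_{x}$. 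You replace this with the computations $\dR\Hom(\mathcal{O}_{x},E_{i})=\mathbb{C}[n_{i}-3]$ and $\chi(\mathcal{O}_{x},E_{1})=[\mathcal{O}_{x}:E_{1}]=1$, concluding that $E_{1}$ is the unique simple quotient and occurs exactly once, so every proper subobject avoids $E_{1}$. Both proofs reduce to the same numerical fact, but yours is shorter, avoids the quiver bookkeeping, and uses nothing beyond Serre duality, the unitriangularity of the Euler form, and the strict signs of $\Im Z_{\alpha}$ on the generators established in the proof of Lemma \ref{charge}; I see no gap in it.

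The only incomplete step is the membership $\mathcal{O}_{x}\in\mathcal{C}$, which you sketch and flag as ``the main obstacle.'' It is in fact the easy step, and your worry about having to mutate is unfounded: the collection (\ref{ext excep}) is already the one adapted to $\pi$, being the union of the Beilinson collections $\pi^{*}(\mathcal{O}(-1),\mathcal{O},\mathcal{O}(1))$ and $\pi^{*}(\mathcal{O},\mathcal{O}(1),\mathcal{O}(2))\otimes\mathcal{O}_{\pi}(-1)$ with shifts making it Ext-exceptional. Setting $y=\pi(x)$ and $L_{y}=\pi^{-1}(y)$, the resolution $0\to\mathcal{O}_{L_{y}}(-1)\to\mathcal{O}_{L_{y}}\to\mathcal{O}_{x}\to0$ \emph{on the fiber} (your displayed ``fiberwise sequence'' should be read on $L_{y}$, not on $X$) gives the triangle $\mathcal{O}_{L_{y}}\to\mathcal{O}_{x}\to\mathcal{O}_{L_{y}}(-1)[1]$, and the twisted Koszul resolutions of $\mathcal{O}_{y}$ on $\mathbb{P}^{2}$, pulled back along $\pi$, place $\mathcal{O}_{L_{y}}$ in $\mathcal{C}_{1}$ and $\mathcal{O}_{L_{y}}(-1)[1]$ in $\mathcal{C}_{2}$ with exactly the prescribed shifts. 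This is precisely what the paper does; once you carry it out, your argument is complete.
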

\begin{proof}
Consider the subcategories 
\begin{align*}
&\mathcal{C}_{1}:=\pi^*\left\langle
\mathcal{O}_{\mathbb{P}^2}(-1)[2], 
\mathcal{O}_{\mathbb{P}^2}[1], 
\mathcal{O}_{\mathbb{P}^2}(1)
\right\rangle_{ex}, \\
&\mathcal{C}_{2}:=\pi^*\left\langle
\mathcal{O}_{\mathbb{P}^2}[2], 
\mathcal{O}_{\mathbb{P}^2}(1)[1], 
\mathcal{O}_{\mathbb{P}^2}(2)
\right\rangle_{ex}
\otimes \mathcal{O}_{\pi}(-1)[1]
\end{align*}
of $\mathcal{C}$. 
Both of these subcategories 
$\mathcal{C}_{i}$ are equivalent 
to the category $\rep(Q, I)$ 
of $Q$-representations with 
certain relations $I$. 
Here $Q$ is the following quiver: 
\begin{equation} \label{eq:quiverP2}
\xymatrix{
&0 \ar@/^/[r] \ar[r] \ar@/_/[r] 
&1 \ar@/^/[r] \ar[r] \ar@/_/[r] 
&2
}
\end{equation}

Let $y:=\pi(x)$ 
and denote $L_{y}:=\pi^{-1}(y) \cong \mathbb{P}^1$. 
Then we have the following exact triangle 
in $D^b(X)$ 
\[
\mathcal{O}_{L_{y}} \to \mathcal{O}_{x} \to \mathcal{O}_{L_{y}}(-1)[1]
\]
with $\mathcal{O}_{L_{y}} \in \mathcal{C}_{1}$ 
and $\mathcal{O}_{L_{y}}(-1)[1] \in \mathcal{C}_{2}$. 
This proves that $\mathcal{O}_{x} \in \mathcal{C}$. 
By Lemma \ref{lem:repP2} below, 
the $Q$-representations corresponding to 
$\mathcal{O}_{L_{y}} \in \mathcal{C}_{1}$ 
and 
$\mathcal{O}_{L_{y}}(-1)[1] \in \mathcal{C}_{2}$ 
are the same representation, 
which has dimension vector $(1, 2, 1)$ 
and is generated by the vertex $0$. 
We say that $\mathcal{O}_{x}$ 
has dimension vector $(1, 2, 1, 1, 2, 1)$. 

To prove the second statement, 
recall that for an object $M$ in (\ref{ext excep}), 
we have 
$\Im Z_{\alpha, 0, \frac{1}{18}}(M)<0$ 
if and only if 
$M=\mathcal{O}(-1, -1)[3]
=\pi^*\mathcal{O}_{\mathbb{P}^2} \otimes \mathcal{O}_{\pi}(-1)[3]$. 
Hence it is enough to consider 
a subobject $C \subset \mathcal{O}_{x}$ 
with dimension vector 
$(1, a, b, c, d, e)$. 
We must prove that $C=\mathcal{O}_{x}$ 
for such a subobject $C$.  
There exists an exact sequence 
\[
0 \to T_{1} \to C \to T_{2} \to 0
\]
in $\mathcal{C}$ 
with some objects $T_{i} \in \mathcal{C}_{i}$. 
Using the definition of the Ext-exceptional collection, 
we can see that 
$T_{1} \subset \mathcal{O}_{L_{y}}$ 
(resp. $T_{2} \subset \mathcal{O}_{L_{y}}(-1)[1]$). 
Since we have assumed that the dimension vector of $C$ 
is $(1, a, b, c, d, e)$, 
and since $\mathcal{O}_{L_{y}}(-1)[1]$ is 
generated by vertex $0$ as a quiver representation, 
we must have $T_{2}=\mathcal{O}_{L_{y}}(-1)[1]$. 
Now we get the commutative diagram 
\[
\xymatrix{
& &0 \ar[d] &0 \ar[d] & & \\
&0 \ar[r] &T_{1} \ar[d] \ar[r] &C \ar[d] \ar[r] &T_{2} \ar@{=}[d] \ar[r] &0 \\
&0 \ar[r] &\mathcal{O}_{L_{y}} \ar[d] \ar[r] 
    &\mathcal{O}_{x} \ar[d] \ar[r] &\mathcal{O}_{L_{y}}(-1)[1] \ar[r] &0 \\
& &K \ar[d] \ar@{=}[r] &K \ar[d] & & \\
& &0 &0 & &
}
\]
for some $K \in \mathcal{C}_{1}$. 
However, since 
$\Hom(\mathcal{O}_{x}, \mathcal{C}_{1})=0$, 
we must have $K=0$, 
i.e., $C=\mathcal{O}_{x}$ 
as required. 
\end{proof}

We have used the following lemma, 
which seems to be well-known: 
\begin{lem} \label{lem:repP2}
For a given integer $i \in \mathbb{Z}$, 
let 
\[
\mathcal{D}_i:=\left\langle 
\mathcal{O}_{\mathbb{P}^2}(i-2)[2], 
\mathcal{O}_{\mathbb{P}^2}(i-1)[1], 
\mathcal{O}_{\mathbb{P}^2}(i) 
\right\rangle_{ex}
\] 
be the heart of a bounded t-structure 
on $D^b(\mathbb{P}^2)$ 
generated by the Ext-exceptional collection. 
The following statements hold: 
\begin{enumerate}
\item We have an equivalence 
$\mathcal{D}_i \cong \rep(Q, I)$ 
of abelian categories, 
where $Q$ is the quiver given in (\ref{eq:quiverP2}) 
and $I$ is certain relations. 

\item For every point $y \in \mathbb{P}^2$, 
the structure sheaf 
$\mathcal{O}_y \in \mathcal{D}_i$ 
has dimension vector $(1, 2, 1)$, 
and is generated by the vertex $0$. 
\end{enumerate}
\end{lem}
\begin{proof}
The first assertion is well-known, 
see \cite{bei78, bon89}. 
Let us prove the second assertion. 
Since we have 
$\mathcal{O}_y \otimes \mathcal{O}_{\mathbb{P}^2}(1) 
\cong \mathcal{O}_y$, 
we may assume $i=0$. 
Note that the objects 
$\mathcal{O}_{\mathbb{P}^2}(-2)[2], 
\mathcal{O}_{\mathbb{P}^2}(-1)[1], 
\mathcal{O}_{\mathbb{P}^2} \in \mathcal{D}_0$ 
correspond to the simple $(Q, I)$-representations 
of dimension vectors 
$(1, 0, 0), (0, 1, 0), (0, 0, 1)$, respectively.  
Denote by $l \subset \mathbb{P}^2$ a line. 
We have the following exact triangles 
\[
\mathcal{O}_l \to \mathcal{O}_y \to \mathcal{O}_l(-1)[1], \quad 
\mathcal{O}_{\mathbb{P}^2}(-j)[j] \to \mathcal{O}_l(-j)[j] 
\to \mathcal{O}_{\mathbb{P}^2}(-j-1)[j+1] 
\]
for $j=0, 1$, 
and hence $\mathcal{O}_y \in \mathcal{D}_0$ 
has dimension vector $(1, 2, 1)$. 

Let us consider a subobject $S \subset \mathcal{O}_y$ 
in the category $\mathcal{D}_0$ with dimension vector 
$(1, s, t)$. 
Since $\Hom(\mathcal{O}_{\mathbb{P}^2}(-j)[j], \mathcal{O}_y)=0$ 
for $j=1, 2$, we must have $t \neq 0$. 
Then the quotient $\mathcal{O}_y/S$ 
has dimension vector $(0, 2-s, 0)$. 
On the other hand, we also have the vanishing 
$\Hom(\mathcal{O}_y, \mathcal{O}_{\mathbb{P}^2}(-1)[1])=0$, 
and hence we must have $\mathcal{O}_y/S=0$, i.e., 
$T=\mathcal{O}_y$ as required. 
\end{proof}

Now we can prove 
Proposition \ref{goal}. 

\begin{proof}[Proof of Proposition \ref{goal}]
By Lemma \ref{hearts}, Lemma \ref{charge}, 
and Lemma \ref{st of str}, 
we can apply Proposition \ref{aux} 
to get the result. 
\end{proof}



\end{document}